\numberwithin{equation}{section}
\newcommand{\R}[0]{\mathbb{R}}
\newcommand{\Z}[0]{\mathbb{Z}}
\newcommand{\N}[0]{\mathbb{N}}
\newcommand{\sF}[0]{\mathcal{F}}
\newcommand{\sA}[0]{\mathcal{A}}
\newcommand{\sB}[0]{\mathcal{B}}
\newcommand{\sC}[0]{\mathcal{C}}
\newcommand{\sS}[0]{\mathcal{S}}
\newcommand{\sT}[0]{\mathcal{T}}
\newcommand{\sL}[0]{\mathcal{L}}
\newcommand{\sO}[0]{\mathcal{O}}
\newcommand{\sR}[0]{\mathcal{R}}
\newcommand{\sP}[0]{\mathcal{P}}
\newcommand{\sW}[0]{\mathcal{W}}
\newcommand{\sX}[0]{\mathcal{X}}
\newcommand{\sI}[0]{\mathcal{I}}
\newcommand{\sV}[0]{\mathcal{V}}
\newcommand{\sZ}[0]{\mathcal{Z}}
\newcommand{\sJ}[0]{\mathcal{J}}
\newcommand{\il}[0]{\langle}
\newcommand{\ir}[0]{\rangle}
\newcommand{\dbl}{d_{\tiny{\mbox{bl}}}}
\newcommand{\eps}[0]{\varepsilon}
\newcommand{\clp}{\mathcal{P}}
\newcommand{\RR}{\mathbb{R}}
\newcommand{\NN}{\mathbb{N}}
\newcommand{\ZZ}{\mathbb{Z}}
\newtheorem{theorem}{Theorem}[section]
\newtheorem{corollary}[theorem]{Corollary}
\newtheorem{lemma}[theorem]{Lemma}
\newtheorem{prop}[theorem]{Proposition}
\newtheorem{definition}{Definition}[section]
\newtheorem{remark}{Remark}[section]
\newtheorem{condition}{Condition}[section]
\definecolor{amet}{rgb}{0.8, 0.2, 0.8}
\definecolor{ques}{rgb}{0.8, 0.2, 0.2}
\begin{document}
\title[LDP for Mean Field Systems with Vanishing Noise]{Asymptotic Behavior of Stochastic Currents under Large Deviation Scaling with Mean Field Interaction and Vanishing Noise.}
\author[A. Budhiraja]{Amarjit Budhiraja}
\address{Department of Statistics and Operations Research,
University of North Carolina,
Chapel Hill, NC 27599, United States}
\email{budhiraj@email.unc.edu}
\author[M. Conroy]{Michael Conroy}
\address{Department of Statistics and Operations Research,
University of North Carolina,
Chapel Hill, NC 27599, United States}
\email{mconroy@live.unc.edu}
\begin{abstract}
We study the large deviation behavior of a system of diffusing particles with a mean field interaction, described through a 
collection of stochastic differential equations, in which each particle is driven by a vanishing independent Brownian noise.
An important object in the description of the asymptotic behavior, as the number of particles approach infinity and the noise intensity approaches zero, is the stochastic current associated with the interacting particle system in the sense of Flandoli et al. (2005). We establish a joint large deviation principle (LDP) for the path empirical measure for the particle system and the associated stochastic currents in the simultaneous large particle and small noise limit. Our work extends recent results of Orrieri (2018), in which the diffusion coefficient is taken to be identity, to a setting of a state dependent and possibly degenerate noise with the mean field interaction influencing both the drift and diffusion coefficients, and allows for a stronger topology on the space of stochastic currents in the LDP. Proof techniques differ from Orrieri (2018) and rely on methods from stochastic control, theory of weak convergence, and representation formulas for Laplace functionals of Brownian motions.
 \\

\noindent{\em Keywords}:  Large deviation principle, Weakly interacting diffusions, Stochastic currents,
Mean-field interaction, Freidlin-Wentzell small noise asymptotics,  Small-noise asymptotics of McKean-Vlasov equations, Controlled nonlinear Markov processes, Mean-field stochastic control problems, pathwise realizations. \\

\noindent{\em MSC(2010):} 60F10, 60K35, 60B10, 60H05, 60H10, 93E20. \\

\noindent{\em Acknowledgements:} Research supported in part by the National Science Foundation (DMS-1814894 and DMS-1853968). AB is grateful for the support from Nelder Fellowship from Imperial College, London, where part of this research was completed.

\end{abstract}

\maketitle

\section{Introduction}

Consider the interacting particle system described through a collection of stochastic differential equations (SDEs) on $\R^d$ given as
\begin{equation}\label{eq:eq435r}
	dX_j^N(t) = b\left(X_j^N(t), V^N(t)\right)\,dt + \eps_N \sigma\left(X_j^N(t), V^N(t)\right)\,dW_j(t), \qquad 1\le j \le N,\qquad N\in\N, 
\end{equation}
on some finite time horizon $0\le t \le T$, where $\eps_N \downarrow 0$ as $N \to \infty$ and $\{W_j, j \in \N\}$ are independent $m$-dimentional Brownian motions on $[0,T]$. 
Here $V^N(t)$ is the empirical measure of the particle states at time $t$, namely
\[
	V^N(t) = \frac{1}{N}\sum_{j=1}^N \delta_{X_j^N(t)}, \qquad 0\le t \le T,
\]
and thus the interaction among the particles is of the mean-field type and influences both the drift and
diffusion coefficients of each particle. 
The law of large numbers (LLN) and fluctuation results for such mean-field systems have been widely studied, see for instance \cite{BraHep, Daw, KurXio, McK, Mel, Oel, ShiTan}. In particular, when $N\to \infty$, under conditions on the coefficients and the initial data, $\{V^N(t), 0 \le t \le T\}$ converges to the solution of the Vlasov equation
 \begin{equation*}
	\frac{\partial}{\partial t} V + \nabla \cdot b(\cdot, V)V = 0,
\end{equation*}
which can be formally written as
 \begin{equation}\label{eq:eqvj}
	\frac{\partial}{\partial t} V + \nabla \cdot \sJ = 0,
\end{equation}
where $\sJ \doteq   b(\cdot, V)V$ is the nonlinear current given as the limit of the {\em stochastic currents} 
\begin{equation}\label{eq:eqstcu}
	J^N(\varphi) = \frac{1}{N}\sum_{j=1}^N \int_0^T \varphi\left(t, X_j^N(t)\right) \circ dX_j^N(t),\end{equation}
defined for arbitrary smooth and compactly supported $\varphi: (0,T)\times \RR^d \to \RR^d$, where $\circ$ denotes the
Stratonovich integral.
Currents and their stochastic counterparts are key objects in geometric measure theory and play an important role in the theory of rough paths (cf. \cite{FlaTud, GMS, gub, lyo}). In the current context they  provide a convenient way to describe the asymptotics of the empirical measure process $V^N$.

In this work we are interested in studying the asymptotics of probabilities of significant deviations of the
empirical measure $V^N$, for the  $N$-particle microscopic stochastic evolution described by \eqref{eq:eq435r},
from its macroscopic hydrodynamic limit described by the first order Vlasov equation in \eqref{eq:eqvj}. A common approach to such a study is by establishing a general large deviation principle (LDP) on an appropriate abstract space from which the information on probabilities of deviations for specific events involving the $N$-particle system \eqref{eq:eq435r} can be obtained by a suitable application of the contraction principle. In view of the representation of the hydrodynamic limit of $V^N$ in terms of the nonlinear current functional $\sJ$, a natural candidate for an LDP are the pairs 
$(V^N, J^N)$ regarded as random elements of an appropriate space. 
Under the conditions on the coefficients considered in this work (see Condition \ref{maincondition}), $V^N$
will take values in $\sV  \doteq \sC([0,T], \sP_1(\R^d))$, namely, the space of continuous functions from $[0,T]$ to the space $\sP_1(\RR^d)$ of  probability measures on $\RR^d$ with finite first moment, equipped with the Wasserstein-1 distance (see Section \ref{sec:prelim} for precise definitions). The identification of an appropriate space for $J^N$ requires a bit more work (cf. \cite{flagubgiator, Orr}). In particular, note that \eqref{eq:eqstcu} describes an uncountably infinite collection of identities in which the right side is defined in an almost sure sense for each fixed $\varphi$. Thus a basic problem is to provide a pathwise representation for the collection 
\begin{equation}\label{eq:eq600r}
	\left\{\varphi \mapsto \frac{1}{N}\sum_{j=1}^N \int_0^T \varphi\left(t, X_j^N(t)\right) \circ dX_j^N(t)\right\}, \end{equation}
which defines a continuous, linear map on a suitable function space. This problem was studied in \cite{flagubgiator} (see also \cite{Orr}) where it was shown that 
there is a random variable $\sJ^N$ with values in a certain negative Sobolev space $\mathbf{H}^{-\mathbf{s}}$ of distributions (see Section \ref{sec:stochcurrents}), which gives a pathwise representation for the collection in \eqref{eq:eq600r} in the sense that 
$$\il \sJ^N, \varphi \ir = \frac{1}{N}\sum_{j=1}^N \int_0^T \varphi\left(t, X_j^N(t)\right) \circ dX_j^N(t)\qquad\mbox{ a.s.,}$$
for every smooth $\varphi$ with compact support. Thus the stochastic currents $\sJ^N$ can be viewed as random elements of  the Hilbert space $\mathbf{H}^{-\mathbf{s}}$, and the basic problem of interest is then to establish a large deviation principle for $(V^N, \sJ^N)$ in $\sV \times \mathbf{H}^{-\mathbf{s}}$.

This large deviation problem in the setting where $m=d$ and $\sigma = \mbox{Id}$
was studied in \cite{Orr} by direct change of measure arguments.  Specifically, \cite{Orr} treats the large deviation upper bound by first establishing an estimate for compact sets by considering an explicit tilt of the measure and then extends the estimate to all closed sets by establishing certain exponential tightness estimates. The lower bound is proved by exploiting connections between large deviations and $\Gamma$-convergence from \cite{mar}, in particular the key idea is to construct a suitable `recovery sequence' using results from \cite{FLOS}. One important aspect of the results and proof methods in \cite{Orr}
is that the LDP is established with the weak topology on the Hilbert space $\mathbf{H}^{-\mathbf{s}}$. Indeed, both the proofs of the upper and lower bounds rely on the use of the weak topology in important ways, e.g. since bounded sets are relatively compact under the weak topology in $\mathbf{H}^{-\mathbf{s}}$, in proving exponential tightness it suffices to estimate the probability that $\sJ^N$ takes values in the complement of a bounded ball.

In the current work we take a different approach to the study of the large deviation principle that is based on methods from stochastic control, the theory of weak convergence of probability measures, and Laplace asymptotics. This approach allows us to avoid establishing exponential tightness estimates of the form in \cite{Orr} and enables us to treat diffusion coefficients that are state dependent and possibly degenerate (see Section \ref{sec:maincond}). In addition,  since in this approach one needs to establish ordinary tightness rather than exponential tightness,  by appealing to certain compact embedding results for Sobolev spaces, we are able to establish an LDP with the norm topology on $\mathbf{H}^{-\mathbf{s}}$ instead of the weak topology considered in \cite{Orr}.  
In fact, we establish a somewhat more general large deviation principle than the one considered in \cite{Orr} from which the LDP for $(V^N, \sJ^N)$ can be deduced by the contraction principle. Specifically, we consider path empirical measures $\mu^N$ associated with the interacting particle system in \eqref{eq:eq435r} defined as 
\begin{equation*}
	\mu^N = \frac{1}{N}\sum_{j=1}^N \delta_{X_j^N}.
\end{equation*}
Under the conditions of this work it follows that $\mu^N$ is a random variable with values in
$\sP_1(\sC([0,T], \RR^d))$, namely the space of probability measures, on the Banach space of $\RR^d$-valued continuous trajectories on $[0,T]$, with integrable norm (equipped with the Wasserstein-1 metric). Our main result, Theorem \ref{main}, gives an LDP for $(\mu^N, \sJ^N)$ in $\sP_1(\sC([0,T], \RR^d)) \times \mathbf{H}^{-\mathbf{s}}$. Using the continuity of the map $ \nu \mapsto  \{ t\mapsto \nu \circ \pi_t^{-1}\}$  from
$\sP_1(\sC([0,T], \RR^d))$ into $\sV$, where $\pi_t$ is the projection map on $\sC([0,T], \RR^d)$ giving the evaluation at time $t$, we then deduce an LDP for the sequence $(V^N, \sJ^N)$ in $\sV\times \mathbf{H}^{-\mathbf{s}}$ in
Corollary \ref{cor:conpri}.
The rate function, in the general setting of a state dependent diffusion coefficient, is given as a value function of a certain deterministic mean field control problem with a quadratic cost (see \eqref{eq:RateFunction} and \eqref{eq:ratefncon}). In Proposition \ref{lem:sigeqi} we show that in the 
  special case where $\sigma = \mbox{Id}$, this representation of the rate function simplifies to a more explicit form given in terms of certain controlled Vlasov equations (see \eqref{eq:eq11004th}) which was obtained in \cite{Orr}.
  
As noted previously, proof techniques here are quite different from \cite{Orr}. The starting point of our analysis is a certain variational representation for exponential functionals of finite dimensional Brownian motions (see \cite{BouDup, BudDup}), using which the proof of the large deviation principle reduces to a 
study of tightness and convergence properties of certain controls and controlled analogues of the state processes $\{X^N_j, 1 \le j \le N\}$, state empirical measures $V^N$, path occupation measures $\mu^N$, and stochastic currents $\sJ^N$,  denoted as $\{\bar X^N_j, 1 \le j \le N\}$,  $\bar V^N$, $\bar\mu^N$, and $\bar\sJ^N$, respectively. For the upper bound proof we introduce certain joint empirical measures, denoted as $Q^N$ (see \eqref{eq:occupation}), of particle trajectories and associated control processes. The main step in the proof of the upper bound is to establish the tightness of the sequence $\{(\bar \mu^N, Q^N, \bar \sJ^N), N \in \N\}$ and to provide a suitable characterization of the weak limit points of this sequence. In particular, the tightness of the controlled stochastic currents $\{\bar \sJ^N\}$ is established with the norm topology on $\mathbf{H}^{-\mathbf{s}}$ and relies on approximations of $\{\bar\sJ^N\}$ by distributions with compact support as well as certain compact embedding results for Sobolev spaces (see Lemma \ref{compactembedding}).  The lower bound proof is constructive in that, given
a near optimal measure $\mu$ on $\sC([0,T], \RR^d)$ and a near optimal current $\sJ$ in a certain variational problem associated with the rate function, we construct a sequence of controls and controlled variables $(\bar \mu^N,  \bar \sJ^N)$ that converge to $(\mu, \sJ)$ in a suitable manner. The key ingredients in the proof here are a weak uniqueness (i.e. uniqueness in probability laws) property of certain equations associated with the controlled versions of the Vlasov equation \eqref{eq:eqvj} (see Lemma \ref{lem:weakuniq}) and certain infinite product space constructions.

Large deviation principles for weakly interacting diffusions as in \eqref{eq:eq435r} with non-vanishing noise (i.e. $\eps_N=1$) have been studied in \cite{DawGar}.   A different approach, based on weak convergence methods of the form used in the current work, was taken in \cite{BudDupFis}. The latter paper, in contrast to \cite{DawGar}, allowed for degenerate diffusion coefficients and for a mean field interaction in the diffusion coefficient. There have also been several works (in addition to the paper \cite{Orr} discussed above) that have studied large deviation problems for weakly interacting diffusions with small noise.
In particular,  see \cite{HIP}, \cite{RST}, and references therein, for large deviations results for McKean-Vlasov equations in the small noise limit; and see \cite{HerTug}  for an analysis of  interchanging of mean-field limit with the small noise limit at the level of rate function convergence. 
In a related direction, the paper \cite{BudCon} studied large deviation properties of a system of interacting diffusions in which each particle is driven by an independent individual source of noise and also by a vanishing amount of noise that is common to all particles. Different levels of intensity of the small common noise lead to different types of large deviation behavior, and the paper  \cite{BudCon} provided precise characterization of the various regimes.

\subsection{Organization.} The paper is organized as follows. In Section \ref{sec:prelim}, we specify our model, describe the space on 
which the large deviation principle will hold, define the rate function, and present our main large deviation result. 
Section \ref{sec:laplace} provides the proof of this result, with the proofs of its key lemmas given in Section \ref{sec:keylemmas}. 
The proofs of some auxiliary results  are given in the Appendix.

\subsection{Notation.}
\label{sec:notat}
The following notation will be used throughout. We use $\sC(R, S)$, $\sC_c(R, S)$, and $\sC^k(R, S)$, $k\in \N\cup\{\infty\}$, to 
denote the spaces of continuous, continuous and compactly supported, and $k$-times continuously differentiable functions 
from $R$ into $S$, respectively. Also, $\sC_c^k(R, S) = \sC_c(R, S) \cap \sC^k(R, S)$ for $k \in \N\cup\{\infty\}$. We denote by $L^2(\mu, R, S)$ the space of $\mu$-square integrable functions from 
$R$ into $S$. When $\mu$ is the Lebesgue measure, we will occasionally suppress it in the notation
and write $L^2(\mu, R, S)$ as $L^2(R, S)$.
The evaluation of a distribution $F$ on a test function $\varphi$ will be denoted by 
$\il F, \varphi\ir$, and integration of a function $f$ with 
respect to a measure $\mu$ will be denoted by $\il \mu , f \ir$. 
$\sB(S)$ denotes the collection of all Borel sets on $S$. For a Polish space $(S,d_S)$, $\sP(S)$ denotes the space of probability measures on $S$, endowed with the topology of weak convergence. A convenient metric on this space is the bounded Lipschitz metric given as
\begin{align*}
\dbl(\mu, \nu) &\doteq \sup_{f\in \sL_{\text{b}}(S)}  \left| \il\mu, f \ir - \il \nu, f\ir \right|,  \qquad \mu, \nu \in \sP(S),\qquad \mbox{where}\\
\sL_{\tiny\mbox{b}}(S) &\doteq \left\{ f \in \sC(S, \R) : \sup_{x\ne y} \frac{|f(x) - f(y)|}{d_S(x,y)} \le 1,  \; \sup_{x}|f(x)| \le 1\right\}.
\end{align*}
When $\theta \in \sP(S)$, the notation $E_\theta$ will be used to denote expectation on the probability space $(S, \sB(S), \theta)$.
For two spaces $S_1$ and $S_2$ and $\theta \in \sP(S_1\times S_2)$, $\theta_{(1)}$ and $\theta_{(2)}$
will denote the marginal distributions on $S_1$ and $S_2$, respectively. Similar notation will be used when more than two spaces are involved.
Euclidean norms 
will be denoted by $|\cdot|$. 
For a Polish space $(S,d_S)$, the space $\sC([0,T], S)$ will be equipped with the metric
$$d(x,y) = \sup_{0\le t \le T} d_S(x(t), y(t)), $$
under which it is a Polish space as well.
On $\sC([0,T], \R^d)$, we define the norm
$\|x\|_{\infty} \doteq \sup_{0\le t \le T} |x(t)|$, and the metric above becomes $d(x,y) = \|x-y\|_\infty$. 
We will use $\Rightarrow$ to denote convergence in distribution, and $\overset{P}{\to}$ to denote convergence in $P$-probability.
Infimum over an empty set, by convention, is taken to be $+\infty$. For a metric space $S$, a function $I: S\to [0,\infty]$ is 
called a rate function if $\{x\in S: I(x)\le l\}$ is a compact set for every $l <\infty$.

\section{Preliminaries and Main Result}\label{sec:prelim}

Let $(\Omega, \sF, P, \{\sF(t), 0\le t\le T\})$ be a filtered probability space where the filtration satisfies the {\em usual conditions} (see \cite[Definition 21.22]{Kle}). Fix $m \in \N$, and let $\{W_j, j \in \N\}$ be a sequence of independent $m$-dimensional $\{\sF(t)\}$-Brownian motions on the time horizon $0\le t\le T$. For each $N \in \N$, we consider the following system of stochastic differential equations in $\R^d$:
\begin{equation}\label{eq:model}
	X_j^N(t) = X_j^N(0) + \int_0^tb\left(X_j^N(s), V^N(s) \right)\,ds + \eps_N \int_0^t\sigma\left(X_j^N(s), V^N(s) \right)\,dW_j(s), \qquad 1\le j \le N, 
\end{equation}
where $V^N(t)$ denotes the $\sP(\R^d)$-valued empirical measure 
\begin{equation}\label{eq:EmpMzrV}
	V^N(t) \doteq \frac{1}{N}\sum_{j=1}^N \delta_{X^N_j(t)}, \qquad 0\le t \le T, 
\end{equation}
and $\{\eps_N, N \in \N\}$ is some sequence in $\R_+$ such that $\eps_N \downarrow 0$ as $N \to \infty$. Without loss of generality, we will assume that $\sup_N \eps_N \le 1$ throughout. 
Denote $\sX \doteq \sC([0,T], \R^d)$, and
define $\sP(\sX)$-valued random variables, given as the empirical measure of $(X_1^N, \ldots, X_N^N)$, by 
\begin{equation}\label{eq:EmpMzrmu}
	\mu^N \doteq \frac{1}{N}\sum_{j=1}^N \delta_{X_j^N}. 
\end{equation}
Note that the marginal of $\mu^N$ at time $t$ is $V^N(t)$, that is, defining $\pi_t: \sC([0,T], \R^d) \to \R^d$  as the projection map $\pi_t(x) = x(t)$, we have
\[
	\mu^N \circ \pi_t^{-1} = V^N(t), \qquad 0\le t \le T.
\]
We will view each $\mu^N$ as a random variable taking values in the Wasserstein-1 space which is defined as follows. 
For a Polish  space $(S, d_S)$, define the space $\sP_1(S)$ by 
\[
	\sP_1(S) \doteq \left\{ \mu \in \sP(S) : \int_{S} d_S(x,x_0)\,\mu(dx) < \infty \right\}, 
\]
for some choice of $x_0 \in S$ (the space does not depend on the choice of $x_0$). 
Then $\sP_1(S)$ is a Polish space under the Wassertstein-1 distance given by 
\begin{equation}\label{eq:eq1256r}
	d_{1}(\mu, \nu) \doteq \sup_{f\in \sL(S)}  \left| \il \mu, f \ir - \il \nu, f \ir \right|, \qquad \sL(S) \doteq \left\{ f \in \sC(S, \R) : \sup_{x\ne y} \frac{|f(x) - f(y)|}{d_S(x,y)} \le 1 \right\}. 
\end{equation}
For further details on Wassertstein spaces, we refer to \cite{Vil}. 
The particular cases of interest here are the spaces $\sP_1(\R^d)$ and $\sP_1(\sX)$, and the notation $d_1$ will be used for the metric on both spaces, with the distinction being clear
 from context. 
Noting that (under Condition \ref{maincondition} given below)
\begin{align*}
	\int_{\sX} d_{\sX}(x,0)\, \mu^N(dx) = \int_{\sX} \|x\|_{\infty}\, \mu^N(dx) = \frac{1}{N}\sum_{j=1}^N \left\|X^N_j\right\|_{\infty} <\infty \qquad \mbox{a.s.,}
\end{align*}
we see that indeed $\mu^N$ is a $\sP_1(\sX)$-valued random variable. Similarly, it can be checked that
$V^N$ is a $\sC([0,T], \sP_1(\RR^d))$-valued random variable. Throughout, we will denote $\sV \doteq \sC([0,T], \sP_1(\R^d))$.

\subsection{Main Conditions.}\label{sec:maincond}
The following is our main assumption on the coefficients. 
\begin{condition}\label{maincondition} There is some $L < \infty$ such that for all $x, y \in \R^d$ and $\mu, \nu \in \sP_1(\R^d)$, 
\[
	|b(x, \mu) - b(y, \nu)| + |\sigma(x, \mu) - \sigma(y, \nu)| \le L \left( |x-y| + d_{1}(\mu, \nu)\right),
\]
and $| \sigma(x, \mu)| \le L$. 
\end{condition}
Note that the above condition implies in particular that
 for all $x \in \R^d$ and $\mu \in \sP_1(\R^d)$, 
\begin{equation}\label{eq:eq208}
	|b(x, \mu)| \le L \left(1 + |x| + \int_{\R^d} |y|\,\mu(dy) \right). 
\end{equation}
with possibly a larger choice of $L$ than in Condition \ref{maincondition}.
By standard arguments, Condition~\ref{maincondition} implies that there exists a unique pathwise solution to \eqref{eq:model} for each $N \in \N$. 

\begin{remark}
	The boundedness of $\sigma$ is used in an important way at several places in the proof. It is a key ingredient in the proof of Lemma \ref{L2bound} which in turn is key to Lemmas \ref{lem:tightjn} and \ref{keytightness}. The last two lemmas are used in both the upper and lower bound proofs. For the upper bound proof one can relax the assumption on boundedness of $\sigma$ by using localization arguments of the form used in \cite{BudDup} (see e.g. \cite[Theorem 8.4]{BudDup_book}), however these localization arguments do not work in a simple manner for the proof of the lower bound.  Relaxing the condition on the boundedness of $\sigma$ remains an interesting open problem.
\end{remark}

We assume the following on the initial conditions of \eqref{eq:model}. 
\begin{condition}\label{initialcondition} For each $N \in \N$ and $1\le j \le N$, $X_j^N(0) = x_j^N \in \R^d$ is deterministic. The collection of initial conditions satisfies the following. 
\begin{enumerate}[(i)]
\item There exists some $\mu_0 \in \sP(\R^d)$ such that, $\dbl\left(V^N(0), \mu_0\right) \to 0$.

\item $\displaystyle \sup_{N \ge 1} \frac{1}{N} \sum_{j=1}^N \left| x_j^N \right|^2 < \infty$. 
\end{enumerate}
\end{condition}
Note that (i) and (ii) above imply that $\int_{\R^d} |x|^2 \,\mu_0(dx) < \infty$ from the observation 
\[
	\int_{\R^d} \left(|x|^2\wedge K\right)\,\mu_0(dx) = \lim_{N \to \infty} 
	\frac{1}{N}\sum_{j=1}^N \left(\left|x_j^N\right|^2\wedge K\right) \le \sup_{N \ge 1} \frac{1}{N} \sum_{j=1}^N \left| x_j^N \right|^2
\] 
for any $K \in (0, \infty)$, and applying Fatou's lemma. 
The above condition  also gives that,
as  $N \to \infty$,
\[
	d_1\left(V^N(0), \mu_0 \right) \to 0. 
\]

In order to prove the Laplace lower bound, we will make a stronger assumption given below on the diffusion coefficient $\sigma$  which says that it depends on the state of the system only through the empirical measure.  We will also require the convergence of the initial data in a somewhat stronger sense.
\begin{condition}\label{sigmacondition} 
	\begin{enumerate}[(i)]
	\item For each $x \in \R^d$ and $\mu \in \sP_1(\R^d)$, $\sigma(x, \mu) = \sigma(\mu)$. 
	
	\item For all $\mu_0$-integrable $f:\R^d \to \R$, 
	\[
		\left\il V^N(0), f \right\ir \to \il \mu_0, f \ir \qquad \mbox{as}\; N \to \infty
	\]
	\end{enumerate}
\end{condition}
\begin{remark}
	Part (i) of Condition \ref{sigmacondition}  is used in the proof of the weak uniqueness result in Lemma \ref{lem:weakuniq}. Relaxing this condition is a challenging open problem. The second part of Condition \ref{sigmacondition} is used in obtaining the convergence stated in \eqref{eq:PNLambdaN}.
\end{remark}
We are interested in the large deviations behavior of $\mu^N$ and $V^N$ as well as a 
collection of random linear functionals, referred to as {\em stochastic currents}, 
associated with the sequence of processes $\{X_j^N(t)\}$. We now introduce these objects. 
For each $N$ and $\varphi \in \sC_c^\infty([0,T] \times \R^d, \R^d)$ define
\begin{equation}\label{eq:JNdef}
	J^N(\varphi) \doteq \frac{1}{N}\sum_{j=1}^N \int_0^T \varphi\left(t, X_j^N(t)\right) \circ dX_j^N(t), 
\end{equation}
where the above is a Stratanovich stochastic integral. The relationship between Stratanovich and It\^o integrals gives the following formula for $J^N(\varphi)$:
\[
	J^N(\varphi) = \frac{1}{N}\sum_{j=1}^N \left( \int_0^T \varphi\left(t, X_j^N(t)\right)\cdot dX_j^N(t) + \frac{1}{2}\left\il \varphi\left(\cdot, X_j^N(\cdot)\right), X_j^N(\cdot) \right\ir_T \right), 
\]
where $\il Y, Z \ir_t$ denotes the quadratic variation at time $t$ of two continuous semimartingales $Y$ and $Z$. 
From results in \cite{flagubgiator}, $J^N$ can be viewed as a random linear functional on a suitable Sobolev space. We now briefly describe these results and make precise the space in which these random linear functionals take values.

\subsection{Stochastic Currents}\label{sec:stochcurrents}

Recall that for $k \in \N$, $H^k(\R^d, \R^d)$ is the Hilbert space of functions $f \in L^2(\R^d, \R^d)$ such that the distributional derivatives $D^\alpha f$ are also $L^2$ functions for all $|\alpha| \le k$, where $\alpha = (\alpha_1, \ldots, \alpha_d)$ denotes a multi-index. 
More generally, for any $s \in \R_+$, $H^s(\R^d, \R^d)$ is defined as the space of functions $f \in L^2(\R^d, \R^d)$ such that
\begin{equation}\label{eq:FourierSobolevnorm}
	\|f\|^2_{s} \doteq \int_{\R^d} |\hat f(\xi)|^2 (1+ |\xi|^2)^s\,d\xi  <\infty, 
\end{equation}
where $\hat f(\xi) = \int e^{-2\pi i \xi\cdot x} f(x)\,dx$ is the Fourier transform on $\R^d$. 
We refer the reader to \cite{adafou, folland, NezPalVal} for details on these spaces.

In order to describe the linear space associated with 
the map $\varphi \mapsto J^N(\varphi)$, 
we will need to consider a suitable Sobolev space of functions of time and space. Following \cite{BerButPis, flagubgiator, Orr}, a natural choice in this regard is the space
\[
	H^{s_1}\left((0,T), H^{s_2}\left(\R^d, \R^d\right)\right), 
\]
where $\mathbf{s} = (s_1, s_2) \in \left(\frac{1}{2}, 1\right) \times \left(\frac{d}{2} + 1, \infty\right)$ (see \cite{Orr} for a precise description of the space).
However in order to apply certain compact embedding results (see e.g. the proof of Lemma \ref{keytightness}) we will consider a slight modification of these spaces defined as follows.

Fix $a,b \in \R$ such that $a<0<T<b$ and define $U\doteq(a,b)$ and $\sO_d \doteq \left(\frac{1}{2}, 1\right) \times \left(\frac{d}{2} + 1, \infty\right)$. 
Then define
\[
	\mathbf{H}^{\mathbf{s}} \doteq H^{s_1}\left(U, H^{s_2}\left(\R^d, \R^d\right)\right), \qquad \mathbf{s} \in \sO_d,
\]
as the space of functions $f:U\times \RR^d \to \RR^d$ satisfying
\begin{equation}\label{eq:HstNorm}
\begin{aligned}
	\|f\|^2_{\mathbf{s}} 
	&\doteq \|f\|^2_{L^2(U, H^{s_2}(\R^d, \R^d))} + [f]^2_{\mathbf{s}} \\
	&\doteq  \int_{U} \|f(u, \cdot)\|^2_{s_2}\,du + \int_{U} \int_{U} \frac{\|f(u, \cdot) - f(v, \cdot)\|^2_{s_2}}{|u-v|^{1+2s_1}}\,du\,dv < \infty, 
\end{aligned}
\end{equation}
where $\|\cdot\|_{s_2}$ is as in \eqref{eq:FourierSobolevnorm}. The norm $\|\cdot\|_{\mathbf{s}}$
is usually referred to as a
{\it Gagliardo norm}, and in fact corresponds to an inner product which makes $\mathbf{H}^{\mathbf{s}}$ a separable Hilbert space (see \cite[Section 3]{NezPalVal}). The topological dual of the Hilbert space $\mathbf{H}^{\mathbf{s}}$
will be denoted as $\mathbf{H}^{-\mathbf{s}}$, namely
\[
	\mathbf{H}^{-\mathbf{s}} \doteq \left(\mathbf{H}^{\mathbf{s}} \right)'.
\]
The norm on this space is given as
\[
	\|F\|_{-\mathbf{s}} \doteq \sup_{\varphi \in \sC_c^\infty (U \times \R^d, \R^d)} \frac{|\il F, \varphi \ir|}{\|\varphi\|_{\mathbf{s}}}.
\]

For  $\varphi \in \sC_c^\infty(U\times \R^d, \R^d)$,  abusing notation, we let
$$J^N(\varphi) \doteq \frac{1}{N}\sum_{j=1}^N \int_0^T \varphi\left(t, X_j^N(t)\right) \circ dX_j^N(t).$$
Note that if $\varphi_{\mbox{\tiny{res}}}$ denotes the restriction of $\varphi$ to $[0,T]\times \R^d$, then
$J^N(\varphi)= J^N(\varphi_{\mbox{\tiny{res}}})$. Also, any $\varphi \in \sC_c^\infty([0,T]\times \R^d, \R^d)$ can be extended to a $\varphi_{\mbox{\tiny{ext}}} \in \sC_c^\infty(U\times \R^d, \R^d)$ where once more
$J^N(\varphi)= J^N(\varphi_{\mbox{\tiny{ext}}})$.
By a {\it pathwise realization} of the collection $\{\varphi \mapsto J^N(\varphi)\}$ on $\sC_c^\infty([0,T]\times \R^d, \R^d)\}$, we mean a random variable $\sJ^N$ with values in $\mathbf{H}^{-\mathbf{s}}$ such that for any
$\varphi \in \sC_c^\infty([0,T]\times \R^d, \R^d)$ and any extension $\varphi_{\mbox{\tiny{ext}}} $
of $\varphi$ in $\sC_c^\infty(U\times \R^d, \R^d)$, 
$\il\sJ^N, \varphi_{\mbox{\tiny{ext}}}\ir = J^N(\varphi)$ a.s.

The following result, giving the existence of a pathwise realization, follows along the lines of \cite{Orr} . The proof is an immediate consequence of Lemma \ref {JbarReal} below (on taking $u^N_j=0$ in the lemma), the proof of which is given in the Appendix. 

\begin{theorem}\label{JNpathwiserealization}  Suppose Conditions \ref{maincondition} and \ref{initialcondition} hold.
	Then for each $N \in \N$  and $\mathbf{s} \in \sO_d$, there is an $\mathbf{H}^{-\mathbf{s}}$-valued random variable $\sJ^N$ on $(\Omega, \sF, P)$
	such that for every $\varphi \in \sC_c^\infty(U \times \R^d, \R^d)$,
	$\il\sJ^N(\omega), \varphi\ir = [J^N(\varphi)](\omega)$ for a.e. $\omega \in \Omega$. Namely, $\sJ^N$ is a pathwise realization
	of $\{\varphi \mapsto J^N(\varphi)\}$.
\end{theorem}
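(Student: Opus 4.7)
The plan is to produce $\sJ^N$ by first proving a second-moment estimate
$$E|J^N(\varphi)|^2 \le C_N \|\varphi\|_{\mathbf{s}}^2, \qquad \varphi \in \sC_c^\infty(U \times \R^d, \R^d),$$
and then invoking a Radonification-type argument, as in \cite{flagubgiator}, to realize the resulting random linear functional as an $\mathbf{H}^{-\mathbf{s}}$-valued random variable. Since the theorem is just the $u_j^N \equiv 0$ specialization of Lemma \ref{JbarReal}, the plan mirrors the one to be carried out for that lemma.

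First, using an It\^o--Stratonovich conversion applied to \eqref{eq:model}, I would decompose
$$J^N(\varphi) = A^N(\varphi) + M^N(\varphi) + Q^N(\varphi),$$
where $A^N(\varphi) = \frac{1}{N}\sum_j \int_0^T \varphi(t, X_j^N(t)) \cdot b(X_j^N(t), V^N(t))\,dt$, $M^N(\varphi) = \frac{\eps_N}{N}\sum_j \int_0^T \varphi(t, X_j^N(t)) \cdot \sigma(X_j^N(t), V^N(t))\,dW_j(t)$, and
$$Q^N(\varphi) = \frac{\eps_N^2}{2N}\sum_{j=1}^N \int_0^T \tr\bigl[D_x\varphi(t, X_j^N(t))\,\sigma\sigma^T(X_j^N(t), V^N(t))\bigr]\,dt$$
is the Stratonovich correction, coming from the quadratic covariation $\il \varphi(\cdot, X_j^N(\cdot)), X_j^N(\cdot)\ir_T$.

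Next I would bound each summand in $L^2$. A standard Gr\"onwall argument applied to \eqref{eq:model}, combining Condition \ref{maincondition} with Condition \ref{initialcondition}(ii), yields $\sup_{N,j} E\|X_j^N\|_\infty^2 < \infty$. Together with \eqref{eq:eq208}, this gives $E|A^N(\varphi)|^2 \le C \|\varphi\|_{L^\infty}^2$. Since $|\sigma| \le L$, It\^o isometry and independence of the $W_j$ yield $E|M^N(\varphi)|^2 \le C N^{-1}\|\varphi\|_{L^\infty}^2$, and $|Q^N(\varphi)| \le C \|D_x\varphi\|_{L^\infty}$ pointwise. Combining,
$$E|J^N(\varphi)|^2 \le C_N \bigl(\|\varphi\|_{L^\infty}^2 + \|D_x\varphi\|_{L^\infty}^2\bigr).$$
The conditions $s_1 > 1/2$ and $s_2 > d/2+1$ defining $\sO_d$ are tailored precisely so that the Sobolev embedding $\mathbf{H}^{\mathbf{s}} \hookrightarrow \sC(\bar U;\,\sC_b^1(\R^d, \R^d))$ holds, giving $\|\varphi\|_{L^\infty} + \|D_x\varphi\|_{L^\infty} \le C\|\varphi\|_{\mathbf{s}}$ and hence the desired $E|J^N(\varphi)|^2 \le C_N\|\varphi\|_{\mathbf{s}}^2$.

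The main obstacle is the final step: producing from this $L^2$-bounded linear operator $\sC_c^\infty(U \times \R^d, \R^d) \to L^2(\Omega)$ a strongly measurable $\mathbf{H}^{-\mathbf{s}}$-valued random variable with the required pathwise identification. The difficulty is that this operator is generically not Hilbert--Schmidt, so naive Radonification does not directly apply. The approach of \cite{flagubgiator}, which will be reprised in Lemma \ref{JbarReal}, is to pick an intermediate exponent $\mathbf{s}' \in \sO_d$ with $\mathbf{s}' < \mathbf{s}$ componentwise for which the embedding $\mathbf{H}^{\mathbf{s}} \hookrightarrow \mathbf{H}^{\mathbf{s}'}$ is Hilbert--Schmidt (exploiting the boundedness of $U$ together with an appropriate spatial truncation argument). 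Since the Sobolev embedding into $\sC^1$ holds throughout $\sO_d$, the same second-moment bound applies at level $\mathbf{s}'$, and the Hilbert--Schmidt factorization yields $\sum_k E|J^N(\phi_k)|^2 < \infty$ for an orthonormal basis $\{\phi_k\}$ of $\mathbf{H}^{\mathbf{s}'}$. This produces $\sJ^N$ as an element of $\mathbf{H}^{-\mathbf{s}'} \subset \mathbf{H}^{-\mathbf{s}}$ a.s., and the identity $\il \sJ^N, \varphi\ir = J^N(\varphi)$ a.s.~for $\varphi \in \sC_c^\infty(U\times \R^d, \R^d)$ follows from the construction and the density of $\sC_c^\infty(U\times \R^d, \R^d)$ in $\mathbf{H}^{\mathbf{s}'}$.
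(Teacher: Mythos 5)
Your decomposition of $J^N(\varphi)$ into drift, martingale, and Stratonovich-correction parts, the resulting bound $E|J^N(\varphi)|^2 \le C_N(\|\varphi\|_{L^\infty}^2+\|D_x\varphi\|_{L^\infty}^2)$, and the Sobolev embedding giving $E|J^N(\varphi)|^2\le C_N\|\varphi\|_{\mathbf{s}}^2$ are all fine. The gap is in the final step, and you have correctly identified it as the main obstacle: a second-moment bound only makes $\varphi\mapsto J^N(\varphi)$ a bounded operator into $L^2(\Omega)$, with the exceptional null set depending on $\varphi$, and your proposed repair via a Hilbert--Schmidt embedding $\mathbf{H}^{\mathbf{s}}\hookrightarrow\mathbf{H}^{\mathbf{s}'}$ with $\mathbf{s}'\in\sO_d$ cannot work. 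In the time variable the embedding $H^{s_1}(U)\hookrightarrow H^{s_1'}(U)$ on the bounded interval $U$ is Hilbert--Schmidt only when $s_1-s_1'>1/2$, which is impossible when both $s_1,s_1'$ lie in $(\tfrac12,1)$; and you cannot take $s_1'\le\tfrac12$ because your second-moment bound at level $\mathbf{s}'$ requires the sup-norm embedding, i.e.\ $\mathbf{s}'\in\sO_d$. In the spatial variable the embedding $H^{s_2}(\R^d)\hookrightarrow H^{s_2'}(\R^d)$ over the unbounded domain is not even compact, and no truncation of the test functions changes the identity of the Hilbert space you must radonify into. So no admissible $\mathbf{s}'$ exists, and the factorization argument collapses.

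The paper's proof (of Lemma \ref{JbarReal}, of which the theorem is the $u^N_j\equiv 0$ case) gets around this with a sharper, pathwise estimate rather than an in-mean one. Expanding $\varphi$ in the mixed Fourier basis $e^k_{n,\xi}$ and writing $J^N_j(\varphi)=\sum_k\sum_n\int\hat\varphi_k(n,\xi)Z^N_{j,k}(n,\xi)\,d\xi$ with $Z^N_{j,k}(n,\xi)=\int_0^T e^k_{n,\xi}(t,X^N_j(t))\circ dX^N_j(t)$, one uses that the Fourier coefficients of the current concentrated on the random path satisfy $\sup_{n}E|Z^N_{j,k}(n,\xi)|^2\lesssim 1+|\xi|^2$ (by BDG, boundedness of $\sigma$, and linear growth of $b$). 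Cauchy--Schwarz against the weight $(1+n^2)^{s_1}(1+|\xi|^2)^{s_2}$ then yields the \emph{almost sure} bound $|J^N(\varphi)|\le C^N_{\mathbf{s}}\|\varphi\|_{\mathbf{s}}$ with a single square-integrable random constant $C^N_{\mathbf{s}}$ valid simultaneously for all $\varphi$ (the summability of the weights is exactly where $\mathbf{s}\in\sO_d$ enters), and \cite[Lemma 5]{flagubgiator} converts such a bound into the pathwise realization. This uniform-in-$\varphi$ random bound is the idea missing from your argument; without it, or the Hilbert--Schmidt structure you cannot have, the passage from an a.s.-defined family of scalar random variables to a single $\mathbf{H}^{-\mathbf{s}}$-valued random variable does not go through.
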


Note that the pathwise realizations $\{\sJ^N\}$ are a.s. compactly supported in the first coordinate. Namely, if $U_0 \subset U$ is an open set such that $U_0\cap [0,T]=\emptyset$, then for all $\varphi$ with compact support in $U_0\times \R^d$, $\il\sJ^N, \varphi \ir=0$ a.s. In particular,
$\sJ^N$ is a distribution a.s. supported in $[0,T]\times \R^d$.

In this work we will prove a large deviation principle for the pair $(\mu^N, \sJ^N)$ in the space $\sP_1(\sX) \times \mathbf{H}^{-\mathbf{s}}$ for each $\mathbf{s} \in \sO_d$, from which a LDP describing the asymptotics of $V^N$ will follow by the contraction principle. 
We begin by introducing the rate function that will govern the large deviation behavior.

\subsection{Rate Function}
Let 
 $\sR$ denote the set of positive measures $r$ on $\sB([0,T]\times\R^m)$ such that $r([0,t]\times\R^m) = t$ for all $0\le t \le T$, and define 
 \[
 	\sR_1 \doteq \left\{ r \in \sR : \int_{[0,T]\times\R^m} |y|\,r(dt,dy) < \infty \right\}. 
\]
The space $\sR_1$  is a Polish space under the Wasserstein-1 metric (defined as in \eqref{eq:eq1256r} with $S=[0,T]\times \RR^m$). Each $r \in \sR_1$ can be decomposed as $r(dt, dy) = r_t(dy)\,dt$, where $r_t \in \sP(\R^m)$. For an $\sR_1$-valued random variable $\rho$, consider the McKean-Vlasov equation
 \begin{equation}\label{eq:LimitContSDE}
\begin{aligned}
	dX(t) &= b(X(t), V(t))\,dt + \int_{\R^m} \sigma(X(t), V(t))y \,\rho_t(dy)\,dt, \\
	V(t) &= P\circ X(t)^{-1}, \qquad V(0) = \mu_0, 
\end{aligned}
\end{equation}
where $X$ is stochastic process with sample paths in $\sX$, 
 $\rho(dt,dy) = \rho_t(dy)\,dt$ is the disintegration of $\rho$, and $\mu_0$ is the measure in Condition \ref{initialcondition}(i). 
 The distribution of a pair $(X, \rho)$ that solves \eqref{eq:LimitContSDE}, which is a probability measure on $\sZ \doteq \sX \times \sR_1$,  is called a weak solution of \eqref{eq:LimitContSDE}. 
 Let $\sS(\sZ) \subset \sP(\sZ)$ denote the set of all such weak solutions.
 With an abuse of notation, we will denote   the canonical coordinate maps on $(\sZ, \sB(\sZ))$ 
 by $(X, \rho)$ once more. That is, 
 \[
 	X(\xi, r) = \xi, \qquad \rho(\xi, r) = r, \qquad (\xi, r) \in \sZ. 
\] 
Note that if $\Theta\in \sS(\sZ)$, then $(X, \rho)$ satisfy \eqref{eq:LimitContSDE} $\Theta$-a.s. For each $\Theta \in \sP(\sZ)$ and $0\le t \le T$, define the measure
\[
	\nu_\Theta(t) \doteq \Theta \circ X(t)^{-1}, 
\]
which is an element of $\sP(\R^d)$. 
When $\Theta \in \sS(\sZ)$, it is easy to check that Condition \ref{maincondition} and Gronwall's lemma imply that $E_\Theta\left[ |X(t)| \right] < \infty$, and hence $\nu_\Theta(t) \in \sP_1(\R^d)$ for each $0\le t \le T$. Letting $\nu_\Theta$ denote the map $t \mapsto \nu_\Theta(t)$,  in fact we have that $\nu_\Theta \in \sV$. 
 For each 
$\varphi \in \sC_c^\infty(U \times \R^d, \R^d)$, define the map $G_{\varphi}: \sS(\sZ) \to \R$ by 
\begin{equation}\label{eq:eq457}
\begin{aligned}
	G_\varphi(\Theta) & \doteq E_\Theta\left[ \int_0^T \varphi\left(t, X(t) \right)\cdot dX(t) \right] \\
	&=  E_\Theta\left[ \int_0^T \varphi\left(t, X(t) \right) \cdot b(X(t), \nu_{\Theta}(t))dt \right] \\
	&\qquad +
	E_\Theta\left[ \int_{[0,T]\times\R^m}\varphi\left(t, X(t) \right) \cdot \sigma(X(t), \nu_{\Theta}(t)) y \,\rho(dt,dy) \right].
\end{aligned}
\end{equation}
Now let
 $$\sP_2(\sZ) \doteq \left\{ \Theta \in \sP(\sZ) : E_\Theta\left[ \int_{[0,T]\times\R^m}|y|^2\,\rho(dt,dy) \right]<\infty \right\},$$
and for $\sJ \in \mathbf{H}^{-\mathbf{s}}$, define 
\[
	\sP^*(\sJ) \doteq \left\{ \Theta \in \sS(\sZ)\cap \sP_2(\sZ) :   \il\sJ, \varphi\ir=G_\varphi(\Theta) \; \mbox{for all}\; \varphi \in \sC_c^\infty\left(U \times \R^d, \R^d\right) \right\}. 
\]
Define $I: \sP_1(\sX) \times \mathbf{H}^{-\mathbf{s}} \to [0,\infty]$ as
\begin{equation}\label{eq:RateFunction}
	I(\mu, \sJ) \doteq \inf \left\{ E_\Theta \left[ \frac{1}{2} \int_{[0,T]\times\R^m} |y|^2\,\rho(dt,dy) \right] : \Theta_{(1)} = \mu,  \Theta \in \sP^*(\sJ)\right\}, 
\end{equation}
where we recall that $\Theta_{(1)}$ denotes the marginal of $\Theta$ on $\sX$.

\begin{remark}
Note that the domain of the function $I$ depends on $\mathbf{s} \in \sO_d$.  However, it turns out (see Lemma \ref{Gpathwisereal}) that if $I(\mu, \sJ)<\infty$
for some $\mathbf{s} \in \sO_d$ and $(\mu, \sJ) \in \sP_1(\sX) \times \mathbf{H}^{-\mathbf{s}}$,
	then $\sJ \in \mathbf{H}^{-\mathbf{s}'}$ for all
	$\mathbf{s}' \in \sO_d$, and the value of $I(\mu, \sJ)$ is independent of $\mathbf{s}$.
\end{remark}

\subsection{Main Results}

In this section we present the main results.  For each $N \in \N$, let $\mu^N$, $V^N$ and $\sJ^N$ be as in \eqref{eq:EmpMzrmu} , \eqref{eq:EmpMzrV}, and Theorem \ref {JNpathwiserealization} respectively. Our first main result is a law of large numbers for $(\mu^N, V^N, \sJ^N)$. 

By using the Lipschitz property of $b$ it can be checked that for $\mu_0$ as in Condition \ref{initialcondition} and any $\RR^d$ valued random variable $\xi_0$ on $(\Omega, \sF, P)$ with distribution $\mu_0$, there is an a.s. unique solution $\xi$, with sample paths in $\sX$, to the equation
\begin{equation}\label{eq:deteq}
	\xi(t) = \xi_0 + \int_0^t b\left(\xi(s), V^*(s)\right)\, ds, \qquad V^*(t) = P\circ \xi(t)^{-1}, \; 0 \le t \le T.
\end{equation}
Let 
\begin{equation}\label{eq:LLNmu}
	\mu^*  = P\circ \xi^{-1}.
\end{equation}
Using the linear growth of $b$ and Condition \ref{initialcondition}(ii) it can be checked that $\mu^* \in \sP_1(\sX)$.

The following theorem gives the law of large numbers. Its proof is given in Section \ref{section:LLN}.
\begin{theorem}[LLN]\label{LLN}
Assume Conditions \ref{maincondition} and \ref{initialcondition} hold and let $\mathbf{s} \in \sO_d$. %
Then,
\[
	\left(\mu^N, V^N, \sJ^N\right) \overset{P}{\to} \left(\mu^*, V^*, \sJ^* \right) \qquad \mbox{as}\;N \to \infty, 
\]
in $\sP_1(\sX) \times \sV\times  \mathbf{H}^{-\mathbf{s}}$, where $V^*$ and $\mu^*$ are as in \eqref{eq:deteq} and \eqref{eq:LLNmu} and
$\sJ^*$ is characterized as
 \begin{equation}
 	\il \sJ^*, \varphi \ir = \int_0^T \left\il  V^*(t) ,\varphi(t, \cdot)\cdot b\left(\cdot, V^*(t)\right)\right\ir\,dt, 
 \end{equation}
 for $\varphi \in \sC_c^\infty(U\times \R^d, \R^d)$.
 \end{theorem}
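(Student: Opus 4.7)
The strategy is to first establish convergence in probability of the empirical measures $(\mu^N, V^N)$ via a coupling with a deterministic system---natural since the noise vanishes---and then to treat the stochastic currents by combining pointwise convergence on test functions with a tightness argument in the norm topology of $\mathbf{H}^{-\mathbf{s}}$. Since the limiting triple is deterministic, convergence in distribution for each component will upgrade automatically to convergence in probability, and joint convergence in the product space then follows.

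For the empirical measures, I would introduce the coupled deterministic flow $\{\bar X_j^N\}_{j=1}^N$ defined by
\begin{equation*}
\bar X_j^N(t) = x_j^N + \int_0^t b(\bar X_j^N(s), \bar V^N(s))\, ds, \qquad \bar V^N(t) = \frac{1}{N}\sum_{j=1}^N \delta_{\bar X_j^N(t)}.
\end{equation*}
The Lipschitz property of $b$ (in $x$ and in the Wasserstein-$1$ variable), the uniform bound $|\sigma|\le L$, and the Burkholder--Davis--Gundy inequality yield, via Gronwall applied to $E[\max_{j\le N}\|X_j^N - \bar X_j^N\|_\infty^2]$, that this quantity is of order $\eps_N^2$. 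Since $|\langle\mu^N, f\rangle - \langle\bar\mu^N, f\rangle| \le \frac{1}{N}\sum_j \|X_j^N - \bar X_j^N\|_\infty$ for any $f$ of Lipschitz constant $1$ on $\sX$, and similarly for $V^N(t) - \bar V^N(t)$, this gives $d_1(\mu^N, \bar\mu^N)\overset{P}{\to} 0$ and $\sup_t d_1(V^N(t), \bar V^N(t))\overset{P}{\to} 0$. Convergence of the purely deterministic sequences $\bar \mu^N \to \mu^*$ in $\sP_1(\sX)$ and $\bar V^N \to V^*$ in $\sV$ is then a standard Vlasov-stability argument: Condition \ref{initialcondition}(i)-(ii) implies $d_1(\bar V^N(0), \mu_0) \to 0$ (the bounded-Lipschitz convergence is upgraded to Wasserstein-$1$ using the uniform second-moment bound and the resulting uniform integrability of the first moment), and a Gronwall comparison with the solution $\xi$ of \eqref{eq:deteq} propagates this uniformly over $[0,T]$.

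For the stochastic currents, the Stratonovich-to-It\^o conversion gives
\begin{equation*}
\langle \sJ^N, \varphi\rangle = \int_0^T \langle V^N(t), \varphi(t,\cdot)\cdot b(\cdot, V^N(t))\rangle\, dt + \eps_N M^N(\varphi) + \eps_N^2 C^N(\varphi),
\end{equation*}
where $M^N(\varphi) = \frac{1}{N}\sum_j \int_0^T \varphi(t, X_j^N(t))\cdot \sigma(X_j^N(t), V^N(t))\,dW_j(t)$ is a martingale with $E|M^N(\varphi)|^2 \le C\|\varphi\|_\infty^2/N$, and $C^N(\varphi)$ is the quadratic-variation correction involving $D_x\varphi$ and $\sigma\sigma^\top$, uniformly bounded for fixed $\varphi$. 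For each $\varphi \in \sC_c^\infty(U\times\R^d, \R^d)$, the $\sV$-convergence $V^N \to V^*$ combined with the Lipschitz continuity of $b$ shows that the drift integral converges in probability to $\langle \sJ^*, \varphi\rangle$, while the $\eps_N$- and $\eps_N^2$-prefactored terms vanish. Hence $\langle \sJ^N, \varphi\rangle \overset{P}{\to} \langle \sJ^*, \varphi\rangle$ for every test function $\varphi$.

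The main obstacle is upgrading this pointwise convergence on test functions to convergence in the \emph{norm} topology of $\mathbf{H}^{-\mathbf{s}}$. To this end I would invoke the a priori second-moment bound $\sup_N E\|\sJ^N\|_{-\mathbf{s}_0}^2 < \infty$ from Lemma \ref{L2bound} (for a suitable choice of $\mathbf{s}_0 \in \sO_d$) together with the compact embedding furnished by Lemma \ref{compactembedding}, applied after controlling the $x$-support of $\sJ^N$ via the uniform moment estimates from the first step. This yields tightness of $\{\sJ^N\}$ in the norm topology. Density of $\sC_c^\infty(U\times\R^d, \R^d)$ in $\mathbf{H}^{\mathbf{s}}$ and continuity of the evaluation maps then identify every subsequential distributional limit as the constant $\sJ^*$, so $\sJ^N \Rightarrow \sJ^*$ in the norm topology, which is equivalent to $\sJ^N \overset{P}{\to} \sJ^*$ because the limit is deterministic. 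Combining with the preceding paragraph gives the claimed joint convergence in $\sP_1(\sX) \times \sV \times \mathbf{H}^{-\mathbf{s}}$.
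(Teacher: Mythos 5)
Your proposal is correct in substance but follows a genuinely different route from the paper for the empirical-measure part. The paper proves the LLN as a degenerate case of the large-deviation machinery: it sets $u_j^N\equiv 0$ in the controlled system, invokes Lemma \ref{keytightness} to get tightness of $(\mu^N,Q^N,\sJ^N)$ and the characterization $Q\in\sP^*(\sJ)$ with $Q(\rho=0)=1$, and then identifies the limit through the unique solvability of \eqref{eq:deteq} and the identity $G_\varphi(Q)=\il\sJ,\varphi\ir$. Your approach instead couples $X_j^N$ directly with the noiseless $N$-particle flow and quantifies the discrepancy as $O(\eps_N)$, reducing the problem to deterministic Vlasov stability; this is more elementary and self-contained for $(\mu^N,V^N)$, and it even yields a rate, whereas the paper's argument is shorter only because it recycles Lemma \ref{keytightness}, which was built for the LDP anyway. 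For the current $\sJ^N$ both arguments ultimately rest on the same two ingredients: a uniform second-moment bound on $\|\sJ^N\|_{-\mathbf{s}}$ together with the truncation-plus-compact-embedding tightness argument, and identification of the limit on a countable dense set of test functions.

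Two repairable slips. First, applying Gronwall to $E[\max_{j\le N}\|X_j^N-\bar X_j^N\|_\infty^2]$ does not give $O(\eps_N^2)$: bounding the maximum of the $N$ stochastic-integral terms via Burkholder--Davis--Gundy costs a factor of $N$, and $N\eps_N^2$ need not vanish. You should run Gronwall on the averaged quantity $\frac{1}{N}\sum_{j=1}^N E\bigl[\|X_j^N-\bar X_j^N\|_\infty^2\bigr]$, which is $O(\eps_N^2)$ and is all that your subsequent Wasserstein estimates $d_1(\mu^N,\bar\mu^N)\le \frac{1}{N}\sum_j\|X_j^N-\bar X_j^N\|_\infty$ actually use. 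Second, the a priori bound $\sup_N E\|\sJ^N\|_{-\mathbf{s}}^2<\infty$ comes from Lemma \ref{JbarReal} (via $\sup_N E[(C^N_{\mathbf{s}})^2]<\infty$), not from Lemma \ref{L2bound}; and since the $\sJ^N$ are not compactly supported in $x$, the compact embedding of Lemma \ref{compactembedding} must be applied after the $g_M$-truncation $\sJ^N=\sJ^{N,M}+\sJ^{N,M}_c$ with the remainder controlled uniformly in $N$ and then Lemma \ref{tightnessdecomp} invoked --- this is exactly Lemma \ref{lem:tightjn}, which you could cite directly rather than re-deriving.
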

 
 \begin{remark}
The pair $(V^*, \sJ^*)$ can alternatively be characterized as the unique solution of the equation
 \begin{equation}\label{eq:distlimita}
	\frac{\partial}{\partial t} V + \nabla \cdot b(\cdot, V)V = 0, \qquad \sJ = b(\cdot, V)V,  \qquad V(0) = \mu_0, 
\end{equation}
in the distributional sense on $(0,T) \times \R^d$, by which we mean that for all $\varphi \in \sC_c^\infty((0,T)\times \R^d, \R)$, 
\[
	\int_0^T \left\il V(t), \frac{\partial}{\partial t}\varphi(t, \cdot) \right\ir \,dt + \int_0^T \left\il  V(t) ,\nabla \varphi(t, \cdot)\cdot b(\cdot, V(t)) \right\ir\,dt = 0, 
\]
and for all $\varphi \in \sC_c^\infty((0,T)\times \R^d, \R^d)$, 
\begin{align*}
	\il \sJ, \varphi \ir &= \int_0^T \left\il  V(t), \varphi(t, \cdot)\cdot b(\cdot, V(t)) \right\ir \,dt.
\end{align*}
\end{remark}

Recall the function $I$ defined in \eqref{eq:RateFunction}, and for each $N \in \N$ let $a_N \doteq N/\eps_N^2$. Our main large deviation result is as follows. 

\begin{theorem}[LDP]\label{main} Assume Conditions \ref{maincondition} and \ref{initialcondition} hold. For each $\mathbf{s} \in \sO_d$, $I$ is a rate function on $\sP_1(\sX) \times \mathbf{H}^{-\mathbf{s}}$. Furthermore, 
\begin{enumerate}[(i)]
\item The sequence $\{(\mu^N, \sJ^N), N \in \N\}$ satisfies the large deviation upper bound on $\sP_1(\sX)  \times \mathbf{H}^{-\mathbf{s}}$ with speed $a_N$ and rate function $I$. Namely, for all closed sets $F$ in 
$\sP_1(\sX)  \times \mathbf{H}^{-\mathbf{s}}$, 
$$\limsup_{N\to \infty} \frac{1}{a_N} \log P\left(\left(\mu^N, \sJ^N\right) \in F\right) \le - \inf_{(\mu, \sJ)\in F} I(\mu, \sJ).$$

\item If in addition Condition \ref{sigmacondition} holds, then $\{(\mu^N, \sJ^N), N \in \N\}$ satisfies the large deviation lower bound on $\sP_1(\sX) \times \mathbf{H}^{-\mathbf{s}}$ with speed $a_N$ and rate function $I$. Namely, for all open sets $G$ in $\sP_1(\sX)  \times \mathbf{H}^{-\mathbf{s}}$,
$$\liminf_{N\to \infty} \frac{1}{a_N} \log P\left(\left(\mu^N, \sJ^N\right) \in G \right) \ge - \inf_{(\mu, \sJ)\in G} I(\mu, \sJ).$$
\end{enumerate} 
\end{theorem}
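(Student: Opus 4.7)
My plan is to prove the large deviation principle via the Laplace principle equivalence, using the weak convergence approach based on the Boué--Dupuis variational representation for exponential functionals of Brownian motions. Fix a bounded continuous $F: \sP_1(\sX) \times \mathbf{H}^{-\mathbf{s}} \to \R$. Applying the variational representation (cf.\ \cite{BouDup, BudDup}) to $-a_N F(\mu^N, \sJ^N)$, I rewrite
\[
-\tfrac{1}{a_N}\log E\exp\bigl(-a_N F(\mu^N, \sJ^N)\bigr)
= \inf_{u^N} E\Bigl[F(\bar\mu^N, \bar\sJ^N) + \tfrac{1}{2N}\sum_{j=1}^N\int_0^T |u_j^N(t)|^2\,dt\Bigr],
\]
where the infimum is over $\{\sF(t)\}$-progressively measurable controls $u^N = (u_1^N,\dots,u_N^N)$, and $(\bar X^N_j, \bar\mu^N, \bar V^N, \bar\sJ^N)$ are the controlled analogues of the state processes obtained by replacing $dW_j$ with $dW_j + \eps_N^{-1} u^N_j\,dt$. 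The proof of the upper and lower Laplace bounds then reduces to studying the weak limits of these controlled quantities along sequences of nearly optimal controls, and to a matching constructive argument.

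For the upper bound, I take an almost optimal sequence of controls with cost bounded by some $M<\infty$. Following the strategy outlined in the introduction, I introduce the joint empirical measures
\[
Q^N \doteq \frac{1}{N}\sum_{j=1}^N \delta_{(\bar X_j^N, \bar\rho_j^N)} \in \sP(\sZ),
\qquad \bar\rho_j^N(dt,dy) \doteq \delta_{u_j^N(t)}(dy)\,dt,
\]
and show tightness of $(\bar\mu^N, Q^N, \bar\sJ^N)$. Tightness of the first two components is standard once one has the uniform $L^2$ bound on the controls (here Lemma \ref{L2bound} and the boundedness of $\sigma$ play a key role in producing the a priori bounds on $\bar X_j^N$). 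Tightness of $\bar\sJ^N$ in the \emph{norm} topology on $\mathbf{H}^{-\mathbf{s}}$ is the delicate part: using Lemma \ref{JbarReal} to give the pathwise realization of $\bar\sJ^N$, approximating by distributions with compact first-coordinate support, and invoking a compact embedding of Sobolev spaces (Lemma \ref{compactembedding} together with Lemma \ref{keytightness}), one identifies a relatively compact subset of $\mathbf{H}^{-\mathbf{s}}$ where the $\bar\sJ^N$ concentrate. Passing to a weakly convergent subsequence $(\bar\mu^N, Q^N, \bar\sJ^N) \Rightarrow (\mu, Q, \sJ)$, one checks that $Q \in \sS(\sZ) \cap \sP_2(\sZ)$, that $Q_{(1)} = \mu$, and that $\il\sJ, \varphi\ir = G_\varphi(Q)$ for every test function $\varphi$. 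Combining this with Fatou's lemma for the running cost and the definition \eqref{eq:RateFunction} of $I$ yields the Laplace upper bound
\[
\liminf_{N\to\infty} -\tfrac{1}{a_N}\log E\exp(-a_N F(\mu^N,\sJ^N)) \ge \inf_{(\mu,\sJ)}\bigl\{F(\mu,\sJ)+I(\mu,\sJ)\bigr\}.
\]
The same tightness and identification argument, applied to level sets of $I$, shows that $I$ is a rate function.

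For the lower bound, given $\delta>0$, I select a near-minimizer $(\mu_0^*, \sJ_0^*)$ of $F+I$ and, via \eqref{eq:RateFunction}, a $\Theta^* \in \sP^*(\sJ_0^*)$ with $\Theta^*_{(1)} = \mu_0^*$ and cost within $\delta$. Under Condition \ref{sigmacondition}(i) the McKean--Vlasov equation \eqref{eq:LimitContSDE} enjoys weak uniqueness (Lemma \ref{lem:weakuniq}), which lets me realize $\Theta^*$ as the law of $(X^*,\rho^*)$ on a suitable probability space, decompose $\rho^*_t = \delta_{\bar u(t,X^*(t))}(dy)\,dt$ modulo a randomization, and transfer this structure to the $N$-particle system. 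Building i.i.d.\ copies via an infinite product space construction and setting $u_j^N(t) = \bar u(t,\bar X_j^N(t))$ (with the appropriate randomization), I obtain controlled trajectories $(\bar\mu^N,\bar\sJ^N)$ whose joint empirical measures converge, by a propagation-of-chaos argument driven by weak uniqueness and Condition \ref{sigmacondition}(ii), to $(\mu_0^*, \sJ_0^*)$. Inserting this particular sequence of controls into the variational representation gives the matching Laplace lower bound.

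The principal obstacle will be the tightness of $\bar\sJ^N$ in the norm topology on $\mathbf{H}^{-\mathbf{s}}$, because the controls inject an extra drift of order $\eps_N^{-1}$ into the dynamics that must be shown not to degrade the Sobolev regularity of the current. The second serious obstacle is the lower bound: weak uniqueness (Lemma \ref{lem:weakuniq}) forces Condition \ref{sigmacondition}(i), and even with it one must carefully execute the infinite product construction so that the empirical measures $(\bar\mu^N, \bar\sJ^N)$ actually converge to the targeted $(\mu_0^*,\sJ_0^*)$ rather than merely to some element of $\sP^*(\sJ_0^*)$ that attains the same cost.
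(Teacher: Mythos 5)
Your overall architecture coincides with the paper's: the Bou\'e--Dupuis variational representation, the joint empirical measures $Q^N$ of controlled trajectories and relaxed controls, tightness of $\bar\sJ^N$ in the norm topology of $\mathbf{H}^{-\mathbf{s}}$ via compactly supported truncations plus the compact Sobolev embedding, identification of weak limits as elements of $\sP^*(\bar\sJ)$ with $Q_{(1)}=\bar\mu$, Fatou's lemma for the upper bound, and the rate-function property from the same compactness machinery. The one place your sketch departs from the paper---and where, taken literally, it would run into trouble---is the form of the control in the lower bound. You propose to write $\rho^*_t = \delta_{\bar u(t,X^*(t))}(dy)\,dt$ ``modulo a randomization'' and to use the feedback control $u_j^N(t)=\bar u(t,\bar X_j^N(t))$. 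But Lemma \ref{lem:weakuniq} does not give uniqueness for feedback-controlled dynamics: it states that two elements of $\sS(\sZ)\cap\sP_2(\sZ)$ with the same law of $(X(0),\rho)$ coincide, i.e.\ uniqueness is conditional on the \emph{open-loop} relaxed control and the initial point. Accordingly the paper disintegrates $\Theta_0\circ\theta^{-1}(dx\,dr)=\mu_0(dx)\,\Lambda_0(x,dr)$ and, on the infinite product space, samples $\rho_j\sim\Lambda_0(x_j^N,\cdot)$ independently across particles, then uses the open-loop control $u_j(t)=\int_{\R^m}y\,(\rho_j)_t(dy)$; any limit $Q$ of $Q^N$ then satisfies $Q\circ\theta^{-1}=\Theta_0\circ\theta^{-1}$ by construction, and weak uniqueness forces $Q=\Theta_0$, which pins down $\bar\mu=(\Theta_0)_{(1)}$ and, through the identity $\il\bar\sJ,\varphi\ir=G_\varphi(Q)$, also $\bar\sJ=\sJ_0$---this is exactly how the paper resolves the worry you raise about converging to a different element of $\sP^*(\sJ_0)$ with the same cost. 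A genuine state-feedback construction would additionally require a mimicking/superposition argument and a uniqueness statement for the feedback-controlled Vlasov dynamics, neither of which is supplied by the paper's lemmas. Replace the feedback control by this conditional-on-initial-data sampling and your proof coincides with the paper's.
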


The proof of Theorem \ref{main}(i) is in Section \ref{section:upper}, and the proof of Theorem \ref{main}(ii) is in Section \ref{section:lower}. The rate function property of $I$ is proved in Section \ref{section:ratefunction}. The proof of Theorem \ref{LLN} is saved for Section \ref{section:LLN}, since it follows along the lines of the proof of the large deviation upper bound. 

It is easy to verify that the map $ \nu \mapsto  \{ t\mapsto \nu \circ \pi_t^{-1}\}$ is a continuous map from
$\sP_1(\sX)$ into $\sV$, and recall from above that each $\Theta \in \sS(\sZ)$ induces $\nu_\Theta \in \sV$. From this and the contraction principle we immediately have a large deviation principle for $\{(\mu^N, V^N, \sJ^N)\}$. In particular, we have the  following corollary.
Define
$\tilde I :\sV \times   \mathbf{H}^{-\mathbf{s}} \to [0,\infty]$ as
\begin{equation}\label{eq:ratefncon}
	\tilde I (V,\sJ) \doteq \inf \left\{ E_\Theta \left[ \frac{1}{2} \int_{[0,T]\times\R^m} |y|^2\,\rho(dt,dy) \right] : \nu_{\Theta} =  V, \Theta \in \sP^*(\sJ) \right\}.\end{equation}
\begin{corollary}\label{cor:conpri}
Assume Conditions \ref{maincondition} and \ref{initialcondition} hold. For each $\mathbf{s} \in \sO_d$, $\tilde I$ is a rate function on $\sV \times \mathbf{H}^{-\mathbf{s}}$. Furthermore, 
\begin{enumerate}[(i)]
\item The sequence $\{(V^N, \sJ^N), N \in \N\}$ satisfies the large deviation upper bound on $\sV  \times \mathbf{H}^{-\mathbf{s}}$ with speed $a_N$ and rate function $\tilde I$.
\item If in addition Condition \ref{sigmacondition} holds, then $\{(V^N, \sJ^N), N \in \N\}$ satisfies the large deviation lower bound on $\sV \times \mathbf{H}^{-\mathbf{s}}$ with speed $a_N$ and rate function $\tilde I$.
\end{enumerate}	
\end{corollary}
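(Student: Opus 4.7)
The plan is to derive Corollary \ref{cor:conpri} directly from Theorem \ref{main} by the contraction principle. Introduce the map $\Psi: \sP_1(\sX) \times \mathbf{H}^{-\mathbf{s}} \to \sV \times \mathbf{H}^{-\mathbf{s}}$ defined by $\Psi(\mu, \sJ) \doteq (\Phi(\mu), \sJ)$, where $\Phi(\mu)(t) \doteq \mu \circ \pi_t^{-1}$ for $t \in [0,T]$. By construction $(V^N, \sJ^N) = \Psi(\mu^N, \sJ^N)$, so once $\Psi$ is shown to be continuous, the upper bound in (i) follows from Theorem \ref{main}(i) applied to $\Psi^{-1}(F)$ for closed $F \subset \sV \times \mathbf{H}^{-\mathbf{s}}$, and the lower bound in (ii) follows from Theorem \ref{main}(ii) applied to $\Psi^{-1}(G)$ for open $G$.

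For the continuity of $\Phi$, given $f \in \sL(\R^d)$ the composition $f \circ \pi_t$ lies in $\sL(\sX)$, since $|f(x(t)) - f(y(t))| \le |x(t) - y(t)| \le \|x-y\|_\infty = d_{\sX}(x,y)$. Hence $d_1(\mu \circ \pi_t^{-1}, \mu' \circ \pi_t^{-1}) \le d_1(\mu, \mu')$ uniformly in $t \in [0,T]$, which gives $\sup_{0 \le t \le T} d_1(\Phi(\mu_n)(t), \Phi(\mu)(t)) \to 0$ whenever $\mu_n \to \mu$ in $\sP_1(\sX)$, i.e.\ convergence in $\sV$. Continuity of the second coordinate is trivial, so $\Psi$ is continuous.

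To identify the contracted rate function with $\tilde I$, observe that for any $\Theta \in \sS(\sZ)$ one has $\nu_\Theta(t) = \Theta \circ X(t)^{-1} = \Theta_{(1)} \circ \pi_t^{-1} = \Phi(\Theta_{(1)})(t)$. Consequently the constraint $\Theta_{(1)} = \mu$ together with $\Phi(\mu) = V$ is equivalent to the single constraint $\nu_\Theta = V$, and a reordering of infima yields
\begin{equation*}
\inf\{I(\mu,\sJ) : \Phi(\mu) = V\} = \inf\left\{ E_\Theta\!\left[\tfrac{1}{2}\int_{[0,T]\times\R^m}|y|^2 \rho(dt,dy)\right] : \nu_\Theta = V,\; \Theta \in \sP^*(\sJ)\right\} = \tilde I(V,\sJ).
\end{equation*}
Combined with the rate function property of $I$ from Theorem \ref{main}, this identity shows that $\{\tilde I \le \ell\} = \Psi(\{I \le \ell\})$ for each $\ell < \infty$ (the infimum defining $\tilde I$ is attained by compactness of $\{I \le \ell\}$ and continuity of $\Psi$), hence $\{\tilde I \le \ell\}$ is the continuous image of a compact set and is compact. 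Thus $\tilde I$ is a rate function on $\sV \times \mathbf{H}^{-\mathbf{s}}$.

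Since the corollary is a purely abstract consequence of Theorem \ref{main}, there is no substantive obstacle; the only points requiring care are the Lipschitz inequality underpinning continuity of $\Phi$ and the exchange of infima that reconciles the contracted rate function with the definition of $\tilde I$ in \eqref{eq:ratefncon}.
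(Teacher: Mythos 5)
Your proposal is correct and follows exactly the paper's route: the paper also deduces the corollary from Theorem \ref{main} by the contraction principle applied to the continuous map $\mu \mapsto \{t \mapsto \mu\circ\pi_t^{-1}\}$ (paired with the identity on $\mathbf{H}^{-\mathbf{s}}$), using the identity $\nu_\Theta(t) = \Theta_{(1)}\circ\pi_t^{-1}$ to match the contracted rate function with $\tilde I$. You have simply filled in the routine details (the Lipschitz bound giving continuity and the exchange of infima) that the paper leaves implicit.
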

When $m=d$ and $\sigma(\mu)$ is invertible, one can give a more explicit representation for the rate function $\tilde I$ as follows. (A similar representation can be found in \cite{Orr} for the case $\sigma = \mbox{Id}$.)
For $\Theta \in \sS(\sZ) \cap \sP_2(\sZ)$ and $V \in \sV$ with $V= \nu_{\Theta}$, define
$$\eta^{\Theta}_t \doteq \Theta \circ (X(t), \sigma(V(t)) v(t) + b(X(t), V(t)))^{-1},$$
where
$v(t) = \int_{\RR^d} y \,\rho_t(dy)$ and $\rho_t$ is obtained from the disintegration of $\rho$ as
$\rho(dt,dy) = \rho_t(dy)\, dt$.
Note that, since $V= \nu_{\Theta}$,  $\eta^{\Theta}_t$ can be disintegrated as $\eta^{\Theta}_t(dx,dy) = \hat \eta^{\Theta}_t(x,dy) \,V(t, dx)$ for some $\hat \eta_t^{\Theta}$.
Define $\tilde I_0: \sV \times   \mathbf{H}^{-\mathbf{s}} \to [0,\infty]$ as
\begin{equation}
	\label{eq:eq11004th}
	\tilde I_0 (V, \sJ) \doteq \inf \left\{ \frac{1}{2} \int_0^T  \left\il V(t), 
	\int_{\RR^d} \left| \sigma^{-1}(V(t))(y- b(\cdot, V(t))\right|^2 \hat\eta^{\Theta}_t(\cdot, dy)
	 \right \ir   dt\right\}, 
\end{equation}
 where the infimum is taken over all $\Theta \in \sS(\sZ) \cap \sP_2(\sZ)$ such that $V= \nu_{\Theta}$ and
 with
 $h(t,x) = \int_{\RR^d} y\, \hat \eta^{\Theta}_t(x,\, dy),$
   $(V, \sJ)$ is  a distributional-sense solution of the equation
 \begin{equation}\label{eq:distlimit}
	\frac{\partial}{\partial t} V + \nabla \cdot h V = 0, \qquad \sJ = hV,  \qquad V(0) = \mu_0, 
\end{equation}
on $(0,T)\times \R^d$. 
Namely, for all $\varphi \in \sC_c^\infty((0,T)\times \R^d, \R)$, 
\[
	\int_0^T \left\il V(t), \frac{\partial}{\partial t}\varphi(t, \cdot) \right\ir \,dt + \int_0^T \left\il  V(t) ,\nabla \varphi(t, \cdot)\cdot h(t, \cdot) \right\ir\,dt = 0, 
\]
and for all $\varphi \in \sC_c^\infty((0,T)\times \R^d, \R^d)$, 
\begin{align}\label{eq:eq602}
	\il \sJ, \varphi \ir &= \int_0^T \left\il  V(t), \varphi(t, \cdot)\cdot h(t, \cdot) \right\ir \,dt.
\end{align}
The following result shows that $\tilde I = \tilde I_0$. The proof is given in Section \ref{sec:ieqtili}. 
\begin{prop}
	\label{lem:sigeqi}
	Suppose that $m=d$, Conditions \ref{maincondition} and \ref{sigmacondition}(i) are satisfied, and $\sigma(\mu)$ is invertible for every $\mu \in \clp_1(\RR^d)$. 
		Then $\tilde I = \tilde I_0$.
\end{prop}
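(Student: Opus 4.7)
The plan is to prove a two-sided inequality. The two expressions look different because $\tilde I$ uses a measure-valued control $\rho$ with cost $\tfrac12\int |y|^2\, \rho(dt,dy)$, while $\tilde I_0$ uses only the effective drift $h(t,x) = \int y\, \hat\eta_t^\Theta(x, dy)$ with cost expressed through $\sigma^{-1}$. Under Condition \ref{sigmacondition}(i) and invertibility of $\sigma(V(t))$, the two constraint sets coincide after conditioning, and the extra degree of freedom in $\rho$ (beyond its mean $v(t) = \int y\, \rho_t(dy)$) can be collapsed via Jensen's inequality, with equality achieved by taking $\rho_t$ to be a Dirac mass at its mean.

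\textbf{Matching the constraint sets.} First I would check that for any $\Theta \in \sS(\sZ) \cap \sP_2(\sZ)$ with $\nu_\Theta = V$, equation \eqref{eq:LimitContSDE} under Condition \ref{sigmacondition}(i) becomes the absolutely continuous evolution $dX(t) = (b(X(t), V(t)) + \sigma(V(t))\,v(t))\, dt$, from which the definition of $\eta_t^\Theta$ gives $h(t, x) = E_\Theta[b(X(t), V(t)) + \sigma(V(t))\,v(t) \mid X(t) = x]$. Applying the chain rule to $\varphi(t, X(t))$ for $\varphi \in \sC_c^\infty((0,T)\times \R^d, \R)$ and taking $\Theta$-expectation then yields the distributional transport equation $\partial_t V + \nabla \cdot hV = 0$ automatically. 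Rewriting the two terms in \eqref{eq:eq457} via the same conditioning shows $G_\varphi(\Theta) = \int_0^T \langle V(t), \varphi(t, \cdot)\cdot h(t, \cdot)\rangle\, dt$, so $\Theta \in \sP^*(\sJ)$ is equivalent to $\sJ = hV$ in the distributional sense of \eqref{eq:eq602}. Hence the constraint sets in the infima defining $\tilde I(V, \sJ)$ and $\tilde I_0(V, \sJ)$ coincide.

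\textbf{Matching the costs and concluding.} Since $m = d$ and $\sigma(V(t))$ is invertible, whenever $y = \sigma(V(t))\,v(t) + b(X(t), V(t))$ we have $\sigma^{-1}(V(t))(y - b(X(t), V(t))) = v(t)$; the definition of $\hat\eta_t^\Theta$ as the regular conditional law of this $y$ given $X(t) = x$ then gives
\[
    \int_{\R^d} \bigl|\sigma^{-1}(V(t))(y - b(x, V(t)))\bigr|^2\, \hat\eta_t^\Theta(x, dy) = E_\Theta\bigl[|v(t)|^2 \bigm| X(t) = x\bigr],
\]
so the $\tilde I_0$ cost at $\Theta$ equals $\tfrac12 E_\Theta[\int_0^T |v(t)|^2\, dt]$. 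Jensen's inequality $|v(t)|^2 \le \int |y|^2\, \rho_t(dy)$ then yields $\tilde I_0 \le \tilde I$. For the reverse bound, given $\Theta$ feasible for $\tilde I_0$, I would construct $\tilde\Theta \doteq \Theta \circ \Psi^{-1}$ under the Borel map $\Psi: \sZ \to \sZ$ defined by $\Psi(\xi, r) = (\xi, \tilde r(r))$ with $\tilde r(r)(dt, dy) = \delta_{v(t; r)}(dy)\, dt$ and $v(t; r) = \int y\, r_t(dy)$. Since $(X, v(\cdot))$ has the same joint law under $\tilde\Theta$ as under $\Theta$, $\tilde\Theta$ inherits all the constraints (same $V$, same $h$, same $\sJ = hV$, and $\tilde\Theta \in \sP_2(\sZ)$ by Jensen again), while its $\tilde I$ cost equals $\tfrac12 E_\Theta[\int_0^T |v(t)|^2\, dt]$, matching the $\tilde I_0$ cost at $\Theta$. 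Taking infima gives $\tilde I \le \tilde I_0$.

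The only nontrivial technical point will be Borel measurability of the disintegration map $r \mapsto \tilde r(r)$ used in the pushforward construction, which follows from standard disintegration arguments for probability measures on Polish spaces; the rest is routine conditioning and Jensen.
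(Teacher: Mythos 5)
Your proposal is correct and follows essentially the same route as the paper: both directions rest on the identity $\int_{\R^d}|\sigma^{-1}(V(t))(y-b(x,V(t)))|^2\,\hat\eta^\Theta_t(x,dy)=E_\Theta[|v(t)|^2\mid X(t)=x]$, Jensen's inequality $|v(t)|^2\le\int|y|^2\rho_t(dy)$ for one bound, and the pushforward $\tilde\Theta=\Theta\circ(X,\tilde\rho)^{-1}$ with $\tilde\rho_t=\delta_{v(t)}$ for the other, exactly as in Section \ref{sec:ieqtili}. Your framing via "coincident constraint sets" is a slightly cleaner packaging of what the paper verifies separately in each direction, but the substance is identical.
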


\section{Laplace Asymptotics and Variational Representation}\label{sec:laplace}
Using the well-known equivalence (cf. \cite{BudDup_book, DupEll}) between the large deviation upper bound (resp. lower bound) and the Laplace upper bound (resp. lower bound), we will prove Theorem \ref{main} by establishing a Laplace principle on the space $\sP_1(\sX) \times \mathbf{H}^{-\mathbf{s}}$. Specifically, Theorem \ref{main}(i) will follow from the upper bound 
\begin{equation}\label{eq:Upper}
	\liminf_{N \to \infty} -\frac{1}{a_N} \log E\left[ e^{-a_N F\left(\mu^N, \sJ^N\right)} \right] \ge \inf_{(\mu, \sJ) \in \sP_1(\sX)  \times \mathbf{H}^{-\mathbf{s}}} \left( F( \mu, \sJ) + I(\mu, \sJ) \right), 
\end{equation}
and Theorem \ref{main}(ii) will follow from the lower bound 
\begin{equation}\label{eq:Lower}
	\limsup_{N \to \infty} -\frac{1}{a_N} \log E\left[ e^{-a_N F\left(\mu^N, \sJ^N\right)} \right] \le \inf_{(\mu, \sJ) \in \sP_1(\sX) \times \mathbf{H}^{-\mathbf{s}}} \left( F(\mu, \sJ) + I(\mu, \sJ) \right), 
\end{equation}
where $F$  is any  bounded, continuous function on $\sP_1(\sX) \times \mathbf{H}^{-\mathbf{s}}$. 

The inequality \eqref{eq:Upper} will be proved in Section \ref{section:upper} (under Conditions \ref{maincondition} and \ref{initialcondition}), and the inequality \eqref{eq:Lower} will be proved in Section \ref{section:lower} (under Conditions \ref{maincondition}, \ref{initialcondition}, and \ref{sigmacondition}). The rate function property of $I$ is shown in Section \ref{section:ratefunction}.
The starting point for both upper and lower bounds is the following variational representation.

\subsection{Variational Representation}
\label{sec:varrep}

Let $\sA_N$ denote the class of $\R^{Nm}$-valued  $\sF(t)$-progressively measurable processes $u$ such that $E\left[\int_0^T |u(t)|^2\,dt\right] < \infty$. 
For  $u^N = (u_1^N, \ldots, u_N^N) \in \sA_N$, with each $u_j^N(t)$ taking values in $\R^m$, consider the controlled version of \eqref{eq:model} given as
\begin{equation}\label{eq:controlledmodel}
	d\bar{X}_j^N(t) = b\left(\bar{X}_j^N(t), \bar{V}^N(t) \right)\,dt + \eps_N \sigma\left(\bar{X}_j^N(t), \bar{V}^N(t) \right)\,dW_j(t) + \sigma\left(\bar{X}_j^N(t), \bar{V}^N(t) \right)u_j^N(t)\,dt, 
\end{equation}
where $\bar X_j^N(0) = x_j^N$ and 
\[
	\bar{V}^N(t) \doteq \frac{1}{N}\sum_{j=1}^N \delta_{\bar{X}_j^N(t)}, \qquad 0 \le t \le T. 
\]
Analogous to \eqref{eq:EmpMzrmu}, $\bar \mu^N$ will denote the empirical measure of $(\bar X_1^N, \ldots, \bar X_N^N)$, so that $\bar \mu^N \circ \pi^{-1}_t = \bar V^N(t)$ for each $0\le t \le T$. 
We will also need a controlled analogue of the stochastic current in Theorem \ref{JNpathwiserealization}.
 For $\varphi \in \sC_c^\infty(U\times\R^d, \R^d)$, define
\begin{equation}\label{eq:Jbarj}
	\bar J_j^N(\varphi) \doteq \int_0^T \varphi\left(t, \bar X_j^N(t)\right)\circ d \bar X_j^N(t), \qquad 
	\bar J^N(\varphi)  \doteq \frac{1}{N}\sum_{j=1}^N \bar J_j^N(\varphi).
\end{equation}
The proof of the following result, which is given in the Appendix, is similar to that of Theorem \ref{JNpathwiserealization}.
\begin{lemma}\label{JbarReal}
 Suppose that Conditions \ref{maincondition} and \ref{initialcondition} hold.
 Then, for each $N\in \N$, $1\le j \le N$, and $\mathbf{s} \in \sO_d$, there is a nonnegative square-integrable random variable $C^N_{j,\mathbf{s}}$ such that for all $\varphi \in \sC_c^\infty(U \times\R^d, \R^d)$, 
\[
	\left| \bar J_j^N(\varphi) \right| \le C^N_{j,\mathbf{s}} \|\varphi\|_{\mathbf{s}} \qquad \mbox{a.s.}
\]
In particular, the collection $\{\varphi \mapsto \bar J^N(\varphi)\}$ has a pathwise realization $\bar \sJ^N$ on $(\Omega, \sF, P)$, namely
$\bar \sJ^N$ is an $\mathbf{H}^{-\mathbf{s}}$-valued random variable such that 
 $\il\bar \sJ^N(\omega), \varphi\ir = [\bar J^N(\varphi)](\omega)$ for a.e. $\omega \in \Omega$ and all $\varphi \in \sC^\infty_c(U \times \R^d, \R^d)$. 
Furthermore, if 
\begin{equation}\label{eq:eq525}
	\sup_{N \ge 1} E\left[ \frac{1}{N}\sum_{j=1}^N \int_0^T \left|u_j^N(t)\right|^2\,dt \right] < \infty, 
\end{equation}
then $\sup_{N \ge 1} E\left[ \frac{1}{N}\sum_{j=1}^N \left(C^N_{j, \mathbf{s}} \right)^2 \right] < \infty. $
In particular, if 
$C^N_{\mathbf{s}} \doteq \frac{1}{N}\sum_{j=1}^N C^N_{j, \mathbf{s}}$, then
$\sup_{N \ge 1} E\left[ \left( C^N_{\mathbf{s}} \right)^2 \right] < \infty$.
\end{lemma}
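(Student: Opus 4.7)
The plan is to expand $\bar J_j^N(\varphi)$ via the Stratonovich-to-It\^o correction into four terms and bound each by $\|\varphi\|_{\mathbf{s}}$ times a square-integrable random prefactor. The analytical backbone is the Sobolev embedding $\mathbf{H}^{\mathbf{s}}\hookrightarrow \sC([0,T],\sC^1_b(\R^d,\R^d))$, obtained by combining $H^{s_1}(U)\hookrightarrow \sC(\bar U)$ (valid since $s_1>1/2$) with $H^{s_2}(\R^d,\R^d)\hookrightarrow \sC^1_b(\R^d,\R^d)$ (valid since $s_2>d/2+1$); this yields a constant $K_{\mathbf{s}}$ with $\|\varphi\|_\infty+\|\nabla_x\varphi\|_\infty\le K_{\mathbf{s}}\|\varphi\|_{\mathbf{s}}$.

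Writing $\bar J_j^N(\varphi)=I_1+I_2+I_3+I_4$, with $I_1=\int_0^T\varphi(t,\bar X_j^N(t))\cdot b(\bar X_j^N(t),\bar V^N(t))\,dt$, $I_2=\int_0^T\varphi(t,\bar X_j^N(t))\cdot \sigma(\bar X_j^N(t),\bar V^N(t))u_j^N(t)\,dt$, $I_3=\eps_N\int_0^T\varphi(t,\bar X_j^N(t))\cdot\sigma(\bar X_j^N(t),\bar V^N(t))\,dW_j(t)$, and $I_4=\tfrac{\eps_N^2}{2}\int_0^T\mathrm{tr}[(\nabla_x\varphi)(t,\bar X_j^N(t))\,\sigma\sigma^\top(\bar X_j^N(t),\bar V^N(t))]\,dt$, the three non-stochastic pieces admit clean pathwise bounds $|I_k|\le K_{\mathbf{s}}\|\varphi\|_{\mathbf{s}}R_k$: invoke \eqref{eq:eq208} for $I_1$, boundedness of $\sigma$ plus Cauchy-Schwarz for $I_2$, and boundedness of $\sigma$ with $\eps_N\le 1$ for $I_4$. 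Each $R_k$ is $L^2$-integrable, and uniform-in-$N$ $L^2$ bounds on $N^{-1}\sum_j R_k^2$ follow under \eqref{eq:eq525} from a coupled Gronwall argument applied to \eqref{eq:controlledmodel}---first one controls $N^{-1}\sum_j \|\bar X_j^N\|_\infty^2$ collectively (because $b$ depends on $\bar V^N$) and then returns to individual particles---using Conditions \ref{maincondition} and \ref{initialcondition}(ii).

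The stochastic integral $I_3$ is handled by an orthonormal-basis trick. Fix an orthonormal basis $\{\psi_n\}\subset\sC_c^\infty(U\times\R^d,\R^d)$ of $\mathbf{H}^{\mathbf{s}}$ (possible by density) and set $R_{j,\mathbf{s}}^N\doteq(\sum_n|I_3(\psi_n)|^2)^{1/2}$. For each fixed $\varphi$, continuity of $\varphi\mapsto I_3(\varphi)$ into $L^2(P)$ gives the a.s.\ identity $I_3(\varphi)=\sum_n\langle\varphi,\psi_n\rangle_{\mathbf{s}}I_3(\psi_n)$, and Cauchy-Schwarz yields $|I_3(\varphi)|\le R_{j,\mathbf{s}}^N\|\varphi\|_{\mathbf{s}}$ a.s. By It\^o isometry and Fubini,
$$E\big[(R_{j,\mathbf{s}}^N)^2\big]=\sum_n E|I_3(\psi_n)|^2 \le \eps_N^2 L^2\,E\int_0^T \sum_n|\psi_n(t,\bar X_j^N(t))|^2\,dt.$$
The pointwise sum $\sum_n|\psi_n(t,x)|^2$ is the diagonal of the reproducing kernel of $\mathbf{H}^{\mathbf{s}}$, hence uniformly bounded in $x$ (by spatial translation invariance of the norm) and uniformly in $t$ on the compact $[0,T]\subset U$; this uniform boundedness is precisely what the embedding $\mathbf{H}^{\mathbf{s}}\hookrightarrow L^\infty$ (guaranteed by $\mathbf{s}\in\sO_d$) provides. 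Hence $E[(R_{j,\mathbf{s}}^N)^2]<\infty$, uniformly in $N$ once \eqref{eq:eq525} holds.

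Setting $C_{j,\mathbf{s}}^N\doteq K_{\mathbf{s}}(R_1+R_2+R_4)+R_{j,\mathbf{s}}^N$ gives the stated bound, and the pathwise realization $\bar \sJ^N$ follows by standard continuous extension from the dense subspace $\sC_c^\infty(U\times\R^d,\R^d)\subset\mathbf{H}^{\mathbf{s}}$, defining $\bar \sJ^N\doteq N^{-1}\sum_j \bar \sJ_j^N$. The main obstacle I anticipate is the coupled Gronwall bookkeeping needed for uniform-in-$N$ $L^2$ moments of $\bar X_j^N$ in the controlled system (since $b$ depends on $\bar V^N$, direct Gronwall on individual particles does not close), together with a careful verification that the reproducing-kernel diagonal sum is genuinely uniform in $(t,x)\in [0,T]\times\R^d$, which uses the precise threshold $s_1>1/2$ (continuity of the kernel in $t$) and $s_2>d/2$ (boundedness of the spatial evaluation functional).
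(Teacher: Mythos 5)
Your proposal is correct, but it proves the lemma by a genuinely different route than the paper. The paper works with the (continuous) Fourier system $e^k_{n,\xi}$ on $U\times\R^d$: it expands $\varphi$ in its Fourier coefficients, writes $\bar J_j^N(\varphi)=\sum_k\sum_n\int \hat\varphi_k(n,\xi)Z^N_{j,k}(n,\xi)\,d\xi$ with $Z^N_{j,k}(n,\xi)=\int_0^T e^k_{n,\xi}(t,\bar X_j^N(t))\circ d\bar X_j^N(t)$, and applies Cauchy--Schwarz against the weight $(1+n^2)^{s_1}(1+|\xi|^2)^{s_2}$, so that $C^N_{j,\mathbf{s}}$ is the weighted $L^2$ norm of the $Z$'s; all four pieces of the Stratonovich decomposition are then estimated at once inside $E|Z^N_{j,k}(n,\xi)|^2$ via Burkholder--Davis--Gundy, the linear growth of $b$, boundedness of $\sigma$, and Lemma \ref{L2bound}, with square-summability coming from $\sum_n\int(1+n^2)^{-s_1}(1+|\xi|^2)^{-s_2}(1+|\xi|^2)\,d\xi<\infty$ (this is where $s_1>1/2$ and $s_2>d/2+1$ enter). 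You instead treat the three Lebesgue-integral terms directly through the embedding $\mathbf{H}^{\mathbf{s}}\hookrightarrow\sC(\bar U,\sC^1_b)$ and reserve a basis expansion only for the martingale term, bounding the reproducing-kernel diagonal by the same embedding constant. Both arguments rest on the same analytic facts (the thresholds in $\sO_d$ and the a priori moment bounds from Lemma \ref{L2bound}, including the collective-then-individual Gronwall step you flag), and your version arguably makes the role of the thresholds more transparent, while the paper's version yields the explicit Fourier-type formula for $C^N_{j,\mathbf{s}}$ that it reuses elsewhere. Two small points to tighten: for the It\^o-isometry step you should bound $|\psi_n(t,\bar X_j^N(t))^{\mathsf T}\sigma|$ rather than invoke $\sum_n|\psi_n(t,x)|^2$ alone (harmless, since $|\sigma|\le L$), and the final passage from the family of $\varphi$-wise a.s.\ bounds to a single $\mathbf{H}^{-\mathbf{s}}$-valued random variable is not a pathwise continuous extension (the null sets depend on $\varphi$); it requires the countable-dense-set argument of Flandoli et al., which is exactly what the paper cites at this point.
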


The following variational representation follows from  \cite{BouDup, BudDup} (see also \cite{BudDupFis}).
Specifically, the case where $\{\sF(t)\}$ is the filtration generated by the $m$-dimensional Brownian motions $\{W_j\}$ is covered in \cite{BouDup}, while the setting of a general filtration is treated in
\cite{BudDup}. Recall that  $a_N = N /\eps_N^2$. 
\begin{theorem}[Variational Representation]\label{representation} 
	Suppose that Conditions \ref{maincondition} and \ref{initialcondition} hold. Let $\mathbf{s} \in \sO_d$ and let $F$ be a real-valued, bounded, continuous function on $\sP_1(\sX)  \times \mathbf{H}^{-\mathbf{s}}$.
Then for each $N \in \N$,
\begin{equation}
	-\frac{1}{a_N} \log E\left[ e^{-a_NF\left(\mu^N, \sJ^N \right)} \right] = \inf_{u^N \in \sA_N} E\left[ \frac{1}{2N} \sum_{j=1}^N \int_0^T \left|u_j^N(t)\right|^2\,dt + F\left(\bar \mu^N, \bar \sJ^N \right) \right]. 
\end{equation}
\end{theorem}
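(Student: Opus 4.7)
The plan is to apply the known Brownian variational representation of \cite{BouDup, BudDup} directly to the $Nm$-dimensional driving noise $\tilde W \doteq (W_1, \ldots, W_N)$ and the bounded measurable functional $G^N \doteq a_N F(\mu^N, \sJ^N)$, and then to reduce to the stated form by a simple rescaling of the control that converts the Cameron--Martin shift of $\tilde W$ into the drift of the controlled SDE \eqref{eq:controlledmodel}. Since $F$ is bounded and continuous on $\sP_1(\sX)\times \mathbf{H}^{-\mathbf{s}}$ and since Theorem \ref{JNpathwiserealization} produces $\sJ^N$ as an $\mathbf{H}^{-\mathbf{s}}$-valued random variable obtained pathwise from the strong solution $X^N$, the composition $G^N$ is a bounded, Borel measurable functional of $\tilde W$ adapted to $\{\sF(t)\}$, so the Boué--Dupuis-type representation applies verbatim and yields
\begin{equation*}
-\log E\bigl[e^{-G^N(\tilde W)}\bigr] \;=\; \inf_{v \in \tilde \sA_N} E\!\left[\tfrac{1}{2}\!\int_0^T |v(t)|^2\,dt \;+\; G^N(\tilde W^v)\right],
\end{equation*}
where $v = (v_1, \ldots, v_N)$ ranges over $\sF(t)$-progressively measurable $\R^{Nm}$-valued controls with $E\!\int_0^T|v|^2\,dt<\infty$ and $\tilde W^v(t) \doteq \tilde W(t) + \int_0^t v(s)\,ds$.

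Next I would perform the rescaling $u_j^N \doteq \eps_N v_j$, which is a bijection between $\tilde\sA_N$ and $\sA_N$. Substituting $\tilde W^v$ for $\tilde W$ in the defining SDE \eqref{eq:model} gives the system
\begin{equation*}
X_j(t) = x_j^N + \int_0^t b(X_j, V)\,ds + \eps_N\!\int_0^t \sigma(X_j, V)\,dW_j + \eps_N\!\int_0^t \sigma(X_j, V)\,v_j\,ds,
\end{equation*}
which after the substitution coincides with the controlled system \eqref{eq:controlledmodel} defining $\bar X_j^N$. Strong uniqueness under Condition \ref{maincondition} then identifies the shifted particle empirical measure $\mu^N(\tilde W^v)$ with $\bar\mu^N$, and by Lemma \ref{JbarReal} the pathwise realization $\sJ^N(\tilde W^v)$ coincides a.s.~with $\bar\sJ^N$ (the construction of the pathwise realization in Lemma \ref{JbarReal} does not depend on the particular form of the driving noise, only on the SDE structure). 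The cost transforms as $\tfrac{1}{2}|v(t)|^2 = \tfrac{1}{2\eps_N^2}\sum_j |u_j^N(t)|^2$.

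Combining these pieces and dividing through by $a_N = N/\eps_N^2$ yields
\begin{equation*}
-\frac{1}{a_N}\log E\bigl[e^{-a_N F(\mu^N, \sJ^N)}\bigr] = \inf_{u^N \in \sA_N} E\!\left[\frac{1}{2N}\sum_{j=1}^N\int_0^T |u_j^N(t)|^2\,dt + F(\bar\mu^N, \bar\sJ^N)\right],
\end{equation*}
which is the desired identity.

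The only nontrivial step is the identification $\sJ^N(\tilde W^v) = \bar\sJ^N$, i.e.\ verifying that the pathwise realization of the stochastic current commutes with the Cameron--Martin shift of the driving Brownian motion. This is the place where one must rely on the construction from Lemma \ref{JbarReal}, which produces $\bar\sJ^N$ as a measurable functional of $(x_j^N, W_j, u_j^N)_{j=1}^N$ via uniform-in-$\varphi$ bounds holding $\omega$-by-$\omega$; since the controlled and shifted versions satisfy the same SDE, their pathwise realizations agree almost surely. Everything else is bookkeeping around the $\eps_N$-scaling.
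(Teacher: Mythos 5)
Your proposal is correct and follows essentially the same route as the paper, which simply invokes the variational representation of \cite{BouDup, BudDup} for the $Nm$-dimensional driving Brownian motion and absorbs $\eps_N$ into the control. Your rescaling $u_j^N = \eps_N v_j$, the identification of the shifted system with \eqref{eq:controlledmodel} via strong uniqueness, and the division by $a_N = N/\eps_N^2$ are exactly the intended bookkeeping, and your flagging of the a.s.\ identification of the pathwise realization $\bar\sJ^N$ as the one point requiring care is apt.
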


\subsection{Tightness Properties.}
\label{sec:tightprop}
The following lemma gives a key tightness property that will be needed in the proofs of both upper and lower Laplace bounds. The proof is given in Section \ref{sectiontightnessproof}.
\begin{lemma}\label{keytightness} 
	Suppose  Conditions \ref{maincondition} and \ref{initialcondition} hold.
	Fix $\mathbf{s} \in \sO_d$, and let $\{u^N, N \in \N\}$ with $u_N \in \mathcal{A}_N$ for each $N$ be such that 
	\[
		\sup_{N\ge1} E\left[ \frac{1}{N}\sum_{j=1}^N \int_0^T \left|u_j^N(t) \right|^2\,dt \right] < \infty.
	\]
	Let $\bar X_j^N$,  $\bar \mu^N$, and $\bar \sJ^N$ be the controlled sequences corresponding to sequence of controls $\{u^N\}$ as defined in Section \ref{sec:varrep}. 
	For each $j$ and $N$, let $\rho_j^N$ be the $\sR_1$-valued random variable given as
	\[
		\rho_j^N(dt, dy) \doteq \delta_{u_j^N(t)}(dy)\,dt, 
	\]
	and consider the sequence of $\sP(\sZ)$-valued random variables defined as
	\begin{equation}\label{eq:occupation}
		Q^N \doteq \frac{1}{N} \sum_{j=1}^N \delta_{\left(\bar X_j^N,  \rho_j^N \right) }, \qquad N \in \N.
	\end{equation}
Then, 
\begin{enumerate}[(i)]

\item The sequence $\{(\bar \mu^N, Q^N, \bar \sJ^N), N \in \N\}$ is tight in $\sP_1(\sX)\times\sP(\sZ) \times \mathbf{H}^{-\mathbf{s}}$, 

\item If $(\bar \mu^N, Q^N, \bar\sJ^N) \Rightarrow (\bar \mu, Q, \bar\sJ)$ as $N \to \infty$ in $\sP_1(\sX)\times \sP(\sZ) \times \mathbf{H}^{-\mathbf{s}}$, 
then $Q_{(1)} = \bar \mu$ and 
$Q \in \sP^*(\bar\sJ)$ a.s. 
\end{enumerate}
\end{lemma}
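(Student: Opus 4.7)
The strategy is to first obtain uniform second-moment control on the controlled particles, use it for tightness of the first two coordinates, then handle the main obstacle—norm tightness of $\bar\sJ^N$—via cutoffs and a compact Sobolev embedding, and finally identify limit points by Itô's formula and a martingale-problem type characterization. Applying Cauchy–Schwarz to $\int_0^t\sigma(\cdot,\cdot)u_j^N\,ds$, using boundedness of $\sigma$, the linear growth of $b$, Condition~\ref{initialcondition}(ii), the $L^2$-control bound, Doob's inequality for the Brownian term, and Gronwall yields $\sup_N E\bigl[\tfrac{1}{N}\sum_j\|\bar X_j^N\|_\infty^2\bigr]<\infty$. From this: the mean measures of $\bar\mu^N$ are tight in $\sP(\sX)$ via a standard Kolmogorov modulus-of-continuity estimate (drift and control contributing $O(t-s)$ in $L^2$, Brownian $O(\eps_N^2(t-s))$), and $\|x\|_\infty$ is uniformly integrable under $\bar\mu^N$, giving tightness in $\sP_1(\sX)$. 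For $Q^N$, the $\sX$-marginals are tight as above, and the $\sR_1$-marginals are tight because $\sup_N\tfrac{1}{N}\sum_j E[\int|u_j^N|^2\,dt]<\infty$ concentrates the laws of $\rho_j^N$ on $\{r:\int|y|^2\,r\le M\}$, a set that is precompact in $\sR_1$ since bounded second $y$-moments yield both weak tightness in $\sP$ and uniform integrability of $|y|$.

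\textbf{Norm-tightness of $\bar\sJ^N$.} This is the main obstacle. Pick $\mathbf{s}_0\in\sO_d$ with each coordinate strictly less than that of $\mathbf{s}$; by Lemma~\ref{JbarReal}, $\sup_N E[(C_{\mathbf{s}_0}^N)^2]<\infty$, so $\{\|\bar\sJ^N\|_{-\mathbf{s}_0}\}$ is bounded in probability. Choose a smooth radial cutoff $\chi_R$ equal to $1$ on $B_R$ and $0$ off $B_{2R}$, and decompose $\bar\sJ^N=\chi_R\bar\sJ^N+(1-\chi_R)\bar\sJ^N$ via $\il\chi_R\bar\sJ^N,\varphi\ir=\il\bar\sJ^N,\chi_R\varphi\ir$. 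Since multiplication by $\chi_R$ is bounded on $\mathbf{H}^{\mathbf{s}_0}$, one has $\|\chi_R\bar\sJ^N\|_{-\mathbf{s}_0}\le C_R\,C_{\mathbf{s}_0}^N$, and $\chi_R\bar\sJ^N$ is supported in the fixed compact set $[0,T]\times\bar B_{2R}$; by the Rellich-type compact embedding of Lemma~\ref{compactembedding}, bounded subsets of $\mathbf{H}^{-\mathbf{s}_0}$ of distributions supported on this compact set are relatively compact in $\mathbf{H}^{-\mathbf{s}}$, so $\{\chi_R\bar\sJ^N\}_N$ is tight in $\mathbf{H}^{-\mathbf{s}}$ for each $R$. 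For the tail, $\bar J_j^N((1-\chi_R)\varphi)=0$ whenever $\|\bar X_j^N\|_\infty\le R$, and Lemma~\ref{JbarReal} yields the pathwise bound
\[
\|(1-\chi_R)\bar\sJ^N\|_{-\mathbf{s}}\le C\cdot\frac{1}{N}\sum_{j=1}^N C_{j,\mathbf{s}}^N\mathbf{1}_{\{\|\bar X_j^N\|_\infty>R\}},
\]
and two applications of Cauchy–Schwarz combined with the Step~1 moment bound and $\sup_N\tfrac{1}{N}\sum_j E[(C_{j,\mathbf{s}}^N)^2]<\infty$ give $\lim_{R\to\infty}\sup_N E\bigl[\|(1-\chi_R)\bar\sJ^N\|_{-\mathbf{s}}\bigr]=0$. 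Combining these pieces delivers tightness of $\{\bar\sJ^N\}$ in the norm topology of $\mathbf{H}^{-\mathbf{s}}$, completing (i).

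\textbf{Characterization of limits (part (ii)).} Along a convergent subsequence, continuity of $\Theta\mapsto\Theta_{(1)}$ and $Q^N_{(1)}=\bar\mu^N$ give $Q_{(1)}=\bar\mu$ a.s., and Fatou applied to $\int|y|^2\,\rho$ with the uniform $L^2$-bound on $u^N$ yields $Q\in\sP_2(\sZ)$ a.s. For $Q\in\sS(\sZ)$, for $\varphi\in\sC_c^\infty(\R^d)$, $V\in\sV$, and $(\xi,r)\in\sZ$ define
\[
F_\varphi^V(\xi,r)\doteq\varphi(\xi(t))-\varphi(\xi(0))-\int_0^t\nabla\varphi(\xi(s))\cdot b(\xi(s),V(s))\,ds-\int_0^t\!\!\int\nabla\varphi(\xi(s))\cdot\sigma(\xi(s),V(s))\,y\,r_s(dy)\,ds.
\]
Itô's formula applied to \eqref{eq:controlledmodel} gives $F_\varphi^{\bar V^N}(\bar X_j^N,\rho_j^N)=\eps_N\int_0^t\nabla\varphi\cdot\sigma\,dW_j+\tfrac{\eps_N^2}{2}\int_0^t\mathrm{tr}(\sigma\sigma^T D^2\varphi)\,ds$, hence $E\bigl[\int|F_\varphi^{\bar V^N}|^2\,dQ^N\bigr]=O(\eps_N^2)\to0$. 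Passing to the limit via Skorohod representation, the convergence $\bar V^N\to\nu_Q$ in $\sV$, continuity of $V\mapsto F_\varphi^V$ on bounded sets, and Fatou gives $F_\varphi^{\nu_Q}(X,\rho)=0$ for $Q$-a.e.\ $(X,\rho)$; a countable dense family of $\varphi$ then identifies $Q$ as a weak solution of \eqref{eq:LimitContSDE}, so $Q\in\sS(\sZ)$. Finally, writing $\bar J^N(\varphi)$ in Itô form through $d\bar X_j^N=b\,ds+\sigma u_j^N\,ds+\eps_N\sigma\,dW_j$, the martingale piece is $O(\eps_N)$ and the Itô–Stratonovich correction is $O(\eps_N^2)$ in $L^2$, so $\bar J^N(\varphi)=\int_\sZ[\int_0^T\varphi\cdot b(\xi,\bar V^N)\,dt+\int\varphi\cdot\sigma(\xi,\bar V^N)y\,r(dt,dy)]\,Q^N(d(\xi,r))+o(1)\to G_\varphi(Q)$; combined with $\il\bar\sJ^N,\varphi\ir=\bar J^N(\varphi)$ and norm convergence in $\mathbf{H}^{-\mathbf{s}}$ (which implies $\il\bar\sJ^N,\varphi\ir\to\il\bar\sJ,\varphi\ir$), one concludes $\il\bar\sJ,\varphi\ir=G_\varphi(Q)$, completing the verification that $Q\in\sP^*(\bar\sJ)$.
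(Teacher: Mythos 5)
Your proposal is correct and follows essentially the same route as the paper's proof: uniform second-moment bounds via Gronwall plus a stopping-time criterion for tightness of $\bar \mu^N$ and $Q^N$, a cutoff decomposition of $\bar \sJ^N$ into a compactly supported piece bounded in a weaker dual norm (handled by the Rellich-type embedding of Lemma \ref{compactembedding}) plus a tail that vanishes uniformly in $N$, and identification of limit points via It\^o's formula, a martingale-problem argument, and the $O(\eps_N)$ estimates on the noise and Stratonovich-correction terms. The only spot that is terser than the paper is the passage to the limit in $E_{Q^N}\bigl[|F_\varphi^{\bar V^N}|\bigr]$ and in the analogous current functional, where "Fatou" alone does not suffice since the integrands are unbounded on $\sZ\times\sV$; one must truncate $b$ and $y$ (the paper's $\eta_B,\psi_B$) and use the uniform second-moment bounds to control the truncation error uniformly in $N$, which your "continuity on bounded sets" phrasing implicitly requires.
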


\subsection{Proof of the Upper Bound}\label{section:upper}
In this section we prove part (i) of Theorem \ref{main} by showing that \eqref{eq:Upper} holds. Assume Conditions \ref{maincondition} and \ref{initialcondition}.
Fix  $\mathbf{s} = (s_1, s_2) \in \sO_d$, and a  real-valued, bounded, continuous function 
$F$ on $\sP_1(\sX) \times \mathbf{H}^{-\mathbf{s}}$. 
Let $\eps \in (0,1)$, and  using Theorem \ref{representation} choose $\{u^N, N \in \N\}$ with $u_N \in \mathcal{A}_N$ for each $N$ such that 
\begin{equation}\label{eq:upperbndeps}
	-\frac{1}{a_N} \log E\left[ e^{-a_NF\left(\mu^N, \sJ^N \right)} \right] \ge E\left[ \frac{1}{2N} \sum_{j=1}^N \int_0^T \left|u_j^N(t)\right|^2\,dt + F\left(\bar \mu^N,  \bar \sJ^N \right) \right] - \eps, 
\end{equation}
where $(\bar \mu^N,  \bar \sJ^N)$ are controlled variables corresponding to the control $u^N$ as defined in Section \ref{sec:varrep}.
From the boundedness of $F$ it follows that
\[
	\sup_{N\ge 1} E\left[ \frac{1}{2N} \sum_{j=1}^N \int_0^T \left| u_j^N(t) \right|^2\,dt \right] \le 2\sup_{(\mu, \sJ)\in \sP_1(\sX) \times \mathbf{H}^{-\mathbf{s}}} \left|F(\mu, \sJ)\right| + 1 < \infty. 
\]
By Lemma \ref{keytightness}, $(\bar \mu^N, Q^N, \bar\sJ^N)$ is tight in $\sP_1(\sX)\times \sP(\sZ) \times \mathbf{H}^{-\mathbf{s}}$.
Thus the sequence $(\bar \mu^N, Q^N, \bar\sJ^N)$ has a weak limit point $(\bar \mu, Q, \bar\sJ)$ along some subsequence, and once again by  Lemma \ref{keytightness}, $Q \in \sP^*(\sJ)$ and  $Q_{(1)} = \bar \mu$ a.s. Assume without loss of generality that $(\bar \mu^N, Q^N, \bar \sJ^N) \Rightarrow (\bar \mu, Q, \bar\sJ)$ along the full sequence. 
Noting that $Q_{(1)}^N = \bar \mu^N$,   
we have, by \eqref{eq:upperbndeps}, 
\[
	-\frac{1}{a_N} \log E\left[ e^{-a_NF\left(\mu^N, \sJ^N \right)} \right] \ge E\left[ \frac{1}{2} \int_{\sR_1} \int_{[0,T]\times \R^m} |y|^2\,r(dt,dy)\, Q_{(2)}^N(dr) + F\left(Q_{(1)}^N, \bar \sJ^N \right) \right] - \eps.
\]
By Fatou's lemma and lower semicontinuity of the map $r \mapsto \int_{[0,T]\times\R^m}  |y|^2\,r(dt,dy)$ on $\sR_1$,
\begin{align*}
	&\liminf_{N \to \infty} -\frac{1}{a_N} \log E\left[ e^{-a_NF\left(\mu^N, \sJ^N \right)} \right] \\
	&\ge E\left[ \frac{1}{2} \int_{\sR_1} \int_{[0,T]\times\R^m} |y|^2\,r(dt,dy)\, Q_{(2)}(dr) + F\left(Q_{(1)}, \bar\sJ \right) \right] - \eps \\
	&= E\left[ E_Q\left[\frac{1}{2}  \int_{[0,T]\times\R^m} |y|^2\,\rho(dt,dy)\right] + F\left(\bar \mu, \bar\sJ \right) \right] - \eps \\
		&\ge \inf_{(\mu, \sJ) \in \sP_1(\sX) \times \mathbf{H}^{-\mathbf{s}}} \left( I(\mu, \sJ) + F(\mu, \sJ) \right) - \eps,
\end{align*}
where the last line follows on recalling the definition of $I$ and the facts that $Q \in \sP^*(\sJ)$ and  $Q_{(1)} = \bar \mu$ a.s.
Since $\eps \in (0,1)$ is arbitrary, this completes the proof of the upper bound in \eqref{eq:Upper}
and thus that of Theorem \ref{main}(i). \hfill\qed

\subsection{Proof of the Lower Bound}\label{section:lower}
In this section we prove part (ii) of Theorem \ref{main} by showing \eqref{eq:Lower}. 
Fix  $\mathbf{s} = (s_1, s_2) \in \sO_d$. We assume Conditions  \ref{maincondition}, \ref{initialcondition}, and \ref{sigmacondition} hold.
Let $\eps \in (0,1)$ and choose $(\Theta_0, \sJ_0) \in \sP(\sZ) \times \mathbf{H}^{-\mathbf{s}}$ such that $\Theta_0 \in \sP^*(\sJ_0)$
and 
\begin{equation}\label{eq:eq609}
	E_{\Theta_0} \left[ \frac{1}{2} \int_{[0,T]\times\R^m} |y|^2\,\rho(dt,dy) \right] + F\left((\Theta_0)_{(1)}, \sJ_0\right) \le \inf_{(\mu, \sJ) \in \sP_1(\sX) \times \mathbf{H}^{-\mathbf{s}}}\left( I(\mu, \sJ) + F(\mu, \sJ) \right) + \eps. 
\end{equation}
To prove the lower bound we will construct a sequence $\{u^N\}$ of controls on some filtered probability space such that $u^N \in \sA_N$ for each $N$ and 
\begin{equation}\label{eq:specialuN}
\begin{aligned}
	&\limsup_{N\to\infty} E\left[ \frac{1}{2N} \sum_{j=1}^N \int_0^T \left|u_j^N(t)\right|^2\,dt + F\left(\bar \mu^N, \bar \sJ^N\right) \right] \\
	&\le E_{\Theta_0} \left[ \frac{1}{2} \int_{[0,T]\times \R^m} |y|^2\,\rho(dt,dy) \right] + F\left((\Theta_0)_{(1)}, \sJ_0\right),
\end{aligned}
\end{equation}
where $\bar \mu^N$ and $\bar \sJ^N$ are the controlled processes corresponding to $\{u^N\}$.
It will then follow by Theorem \ref{representation} and \eqref{eq:eq609} that
\begin{align*}
	\limsup_{N \to\infty} -\frac{1}{a_N} \log E\left[ e^{-a_N F\left(\mu^N, \sJ^N\right)} \right] &\le \limsup_{N \to\infty} E\left[ \frac{1}{2N} \sum_{j=1}^N \int_0^T \left| u_j^N(t) \right|^2\,dt + F\left(\bar \mu^N, \bar \sJ^N \right) \right] \\
	&\le \inf_{(\mu, \sJ) \in \sP_1(\sX)  \times \mathbf{H}^{-\mathbf{s}}}\left( I(\mu, \sJ) + F(\mu, \sJ) \right) + \eps.
\end{align*}
 Since $\eps > 0$ is arbitrary, 
the lower bound follows. 

The construction of a sequence $\{u^N\}$ such that
 the inequality in \eqref{eq:specialuN} holds will need the following uniqueness property.
\begin{definition} Let $\theta : \sZ \to \R^d\times \sR_1$ denote the map $\theta(\xi, r) = (\xi(0), r)$. We say that weak uniqueness of solutions of \eqref{eq:LimitContSDE} holds if $\Theta_1, \Theta_2 \in \sS(\sZ) \cap \sP_2(\sZ)$ and $\Theta_1 \circ \theta^{-1} = \Theta_2 \circ \theta^{-1}$ implies that $\Theta_1 = \Theta_2$. 
\end{definition}

The following lemma is key to the proof of the lower bound. The proof is provided in Section \ref{sectionweakuniq}. 
Recall that in this section we assume that Conditions  \ref{maincondition}, \ref{initialcondition}  and \ref{sigmacondition} hold.
\begin{lemma}\label{lem:weakuniq} Weak uniqueness of solutions holds for \eqref{eq:LimitContSDE}. 
\end{lemma}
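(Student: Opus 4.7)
The plan is to exploit Condition \ref{sigmacondition}(i) to reduce \eqref{eq:LimitContSDE} to a pathwise deterministic ODE in $X$ once the state-marginal flow is frozen. Let $\Theta_i \in \sS(\sZ) \cap \sP_2(\sZ)$, $i=1,2$, with $\Theta_1 \circ \theta^{-1} = \Theta_2 \circ \theta^{-1} \doteq \lambda$, and set $V^i \doteq \nu_{\Theta_i} \in \sV$. Under $\Theta_i$, the canonical pair $(X, \rho)$ then satisfies, a.s.,
\[
    X(t) = X(0) + \int_0^t b(X(s), V^i(s))\,ds + \int_0^t \sigma(V^i(s))\,v(s)\,ds, \qquad v(s) \doteq \int_{\RR^m} y\,\rho_s(dy),
\]
where we used $\sigma(x,\mu) = \sigma(\mu)$. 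For each fixed $V \in \sV$ and each $(\xi_0, r) \in \RR^d \times \sR_1$ with $\int |y|^2\,r(dt,dy) < \infty$, standard Picard iteration (using Lipschitz continuity of $b(\cdot, \mu)$ and boundedness of $\sigma$ from Condition \ref{maincondition}) yields a unique solution $F^V(\xi_0, r) \in \sX$, with $F^V$ jointly Borel measurable in $(\xi_0, r)$. Hence $\Theta_i$ equals the pushforward of $\lambda$ under $(\xi_0, r) \mapsto (F^{V^i}(\xi_0, r), r)$, and the entire problem reduces to showing $V^1 = V^2$.

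To this end, I would work on a single probability space carrying a random element $(\xi_0, \rho)$ of law $\lambda$ and set $X^i \doteq F^{V^i}(\xi_0, \rho)$, so that $V^i(t)$ equals the law of $X^i(t)$ under this measure. Subtracting the two ODEs and using the key fact that $\sigma(V^1(s)) - \sigma(V^2(s))$ is \emph{deterministic}, the Lipschitz estimates in Condition \ref{maincondition} give
\[
    |X^1(t) - X^2(t)| \le L\int_0^t \bigl(|X^1(s) - X^2(s)| + d_1(V^1(s), V^2(s))\bigr)\,ds + L\int_0^t d_1(V^1(s), V^2(s))\,|v(s)|\,ds.
\]
Squaring, applying Cauchy--Schwarz on the time integrals, taking expectation, and using the fact that $(X^1(s), X^2(s))$ is a coupling of $V^1(s)$ and $V^2(s)$ so that $d_1(V^1(s), V^2(s))^2 \le E|X^1(s) - X^2(s)|^2$, one arrives at
\[
    E\sup_{s \le t}|X^1(s) - X^2(s)|^2 \le C\int_0^t E|X^1(s) - X^2(s)|^2\bigl(1 + E|v(s)|^2\bigr)\,ds.
\]
Since $E\int_0^T |v(s)|^2\,ds \le E\int_{[0,T]\times \RR^m} |y|^2\,\rho(dt,dy) < \infty$ by Jensen's inequality applied to the probability measure $\rho_s$ together with $\Theta_i \in \sP_2(\sZ)$, Gronwall's lemma gives $X^1 = X^2$ a.s., hence $V^1 = V^2$ and therefore $\Theta_1 = \Theta_2$.

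The hard part is precisely the reliance on Condition \ref{sigmacondition}(i): without state-independence of $\sigma$, the term $\int_{\RR^m} \sigma(X(s), V(s))\,y\,\rho_s(dy)$ would couple the pathwise difference $|X^1(s) - X^2(s)|$ multiplicatively with the only $L^2$-integrable control magnitude $|v(s)|$, and one could no longer split the expectation into the product of a deterministic $d_1(V^1(s), V^2(s))$ factor and a moment of $|v(s)|$ that the Gronwall closure requires. The remaining technicalities---Borel measurability of the fixed-$V$ solution map $F^V$ and the justification that the coupled $X^i$ on the auxiliary space has the same time marginals as under $\Theta_i$---are routine consequences of Picard iteration together with the identity $\Theta_i = \lambda \circ (F^{V^i}, \mathrm{id})^{-1}$.
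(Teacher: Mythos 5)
Your proposal is correct and follows essentially the same route as the paper: both arguments couple the two solutions through a common $(\xi_0,\rho)$ of law $\Theta_1\circ\theta^{-1}=\Theta_2\circ\theta^{-1}$ and then close a Gronwall estimate that crucially uses the state-independence of $\sigma$ (Condition \ref{sigmacondition}(i)) so that the factor $d_1(V^1(s),V^2(s))$ multiplying $|v(s)|$ is deterministic and the $L^2$ bound on the control suffices. The only immaterial difference is in how the coupling is produced: the paper disintegrates each $\Theta_i$ over $\Lambda$ and takes the product of the two kernels, whereas you first prove pathwise uniqueness of the frozen-$V$ ODE and represent each $\Theta_i$ as a deterministic pushforward of $\lambda$.
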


We now construct the sequence $\{u^N\}$ that satisfies \eqref{eq:specialuN}. 
Because $\Theta_0 \in \sS(\sZ)$, we can disintegrate
\[
	\Theta_0 \circ \theta^{-1}(dx \, dr) = \mu_0(dx)\,\Lambda_0(x, dr), 
\]
for some measurable map $\Lambda_0: \RR^d \to \sP(\sR_1)$. 
Let $\sW \doteq \sC([0,T], \R^m)$, and let $\gamma$ be the standard Wiener measure on $\sW$.
Define a measurable map $\Lambda: \R^d \to \sP(\sR_1\times \sW)$ as
\[
	\Lambda(x, dr, dw) \doteq \Lambda_0(x, dr) \otimes \gamma(dw), \qquad x \in \R^d. 
\]
Define the measurable space $(\tilde \Omega, \tilde \sF)$ by 
\[
	\tilde \Omega = (\sR_1 \times \sW)^{\infty}, \quad \tilde \sF = \sB\big(\tilde \Omega\big),
\]
where an element $(r,w) \in \tilde \Omega$ has the coordinates $r = (r_1, r_2, \ldots)$ and $w = (w_1, w_2, \ldots)$ 
with $r_j \in \sR_1$ and $w_j \in \sW$ for each $j$. 
Consider  the canonical  filtration $\{\tilde \sF(t)\}$ on $(\tilde \Omega, \tilde \sF)$ defined as
$$\tilde \sF(t) \doteq \sigma \left(w_j(s), r_j([0,s]\times A), \, j \in \NN, \,A \in \sB(\R^m), \,s \le t\right), \qquad 0 \le t \le T,$$
and define the sequence $\{P^N, N \in \N\}$ of probability measures on $(\tilde \Omega, \tilde \sF)$ by 
\[
	P^N(dr,dw) = \bigotimes_{j \le N} \Lambda\left(x_j^N, dr_j, dw_j\right) \, \bigotimes_{j > N} \left((\Theta_0)_{(2)}\otimes\gamma\right)(dr_j, dw_j), 
\]
where $\{x_j^N\}$ are as in Condition \ref{initialcondition}. Next define the sequence 
$\{\Lambda^N, N \in \N\}$ of $\sP(\R^d \times \sR_1)$-valued random variables on $(\tilde \Omega, \tilde \sF)$ by 
\[
	\Lambda^N \doteq \frac{1}{N} \sum_{j=1}^N \delta_{\left(x_j^N, \rho_j\right)}, 
\]
where for each $j \in \N$, $\rho_j$ is the $\sR_1$-valued random variable on $(\tilde \Omega, \tilde \sF)$ defined as
 $\rho_j(r,w) = r_j$. Using Condition \ref{sigmacondition}(ii), we see by a standard argument that
\begin{equation}\label{eq:PNLambdaN}
	P^N \circ (\Lambda^N)^{-1} \to \delta_{\Theta_0 \circ \theta^{-1}} \qquad \mbox{as}\; N \to \infty,  
\end{equation}
in $\sP(\sP(\R^d \times \sR_1))$. 

Now, for each $j \in \N$, disintegrating $\rho_j$ as $\rho_j(dt,dy) = (\rho_j)_t(dy)\, dt$, define
\[
	u_j(t) \doteq \int_{\R^m} y\,(\rho_j)_t(dy), \qquad 0 \le t \le T, 
\]
and define $u^N \doteq (u_1, \ldots, u_N)$ for each $N \in \N$. Furthermore, for each $j$ and $(r,w) \in \tilde \Omega$, let
\[
	W_j(t, (r,w)) \doteq w_j(t), \qquad 0 \le t \le T.
\]
Then for each $N$, $W_1, \ldots, W_N$ are mutually independent $\{\tilde \sF(t)\}$-Brownian motions on 
$(\tilde \Omega, \tilde \sF, P^N)$. 
Recall that in this section we are assuming Condition \ref{sigmacondition}, and so $\sigma(x,\nu) = \sigma(\nu)$ for
$(x, \nu) \in \RR^d\times \sP_1(\RR^d)$.
Let $(\bar{X}_1^N, \ldots, \bar{X}_N^N)$ be the unique pathwise solution (which is guaranteed due to Conditions  \ref{maincondition} and \ref{initialcondition})
on $(\tilde \Omega, \tilde \sF, P^N)$ of the system
\begin{align*}
	\bar{X}_j^N(t) &= x_j^N + \int_0^t b\left(\bar{X}_j^N(s), \bar{V}^N(s) \right)\,ds + \eps_N \int_0^t \sigma\left(\bar{V}^N(s) \right)\,dW_j(s) + \int_0^t\sigma\left(\bar{V}^N(s) \right)u_j(s)\,ds, \\
	\bar{V}^N(t) &= \frac{1}{N} \sum_{j=1}^N \delta_{\bar{X}_j^N(t)}, \qquad 0\le t \le T, \qquad 1 \le j \le N.
\end{align*}
Also let $\bar \mu^N = \frac{1}{N}\sum_{j=1}^N \delta_{\bar X_j^N}$. Now define the sequence $\{Q^N\}$
of $\sP(\sZ)$-valued random variables as
\[
	Q^N \doteq \frac{1}{N} \sum_{j=1}^N \delta_{\left(\bar{X}_j^N, \rho_j \right)}, \qquad N \in \N.
\]
 Letting $E^N$ denote expectation on $(\tilde \Omega, \tilde \sF, P^N)$, 
 we note that for a measurable $f : \sR_1 \to \R_+$, 
 \begin{equation}\label{eq:eq630}
 \int_{\sR_1} f(r) \,(\Theta_0)_{(2)}(dr) < \infty\qquad  \mbox{implies}\qquad  E^N\left[ \frac{1}{N} \sum_{j=1}^N f\left(\rho_j\right)\right] \to \int_{\sR_1} f(r) \, (\Theta_0)_{(2)}(dr). 
 \end{equation}
Indeed, if $g(x) = \int_{\sR_1} f(r)\,\Lambda_0(x,dr)$ for $x \in \R^d$, then 
 $$E^N\left[ \frac{1}{N} \sum_{j=1}^N f\left(\rho_j\right)\right] = \frac{1}{N} \sum_{j=1}^N  \int_{\sR_1} f(r)\,\Lambda_0\left(x_j^N, dr\right) = \frac{1}{N} \sum_{j=1}^N g\left(x_j^N\right),$$
and 
 \begin{align*}
 	\int_{\R^d} g(x)\, \mu_0(dx) &= \int_{\R^d} \int_{\sR_1} f(r)\, \Lambda_0(x, dr)\,\mu_0(dx)\\
	&= \int_{\R^d \times \sR_1} f(r) \,\Theta_0 \circ \theta^{-1}(dx, dr) = \int_{\sR_1} f(r)\, (\Theta_0)_{(2)}(dr) <\infty.
 \end{align*}
 Thus, from Condition \ref{sigmacondition}(ii),
 \begin{equation}\label{eq:condneeded}\lim_{N \to \infty}E^N\left[ \frac{1}{N} \sum_{j=1}^N f\left(\rho_j\right) \right] = \lim_{N \to \infty} \frac{1}{N} \sum_{j=1}^N g\left(x_j^N\right) = \int_{\R^d} g(x)\, \mu_0(dx) = \int_{\sR_1} f(r)\, (\Theta_0)_{(2)}(dr),
 \end{equation}
 which proves \eqref{eq:eq630}.
Now, we have
\begin{equation}\label{eq:eq424}
\begin{aligned}
	\limsup_{N \to \infty} E^N \left[ \frac{1}{N}\sum_{j=1}^N \int_0^T \left| u_j(t) \right|^2\,dt \right] 
	&\le \limsup_{N \to \infty} E^N \left[ \frac{1}{N}\sum_{j=1}^N \int_{[0,T]\times \R^m}  \left| y \right|^2 \rho_j(dt, dy) \right] \\
	&= E_{\Theta_0} \left[ \int_{[0,T]\times \R^m} |y|^2\,\rho(dt,dy) \right] < \infty, 
\end{aligned}
\end{equation}
where the convergence on the second line follows from \eqref{eq:eq630} on observing that, since $\Theta_0 \in \sP_2(\sZ)$,
$$f(r) = \int_{[0,T]\times\R^m}  \left| y \right|^2 r(dt,dy), \qquad r \in \sR_1,$$ satisfies
$$\int_{\sR_1} f(r)\, (\Theta_0)_{(2)}(dr) = E_{\Theta_0} \left[ \int_{[0,T]\times\R^m} |y|^2\,\rho(dt,dy) \right] < \infty.$$
Next, for each $\varphi \in \sC_c^\infty(U\times \R^d, \R^d)$ define
\[
	\bar J^N(\varphi) \doteq \frac{1}{N} \sum_{j=1}^N \int_0^T \varphi\left(t, \bar X_j^N(t)\right) \circ d\bar X_j^N(t), \qquad N \in \N.
\]
From Lemma \ref{JbarReal}, the collection $\{\varphi \mapsto \bar J^N(\varphi)\}$ has a pathwise realization $\bar \sJ^N$
 in $\mathbf{H}^{-\mathbf{s}}$.
Using Lemma \ref{keytightness} and the moment bound in \eqref{eq:eq424}, we now see that
 $\{(\bar \mu^N, Q^N, \bar \sJ^N), N \in \N\}$ is tight in $\sP_1(\sX) \times \sP(\sZ) \times \mathbf{H}^{-\mathbf{s}}$. 
Suppose, without loss of generality, that $(\bar \mu^N, Q^N,\bar\sJ^N) \Rightarrow (\bar \mu, Q, \bar\sJ)$ 
in $\sP_1(\sX)\times \sP(\sZ) \times \mathbf{H}^{-\mathbf{s}}$.
By Lemma \ref{keytightness} again,  $Q \in \sP^*(\bar\sJ)$ and $Q_{(1)} = \bar \mu$ a.s. 
Since $Q^N\circ \theta^{-1} = \Lambda^N$, \eqref{eq:PNLambdaN} implies that $Q\circ \theta^{-1} = \Theta_0 \circ \theta^{-1}$ a.s., 
and hence by the weak uniqueness established in Lemma \ref{lem:weakuniq}, $Q = \Theta_0$ a.s. 
Furthermore, from the definition of $\sP^*(\bar \sJ)$,   
\[
	\left\il\bar\sJ, \varphi \right\ir = G_\varphi(Q) = G_\varphi(\Theta_0) = \left \il\sJ_0, \varphi \right\ir
\]
for every $\varphi$, a.s., and hence $\bar\sJ = \sJ_0$ a.s. by separability of $\sC_c^\infty(U\times \R^d, \R^d)$  and its denseness in $\mathbf{H}^{-\mathbf{s}}$.

It follows that $(Q^N, \bar\sJ^N) \Rightarrow (\Theta_0, \sJ_0)$. Finally, 
\begin{align*}
	&\limsup_{N\to\infty} E^N \left[ \frac{1}{2N} \sum_{j=1}^N \int_0^T \left|u_j(t)\right|^2\,dt + F\left(\bar \mu^N, \bar \sJ^N\right) \right] \\
	&= \limsup_{N \to \infty} E^N \left[ \frac{1}{2N} \sum_{j=1}^N \int_0^T \left|u_j(t)\right|^2\,dt 
	+ F\left( Q^N_{(1)}, \bar\sJ^N \right) \right] \\
	&\le E_{\Theta_0}\left[ \frac{1}{2} \int_{[0,T]\times\R^m} |y|^2\,\rho(dt,dy) \right] + F\left((\Theta_0)_{(1)}, \sJ_0 \right), 
\end{align*}
where the last inequality is from \eqref{eq:eq424} and since $F$ is a bounded continuous function. This shows \eqref{eq:specialuN} and completes the proof of the 
lower bound in \eqref{eq:Lower},  and part (ii) of Theorem \ref{main} follows. \hfill\qed

\subsection{Rate Function Property}\label{section:ratefunction}
In this section we show that the function $I: \sP_1(\sX) \times \mathbf{H}^{-\mathbf{s}} \to [0,\infty]$ defined in \eqref{eq:RateFunction} has compact sublevel sets for every 
$\mathbf{s} \in \sO_d$.
Fix $\mathbf{s}$, and for each $l < \infty$ consider the level set 
$\Gamma_l \doteq \{(\mu, \sJ) \in \sP_1(\sX) \times \mathbf{H}^{-\mathbf{s}} : I(\mu, \sJ) \le l \}$. 
The proof of the following lemma is given in Section \ref{sec:pflem908}. 
\begin{lemma}
	\label{lem:lem908}
	Suppose  Conditions \ref{maincondition} and \ref{initialcondition} hold.
Let $\mathbf{s} \in \sO_d$ and let
$\{(\mu_k,  \Theta_k, \sJ_k), k \in \N\}$ be a sequence in 	$\sP_1(\sX)\times \sP(\sZ) \times \mathbf{H}^{-\mathbf{s}}$ such that for each $k$, $\Theta_k \in \sP^*(\sJ_k)$,  $(\Theta_k)_{(1)} = \mu_k$, and
\begin{equation}\label{eq:eq100}
	\sup_{k\ge 1} E_{\Theta_k} \left[ \frac{1}{2} \int_{[0,T]\times\R^m} |y|^2\,\rho(dt,dy) \right] <\infty.\end{equation}
Then the sequence $\{(\mu_k,  \Theta_k, \sJ_k), k \in \N\}$ is relatively compact in $\sP_1(\sX)\times \sP(\sZ) \times \mathbf{H}^{-\mathbf{s}}$.
\end{lemma}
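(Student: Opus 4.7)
The plan is to establish relative compactness of each component of the triple $(\mu_k, \Theta_k, \sJ_k)$ separately, mirroring the structure of the proof of Lemma \ref{keytightness} but adapted to the setting where $\Theta_k$ is a measure on $\sZ$ under which the canonical process $(X,\rho)$ satisfies the McKean-Vlasov equation \eqref{eq:LimitContSDE} (rather than being an empirical measure coming from an $N$-particle system).

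First I would derive moment bounds on $X$ under $\Theta_k$. Under $\Theta_k$, $X(0)$ has law $\mu_0$ which has finite second moment by Condition \ref{initialcondition}, and $X$ satisfies \eqref{eq:LimitContSDE}. Using the linear growth of $b$ from \eqref{eq:eq208}, the bound $|\sigma|\le L$, and the Cauchy--Schwarz inequality $\int_0^T |\int y\,\rho_t(dy)|^2\,dt \le \int_{[0,T]\times\R^m} |y|^2\,\rho(dt,dy)$, a Gronwall argument yields $\sup_k E_{\Theta_k}[\|X\|_\infty^2] < \infty$. Combining this with the standard modulus-of-continuity estimate $|X(t)-X(s)| \le C(|t-s| + |t-s|^{1/2}(\int |y|^2\rho)^{1/2})$ gives tightness of $\mu_k = (\Theta_k)_{(1)}$ in $\sP(\sX)$, and the uniform second-moment estimate supplies uniform integrability of $\|x\|_\infty$, upgrading to tightness in $\sP_1(\sX)$. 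For the second marginal, the set $K_M \doteq \{r \in \sR_1 : \int |y|^2 r(dt,dy) \le M\}$ is precompact in $\sR_1$ (weak compactness plus uniform $L^2$-integrability of $|y|$ yields Wasserstein-1 convergence), and Markov's inequality applied to the assumption \eqref{eq:eq100} gives $\sup_k \Theta_k(\rho \notin K_M) \le C/M$. Tightness of $\Theta_k$ in $\sP(\sZ)$ follows by combining the two marginals.

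The core step is relative compactness of $\sJ_k$ in the norm topology of $\mathbf{H}^{-\mathbf{s}}$. From $\Theta_k \in \sP^*(\sJ_k)$ one has $\langle \sJ_k,\varphi\rangle = G_\varphi(\Theta_k)$; together with the Sobolev embedding $\mathbf{H}^{\mathbf{s}} \hookrightarrow C_b(U\times\R^d, \R^d)$ (valid since $s_1 > 1/2$ and $s_2 > d/2 + 1$) and the moment bounds above, this immediately gives $\sup_k \|\sJ_k\|_{-\mathbf{s}} < \infty$, which by itself delivers only weak sequential compactness. To upgrade to norm precompactness I would follow a truncation-plus-compact-embedding strategy. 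Fix a smooth spatial cutoff $\chi_R$ equal to $1$ on the ball $B_R \subset \R^d$ and supported in $B_{2R}$, set $\langle \sJ_k^R, \varphi\rangle \doteq \langle \sJ_k, \chi_R \varphi\rangle$, and verify: (i) the family $\{\sJ_k^R\}_k$ is supported in the bounded cylinder $U \times B_{2R}$ and uniformly bounded in a space that embeds compactly into $\mathbf{H}^{-\mathbf{s}}$, so that Lemma \ref{compactembedding} yields precompactness of $\{\sJ_k^R\}_k$ in $\mathbf{H}^{-\mathbf{s}}$; and (ii) $\sup_k \|\sJ_k - \sJ_k^R\|_{-\mathbf{s}} \to 0$ as $R \to \infty$, which follows by bounding the contribution from $\{\|X\|_\infty > R\}$ via Markov's inequality $\Theta_k(\|X\|_\infty > R) \le C/R^2$, Cauchy--Schwarz on the integrand $|b(X,\nu_{\Theta_k})| + |\sigma(X,\nu_{\Theta_k})||u|$, and the Sobolev embedding to dominate $\|(1-\chi_R)\varphi\|_\infty$ by $\|\varphi\|_{\mathbf{s}}$. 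A diagonal extraction in $R$ then produces a subsequence converging in $\mathbf{H}^{-\mathbf{s}}$.

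The principal obstacle is step (i): making precise the function-space framework in which the truncated currents $\sJ_k^R$ sit and from which one has a compact embedding into $\mathbf{H}^{-\mathbf{s}}$. This is the analytic heart of the argument, essentially driven by the compactness of Gagliardo-type Sobolev embeddings on bounded cylinders, and is the role played by Lemma \ref{compactembedding}. Once the tightness of $\{\Theta_k\}$ in $\sP(\sZ)$, the tightness of $\{\mu_k\}$ in $\sP_1(\sX)$, and the norm precompactness of $\{\sJ_k\}$ in $\mathbf{H}^{-\mathbf{s}}$ are assembled, relative compactness of $\{(\mu_k,\Theta_k,\sJ_k)\}$ in the product space is immediate.
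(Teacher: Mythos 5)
Your proposal is correct and follows essentially the same route as the paper: Gronwall moment bounds for $X$ under $\Theta_k$, tightness of the path marginals upgraded to $\sP_1(\sX)$ via the uniform second moments, compact level sets of $\theta\mapsto\int\int|y|^2\,r(dt,dy)\,\theta(dr)$ for the control marginals, and a spatial cutoff plus Lemma \ref{compactembedding} with a uniformly vanishing remainder for the currents. The one point you flag as the "principal obstacle" is resolved in the paper by Lemma \ref{Gpathwisereal}, which gives the uniform bound $\|\sJ_k^M\|_{-\mathbf{s}'}\le C$ for \emph{strictly smaller} indices $\mathbf{s}'\in\sO_d$ (not merely in $\mathbf{H}^{-\mathbf{s}}$), which is exactly what makes the compact embedding $\mathbf{H}^{-\mathbf{s}'}\hookrightarrow\mathbf{H}^{-\mathbf{s}}$ applicable to the compactly supported truncations.
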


Now we prove the compactness of $\Gamma_l$. Let $\{(\mu_k, \sJ_k), k \in \N\}$ be a sequence in $\Gamma_l$. From the definition of $I$, for each $k \in \N$ there is a 
$\Theta_k \in \sP^*(\sJ_k)$ with $(\Theta_k)_{(1)} = \mu_k$ such that 
\begin{equation}\label{eq:Thetak}
	E_{\Theta_k} \left[ \frac{1}{2} \int_{[0,T]\times\R^m} |y|^2\,\rho(dt,dy) \right] \le l + \frac{1}{k}. 
\end{equation}
From Lemma \ref{lem:lem908},
$\{(\mu_k, \Theta_k, \sJ_k)\}$ is relatively compact in $\sP_1(\sX)\times \sP(\sZ) \times \mathbf{H}^{-\mathbf{s}}$.
It is easily checked that if $(\mu, \Theta, \sJ)$ is a limit point along some subsequence, then $\Theta_{(1)} = \mu$ and along the same subsequence
$G_{\varphi}(\Theta_k) \to G_{\varphi}(\Theta)$  and $\il\sJ_k, \varphi\ir \to \il\sJ, \varphi\ir$
for every $\varphi \in \sC_c^\infty(U \times \R^d, \R^d)$.
This shows that
$\Theta \in \sP^*(\sJ)$.  
Sending $k \to \infty$ in \eqref{eq:Thetak} and using lower semicontinuity
of the map $r \mapsto \int_{[0,T]\times\R^m}  |y|^2\,r(dt,dy)$ on $\sR_1$, we obtain 
\[
	E_{\Theta} \left[ \frac{1}{2} \int_{[0,T]\times\R^m} |y|^2\,\rho(dt,dy) \right] \le l, 
\]
and hence $(\mu, \sJ)$  lies in $\Gamma_l$.  Compactness of $\Gamma_l$ follows. \hfill \qed

\subsection{Law of Large Numbers}\label{section:LLN}
Here we prove Theorem \ref{LLN}. 
The model \eqref{eq:model} can be viewed as the controlled equation \eqref{eq:controlledmodel} with the controls 
taken to be $u_j^N \equiv 0$ for all $1\le j \le N$ and $N \in \N$. 
From Lemma \ref{keytightness} it then follows that $(\mu^N, Q^N, \sJ^N)$ is tight in $\sP_1(\sX)\times\sP(\sZ) \times \mathbf{H}^{-\mathbf{s}}$. 
Suppose that along some subsequence $(\mu^N, Q^N, \sJ^N) \Rightarrow (\mu, Q, \sJ)$. Then, once again from Lemma 
\ref{keytightness}, $Q_{(1)} =  \mu$ and 
$Q \in \sP^*(\sJ)$ a.s.
Furthermore, since $u_j^N \equiv 0$ for all $1\le j \le N$ and $N \in \N$ we see that the second coordinate variable on $\sZ$ satisfies $Q(\rho=0)=1$ a.s., and thus, under $Q$, the first coordinate variable on $\sZ$ satisfies
\begin{equation*}
	X(t) =  X(0) + \int_0^t b(X(s),  V(s))\, ds, \qquad  V(t) = Q\circ X(t)^{-1}, \qquad V(0) = \mu_0, 
\end{equation*}
for all $0\le t \le T$. Then, from the unique solvability of \eqref{eq:deteq}, it follows that  $\mu = \mu^*$ a.s., and hence we have that $\mu^N$ converges in probability in $\sP_1(\sX)$
(along the full sequence) to $\mu^*$. Since $V^N(t) = \mu^N \circ \pi_t^{-1}$ and $V^*(t) = \mu^*\circ \pi_t^{-1}$ for each $0\le t\le T$, we also have that
$V^N \to V^*$ in probability in $\sV$. 
Finally, since $Q \in \sP^*(\sJ)$ a.s., 
$$G_\varphi(Q) = \il\sJ, \varphi \ir$$
for all $\varphi \in \sC_c^\infty(U \times \R^d, \R^d)$, a.s., and
note that
\begin{align*}
	G_\varphi(Q) &= E_{Q}\left[ \int_0^T \varphi\left(t, X(t) \right)\cdot dX(t) \right] \\
	&= E_{Q}\left[ \int_0^T \varphi\left(t, X(t) \right)\cdot b(X(t), V^*(t))\,dt \right] \\
	&= \int_0^T \left\il V^*(t), \varphi(t, \cdot ) \cdot b\left(\cdot, V^*(t)\right) \right\ir \, dt.
\end{align*}
Thus $\il \sJ, \varphi \ir$ is (a.s.) uniquely characterized for all $\varphi \in \sC_c^\infty(U \times \R^d, \R^d)$. From the separability
of $\sC_c^\infty(U\times \R^d, \R^d)$  and its denseness in $\mathbf{H}^{-\mathbf{s}}$ we now see that
$\sJ^N$ converges (along the full sequence) in probability, in $\mathbf{H}^{-\mathbf{s}}$, to the nonrandom limit
$\sJ^*$ characterized as
$$\left\il \sJ^*, \varphi \right\ir = \int_0^T \left\il V^*(t), \varphi(t, \cdot )\cdot b\left(\cdot, V^*(t)\right) \right\ir\, dt . $$
The result follows.
\hfill\qed

\subsection{Equivalent Formulation of the Rate Function}
\label{sec:ieqtili}
In this section we give the proof of Proposition \ref{lem:sigeqi}. Let $m = d$, and suppose that for every $\mu \in \clp_1(\RR^d)$, $\sigma(\mu)$ is invertible. 
We first argue that $\tilde I_0  \le \tilde I$.  Fix $(V, \sJ) \in \sV  \times \mathbf{H}^{-\mathbf{s}}$ such that $\tilde I(V, \sJ) < \infty$.
Fix $\delta > 0$ and 
let $\Theta \in \sP^*(\sJ)$ with $\nu_{\Theta} =  V$ be $\delta$-optimal for $ \tilde I(V, \sJ)$, namely
\begin{equation}
	\label{eq:eq210r}
	E_\Theta \left[ \frac{1}{2} \int_{[0,T]\times\R^d} |y|^2\,\rho(dt,dy) \right] \le  \tilde I(V, \sJ) + \delta.\end{equation}
Disintegrate $\rho(dt,dy) = \rho_t(dy)\, dt$ and define
\begin{equation}
\label{eq:urho}
	v(t) \doteq \int_{\RR^d} y \, \rho_t(dy), \qquad \mbox{ a.e. }  t \in [0, T]. 
\end{equation}
Also let 
$\eta^{\Theta}_t \doteq \Theta \circ (X(t), \sigma(V(t))v(t) + b(X(t), V(t)))^{-1} \in \sP(\R^{2d})$. Then, since $\nu_{\Theta} =  V$,  $\eta^{\Theta}_t$ can be disintegrated as
$\eta^{\Theta}_t(dx, dy) = \hat \eta^{\Theta}_t(x,dy)\, V(t,dx)$ for some $\hat \eta^{\Theta}_t: \R^d \to \sP(\R^d)$. 
Define the function  $h$ on $[0,T]\times \R^d$ by 
\begin{align}\label{eq:eq1259}
	h(t,x) &\doteq \int_{\RR^d} y \,\hat \eta^{\Theta}_t(x, dy), 
\end{align}
and note that Condition \ref{maincondition} ensures that this is well-defined. 
Under $\Theta$, $V(0) = \mu_0$ and 
 \begin{equation}\label{eq:eq1256}
X(t) = X(0) + \int_0^t b(X(s), V(s))\,ds + \int_{[0,t]\times \R^d} \sigma(V(s)) y \,\rho_s(dy)\,ds, \qquad \mbox{a.s.}, 
\end{equation}
for each $t$, and so for $\varphi \in \sC_c^\infty((0,T)\times \R^d, \R)$, 
\begin{align*}
	0 &= \varphi(T, X(T)) - \varphi(0, X(0)) \\
	&= \int_0^T  \left( \frac{\partial}{\partial t}\varphi(t, X(t)) + \nabla \varphi(t, X(t)) \cdot  b(X(t), V(t)) + 
	\nabla \varphi(t, X(t)) \cdot \sigma(V(t)) v(t)\right)\,dt,
\end{align*}
where $v$ is as in \eqref{eq:urho}.
Taking expectations with respect to $\Theta$,  
\begin{align}
	0&=
	\int_0^T  \int_{\R^{2d}} \left[\frac{\partial}{\partial t}\varphi(t, x) +  \nabla \varphi(t, x) \cdot  y\right] \, \eta^{\Theta}_t(dx,dy)\,dt\nonumber \\
	%
	&= \int_0^T  \left\il  V(t), \frac{\partial}{\partial t}\varphi(t, \cdot) +   \nabla \varphi(t, \cdot) \int_{\R^{d}} y\, \hat \eta^{\Theta}_t(\cdot, dy)\right\ir  \,dt\label{eq:eq937}\\
	&= \int_0^T  \left\il  V(t), \frac{\partial}{\partial t}\varphi(t, \cdot) + \nabla \varphi(t, \cdot) \cdot  h(t, \cdot)  \right\ir  \,dt.\nonumber
\end{align}
Similarly, since $\il \sJ, \varphi \ir = G_\varphi(\Theta)$, it is seen that 
for $\varphi \in \sC_c^\infty((0,T)\times \R^d, \R^d)$, 
$$\il\sJ, \varphi\ir = \int_0^T \left\il  V(t), \varphi(t, \cdot)\cdot h(t,\cdot) \right\ir \,dt.$$
Since $V = \nu_{\Theta}$, we now see from the above two identities that
\begin{align*}	
\tilde I_0(V, \sJ) 
 &\le \frac{1}{2} \int_0^T  \left\il V(t), 
	\int_{\RR^d} \left| \sigma^{-1}(V(t))(y- b(\cdot, V(t))\right|^2\, \hat\eta^{\Theta}_t(\cdot, dy)
	 \right \ir   \,dt\\
&=\frac{1}{2} \int_0^T 
	\int_{\RR^{2d}} \left| \sigma^{-1}(V(t))(y- b(x, V(t)))\right|^2\, \eta^{\Theta}_t(dx, dy)  \, dt\\
&= \frac{1}{2} \int_0^T E_{\Theta}\left[ \left|v(t)\right|^2 \right]\, dt 
\le \frac{1}{2} E_{\Theta}\left[\int_{[0,T]\times \RR^d} |y|^2\, \rho(dt,dy)\right] \le \tilde I(V, \sJ)+ \delta,
\end{align*}
where the last inequality is from \eqref{eq:eq210r}.
 Since $\delta>0$ is arbitrary, the inequality
$\tilde I_0(V, \sJ) \le \tilde I(V, \sJ)$ follows.

We now prove the reverse inequality, namely $\tilde I(V, \sJ) \le \tilde I_0(V, \sJ)$. Once more fix $\delta > 0$ and $(V, \sJ) \in \sV \times \mathbf{H}^{-\mathbf{s}}$ such that $\tilde I_0(V, \sJ) < \infty$,
and let $\Theta \in \sS(\sZ) \cap \sP_2(\sZ)$ be $\delta$-optimal for $\tilde I_0(V, \sJ)$, namely
\begin{equation}
\frac{1}{2} \int_0^T  \left\il V(t), 
	\int_{\RR^d} \left| \sigma^{-1}(V(t))(y- b(\cdot, V(t))\right|^2\, \hat\eta^{\Theta}_t(\cdot, dy)
	 \right \ir   dt \le \tilde I_0(V, \sJ) + \delta, \label{eq:eq218r}
\end{equation}
$V= \nu_{\Theta}$, and  $(V, \sJ)$ solves
\eqref{eq:distlimit} with $h(t,x) = \int_{\RR^d} y\, \hat \eta^{\Theta}_t(x, dy)$.
In particular, for all $\varphi \in \sC_c^\infty((0,T)\times \R^d, \R^d)$, \eqref{eq:eq602} holds.
Now define an $\sR_1$-valued random variable $\tilde \rho$ on $(\sZ, \sB(\sZ))$ as
$$\tilde \rho(dt, dy) = \delta_{v(t)}(dy)\, dt,$$
where $v$ is defined in terms of the coordinate variable $\rho$ as in \eqref{eq:urho}.
Defining $\tilde \Theta \in \sP(\sZ)$ as
$\tilde \Theta \doteq \Theta \circ (X, \tilde \rho)^{-1}$, we have that $\nu_{\tilde \Theta} = \nu_\Theta = V$, and it can be seen from \eqref{eq:eq1256} that
$\tilde \Theta \in \sS(\sZ)$.
Also, since  \eqref{eq:eq602} holds for any $\varphi \in \sC_c^\infty((0,T)\times \R^d, \R^d)$, 
\begin{align*}
\il\sJ, \varphi\ir &= \int_0^T \left\il  V(t), \varphi(t, \cdot)\cdot h(\cdot, t) \right\ir \,dt\\
&= \int_0^T \left\il  V(t), \varphi(t, \cdot)\cdot \int_{\RR^d} y\, \hat \eta^{\Theta}_t(\cdot, dy)\right\ir \,dt
= \int_0^T\int_{\RR^{2d}} \varphi(t, x)y\, \eta^{\Theta}_t(dx, dy) \,dt\\
&= E_{\Theta}\left[ \int_{[0,T]\times \RR^d} \varphi(t, X(t)) [\sigma(V(t))y + b(X(t), V(t))]\,\rho(dt,dy)\right]\\
&= E_{\tilde\Theta} \left[\int_{[0,T]\times \RR^d} \varphi(t, X(t)) [\sigma(V(t))y + b(X(t), V(t))]\,\rho(dt,dy)\right] = G_{\varphi}(\tilde \Theta),
\end{align*}
where the last line uses the fact that $\int y\, \rho_t(dy) = v(t) =  \int y\, \delta_{v(t)}(dy) = \int y \,\tilde \rho_t(dy)$.
Thus, $\tilde \Theta \in \sP^*(\sJ)$.
Finally,
\begin{align*}
	\tilde I(V, \sJ) &\le E_{\tilde \Theta} \left[ \frac{1}{2} \int_{[0,T]\times \R^d} |y|^2\,\rho(dt,dy) \right] = E_{\Theta} \left[ \frac{1}{2} \int_0^T |v(t)|^2\, dt \right] \\
	&=  E_{\Theta} \left[ \frac{1}{2} \int_0^T \left|\sigma^{-1}(V(t)) [ \sigma(V(t)) v(t) + b(X(t), V(t)) - b(X(t), V(t))]\right|^2\, dt\right]\\
	&= \frac{1}{2} \int_0^T \int_{\RR^{2d}} \left|\sigma^{-1}(V(t)) [ y - b(x, V(t))]\right|^2\, \eta_t(dx,dy)\,dt\\
	&=  \frac{1}{2} \int_0^T  \left\il V(t),
	\int_{\RR^d} \left| \sigma^{-1}(V(t))(y- b(\cdot, V(t))\right|^2\, \hat\eta^{\Theta}_t(\cdot, dy)
	 \right \ir   dt
	  \le \tilde I_0(V, \sJ) + \delta,
\end{align*}
where we used \eqref{eq:eq218r}.
Since $\delta>0$ is arbitrary, the inequality $\tilde I(V, \sJ) \le \tilde I_0(V, \sJ)$ follows and completes the proof of the lemma. \hfill \qed

\section{Proofs of Key Lemmas}\label{sec:keylemmas}
In this section we provide proofs of the results used in showing the Laplace upper and lower bounds. First we establish two estimates that will be used in subsequent sections. 

\begin{lemma}\label{L2bound} 
Suppose Conditions \ref{maincondition} and \ref{initialcondition} are satisfied. Let $u^N = (u_1^N, \ldots, u_N^N) \in \sA_N$ and let $\bar X^N$ be as defined in  \eqref{eq:controlledmodel}.
	Then, for each $N \in \N$,
\begin{equation}\label{eq:firstestimate}
	\frac{1}{N}\sum_{j=1}^NE\left[\left\|\bar X_j^N\right\|_\infty^2 \right] \le c\left( 1 + \frac{1}{N}\sum_{j=1}^N\left|x_j^N\right|^2 + E\left[ \frac{1}{N}\sum_{j=1}^N\int_0^T \left|u_j^N(t)\right|^2\,dt \right] \right), 
\end{equation}
and for any $\eps > 0$ and any $\{\sF(t)\}$-stopping time $\tau$ taking values in $[0, T-\eps]$, 
\[
	\frac{1}{N}\sum_{j=1}^N E\left[ \left| \bar X_j^N(\tau + \eps) - \bar X_j^N(\tau) \right|^2\right] \le c\eps \left( 1 + \frac{1}{N}\sum_{j=1}^N \left|x_j^N\right|^2 + E\left[ \frac{1}{N}\sum_{j=1}^N \int_0^T \left|u_j^N(t)\right|^2\,dt \right] \right). 
\]
where $c<\infty$ does not depend on $N$, $u^N$, or $\eps$. 
\end{lemma}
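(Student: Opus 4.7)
My plan is to prove both estimates by standard $L^2$ a priori bounds for the controlled SDE \eqref{eq:controlledmodel}, exploiting the Lipschitz/linear-growth hypotheses on $b$, the uniform bound $|\sigma|\le L$, the assumption $\eps_N\le 1$, and (crucially) the fact that averaging over $j$ closes up the mean-field dependence.

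For the supremum bound \eqref{eq:firstestimate}, I write \eqref{eq:controlledmodel} in integral form, split $\bar X_j^N(t)-x_j^N$ into drift, diffusion, and control pieces, and bound $\sup_{s\le t}|\bar X_j^N(s)|^2$ by four times the sum of the squared norms. For the drift piece I use Cauchy--Schwarz in time together with \eqref{eq:eq208} to get
\[
 \sup_{s\le t}\Bigl|\int_0^s b(\bar X_j^N(r),\bar V^N(r))\,dr\Bigr|^2
 \le 3TL^2 \int_0^t\Bigl(1+|\bar X_j^N(r)|^2+\tfrac{1}{N}\sum_k|\bar X_k^N(r)|^2\Bigr)dr,
\]
where $(\int|y|\,\bar V^N(r,dy))^2\le \tfrac1N\sum_k|\bar X_k^N(r)|^2$ by Jensen. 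For the diffusion piece I apply Doob's $L^2$ inequality together with $|\sigma|\le L$ and $\eps_N\le 1$, giving a contribution bounded by $4L^2 T$. For the control piece I again use Cauchy--Schwarz and $|\sigma|\le L$ to get $L^2 T\int_0^T |u_j^N(r)|^2dr$. Taking expectations, averaging over $j$, and letting $\bar f^N(t)=\tfrac1N\sum_j E[\sup_{s\le t}|\bar X_j^N(s)|^2]$, the two $\bar X$-dependent terms from the drift both become $\bar f^N(r)$ after averaging, yielding
\[
 \bar f^N(t)\le c\Bigl(1+\tfrac1N\sum_j|x_j^N|^2+E\bigl[\tfrac1N\sum_j\int_0^T|u_j^N|^2dr\bigr]\Bigr)+c\int_0^t\bar f^N(r)\,dr.
\]
Gronwall's lemma then gives \eqref{eq:firstestimate}.

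For the increment bound, I argue similarly on the interval $[\tau,\tau+\eps]$. The drift and control terms are again handled by Cauchy--Schwarz in time, producing a factor of $\eps$ in front of $\int_\tau^{\tau+\eps}|b|^2 dr$ and $\int_\tau^{\tau+\eps}|u_j^N|^2 dr$ respectively; the former is controlled by \eqref{eq:firstestimate} via the linear-growth bound \eqref{eq:eq208}, while the latter, after extending the integral to $[0,T]$, contributes precisely the $\eps\cdot E[\tfrac1N\sum_j\int_0^T|u_j^N|^2]$ term. For the diffusion piece, the optional stopping theorem for the square-integrable martingale $\eps_N\int_0^{\cdot}\sigma\,dW_j$ gives $E\bigl[|\eps_N\int_\tau^{\tau+\eps}\sigma\,dW_j|^2\bigr]=\eps_N^2 E\bigl[\int_\tau^{\tau+\eps}|\sigma|^2 dr\bigr]\le L^2\eps$, using $\eps_N\le 1$ and $|\sigma|\le L$. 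Summing, dividing by $N$, and inserting the bound from the first estimate yields the second inequality.

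The only subtlety is the coupling through the empirical measure $\bar V^N$ inside the drift. This dissolves because \eqref{eq:eq208} bounds $|b(x,\mu)|$ by $1+|x|+\int|y|\mu(dy)$, and for $\mu=\bar V^N(s)$ this equals $\tfrac1N\sum_k|\bar X_k^N(s)|$; squaring and averaging over $j$ lands us in the same averaged quantity $\bar f^N$, making Gronwall applicable uniformly in $N$. Since every other term is either deterministic (initial data), a BDG/Doob-controlled martingale, or a straightforward $L^2$ bound against $u_j^N$, there is no substantive technical obstacle beyond keeping track of constants.
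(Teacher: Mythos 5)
Your proposal is correct and follows essentially the same route as the paper's proof: the same four-way decomposition of the integral equation, Cauchy--Schwarz in time for the drift and control terms, a Doob/BDG bound for the stochastic integral, averaging over $j$ to close the mean-field coupling, Gronwall for the supremum bound, and the optional sampling theorem for the increment bound at the stopping time. No gaps.
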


\begin{proof}
Condition \ref{maincondition} (see \eqref{eq:eq208}) implies 
\[
	\left| b\left(\bar X_j^N(t), \bar V^N(t) \right) \right|^2 \le 3L^2\left(1 + \left|\bar X_j^N(t) \right|^2 + \frac{1}{N}\sum_{j=1}^N \left|\bar X_j^N(t) \right|^2 \right), 
\]
and so from \eqref{eq:controlledmodel} and since $|\sigma| \le L$ and $\eps_N \le 1$, we have 
\begin{align*}
	\left| \bar X_j^N(t) \right|^2 
	&\le 4 \left| x_j^N \right|^2 +4 \left| \int_0^t b\left(\bar X_j^N (s), \bar V^N(s)\right)\,ds \right|^2 + 4\left| \eps_N \int_0^t \sigma\left(\bar X_j^N(s), \bar V^N(s) \right)\,dW_j(s) \right|^2  \\
	&\qquad  + 4\left| \int_0^t \sigma \left( \bar X_j^N(s), \bar V^N(s) \right) u_j^N(s)\,ds \right|^2 \\
	&\le 4 \left| x_j^N \right|^2 + 12 L^2T \left( 1 + \int_0^t \sup_{0\le r \le s}\left| \bar X_j^N(r) \right|^2\,ds + \frac{1}{N}\sum_{j=1}^N \int_0^t \sup_{0\le r \le s}\left| \bar X_j^N(r) \right|^2\,ds \right) \\
	&\qquad + 4\sup_{0\le r \le t} \left| \int_0^r \sigma\left(\bar X_j^N(s), \bar V^N(s) \right)\,dW_j(s) \right|^2 + 4L^2T \int_0^T \left| u_j^N(s) \right|^2\,ds. 
\end{align*}
Hence by The Burkholder-Davis-Gundy inequality, and using boundedness of $\sigma$ once more, 
\begin{align*}
	\frac{1}{N}\sum_{j=1}^N E\left[ \sup_{0\le s \le t} \left| \bar X_j^N(s) \right|^2 \right] &\le \frac{4}{N}\sum_{j=1}^N \left| x_j^N \right|^2 + 24 L^2T \left( 1 + \int_0^t\frac{1}{N}\sum_{j=1}^N E\left[\sup_{0\le r \le s}\left| \bar X_j^N(r) \right|^2\right]\,ds \right)  \\
	&\qquad + 16L^2T + 4L^2T E\left[\frac{1}{N}\sum_{j=1}^N\int_0^T \left|u_j^N(s) \right|^2\,ds \right]. 
\end{align*}
The first statement in the lemma then follows by Gronwall's inequality (see \cite[Theorem A.5.1]{EthKur} ) with $c = 24(L^2T+1)e^{24 L^2T^2}$.

Next, for any $t \in [0, T-\eps]$, the linear growth of $b$, boundedness of $\sigma$, and the Cauchy-Schwarz inequality give 
\begin{align*}
	\left| \bar X_j^N(t + \eps) - \bar X_j^N(t) \right|^2 &\le 4\left| \int_t^{t+\eps} b\left(\bar X_j^N(s), \bar V^N(s) \right)\,ds \right|^2 + 4\left| \eps_N \int_t^{t+\eps} \sigma \left( \bar X_j^N(s), \bar V^N(s)\right)\,dW_j(s) \right|^2 \\
	&\qquad + 4\left| \int_t^{t+\eps} \sigma \left( \bar X_j^N(s), \bar V^N(s) \right) u_j^N(s)\,ds \right|^2 \\
	&\le 12 TL^2 \eps \left( 1 + \sup_{0\le s \le T} \left| \bar X_j^N(s) \right|^2 + \frac{1}{N}\sum_{j=1}^N \sup_{0\le s \le T} \left| \bar X_j^N(s) \right|^2 \right)  \\
	&\qquad + 4  \left| \int_t^{t+\eps} \sigma \left( \bar X_j^N(s), \bar V^N(s)\right)\,dW_j(s) \right|^2 + 4L^2 \eps \int_0^T \left|u_j^N(s) \right|^2\,ds. 
\end{align*}
Since $\tau$ is a bounded stopping time, the optional sampling theorem gives
$$E\left| \int_{\tau}^{\tau+\eps} \sigma \left( \bar X_j^N(s), \bar V^N(s)\right)\,dW_j(s) \right|^2  \le L^2 \eps,$$
 and so 
\begin{align*}
	&\frac{1}{N}\sum_{j=1}^NE\left[ \left| \bar X_j^N(\tau + \eps) - \bar X_j^N(\tau) \right|^2 \right] \\
	&\le 24(T+1)L^2  \eps \left( 1 + E\left[\frac{1}{N}\sum_{j=1}^N \left\| \bar X_j^N \right\|_\infty^2 \right] + E\left[\frac{1}{N}\sum_{j=1}^N \int_0^T \left|u_j^N(s) \right|^2\,ds \right] \right) . 
\end{align*}
The second estimate in the lemma now follows (with a possibly larger choice of $c$).
\end{proof}

\subsection{Proof of Lemma \ref{keytightness}}
\label{sectiontightnessproof}

The following general lemma will be useful in proving the tightness of $\{\bar\sJ^N\}$. The proof is standard (see e.g. \cite[Exercise 3.11.18]{EthKur}) and is therefore omitted.
\begin{lemma}\label{tightnessdecomp} Let $\{Z_k, k \in \N\}$ be a sequence of random variables taking values in a separable Banach space with norm $\|\cdot \|$. Suppose that for each $\eps > 0$ we can write $Z_k = Z_k^\eps + R_k^\eps$ for each $k \in \N$, where $\{Z_k^\eps, k \in \N\}$ is tight and 
$\sup_{k \ge 1} E\left[ \left\|R_k^\eps\right\|  \right] \le \eps.$
Then $\{Z_k\}$ is tight. 
\end{lemma}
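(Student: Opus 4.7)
The plan is to verify tightness via Prokhorov's theorem by exhibiting, for each $\delta>0$, a compact set $K\subset B$ with $\sup_k P(Z_k\notin K)\le \delta$, where $(B,\|\cdot\|)$ denotes the ambient separable Banach space. The natural idea is to combine, over a countable collection of scales $\eps_n\downarrow 0$, the compact sets witnessing tightness of the approximating sequences $\{Z_k^{\eps_n}\}_k$ with a Markov-bound control of the remainders $R_k^{\eps_n}$, and then take an intersection.

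Concretely, fix $\delta\in(0,1)$ and choose $\eps_n\doteq \delta/(n 2^{n+1})$ for $n\in\N$. Tightness of $\{Z_k^{\eps_n}\}_k$ supplies a compact set $K_n\subset B$ with $\sup_k P(Z_k^{\eps_n}\notin K_n)\le \delta 2^{-n-1}$, and Markov's inequality together with the hypothesis $E\|R_k^{\eps_n}\|\le \eps_n$ gives $\sup_k P(\|R_k^{\eps_n}\|>1/n)\le n\eps_n = \delta 2^{-n-1}$. Letting $\bar D_n$ denote the closed ball of radius $1/n$ centered at the origin, I would set
\[
	A_k \doteq \bigcap_{n\in\N}\bigl(\{Z_k^{\eps_n}\in K_n\}\cap \{\|R_k^{\eps_n}\|\le 1/n\}\bigr), \qquad K\doteq \bigcap_{n\in\N}\bigl(K_n+\bar D_n\bigr).
\]
A union bound gives $P(A_k^c)\le \delta$ uniformly in $k$, and on $A_k$ the decomposition $Z_k=Z_k^{\eps_n}+R_k^{\eps_n}$ places $Z_k$ in $K_n+\bar D_n$ for every $n$, so $Z_k\in K$ on $A_k$.

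The substantive step is to verify that $K$ is compact. It is closed, being the intersection of the closed Minkowski sums $K_n+\bar D_n$ (closed because $K_n$ is compact and $\bar D_n$ is closed). Since $B$ is complete, compactness reduces to total boundedness: given $\eta>0$, pick $n$ with $1/n<\eta/2$; a finite $\eta/2$-net for the compact set $K_n$ yields a finite $\eta$-net for $K_n+\bar D_n\supset K$. Prokhorov's theorem then delivers tightness of $\{Z_k\}$. The main point requiring care is really just this compactness verification together with the joint choice of scales---the Markov budget $\eps_n$ must shrink fast enough to keep the tail probabilities summable in $n$, while the enlargement radii $1/n$ must shrink for $K$ to be totally bounded; the specific choice above is convenient, but any pair of decay rates satisfying $\eps_n/r_n\to 0$ appropriately and $r_n\downarrow 0$ would work equally well.
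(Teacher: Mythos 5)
Your argument is correct and is precisely the standard proof that the paper omits (citing Ethier--Kurtz, Exercise 3.11.18): combine the compact sets for $\{Z_k^{\eps_n}\}$ with Markov bounds on the remainders over a summable sequence of scales, and check that the intersection $\bigcap_n (K_n+\bar D_n)$ is closed and totally bounded, hence compact. The only cosmetic remark is that Prokhorov's theorem is not needed -- exhibiting the compact set $K$ with $\sup_k P(Z_k\notin K)\le\delta$ verifies tightness by definition.
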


To prove tightness for the controlled stochastic currents, we will make use of a collection of test functions $\{g_M, M < \infty\}$ defined as follows. 

\begin{definition}\label{mollifiers} Let $\{g_M, M <\infty\}$ be a collection of functions in $\sC_c^\infty(\R^d, \R)$ that satisfy $0\le g_M(x) \le 1$ for all $M<\infty$ and $x \in \R^d$, and have the following properties
\begin{enumerate}[(i)]
\item For each $M$, $g_M(x) = 1$ on $|x| \le M$,

\item For each $M$, $g_M(x) = 0$ on $|x| \ge M+1$, and

\item For every $k \in \N$, there is a constant $B(k) < \infty$ such that $\left|D^\alpha g_M(x) \right| \le B(k)$ for all $x \in \R^d$, all $M < \infty$, and all $|\alpha| \le k$.
\end{enumerate}
\end{definition}
Note that if $\{g_M, M <\infty\}$ is a collection as in Definition \ref{mollifiers} then
for every $k \in \N$, there is a constant $L(k) < \infty$ such that 
\begin{equation}\label{eq:eq928}
	\left|D^\alpha g_M(x) - D^\alpha g_M(y) \right| \le L(k) |x-y|
\end{equation}
for all $x, y \in \R^d$, all $M <\infty$, and all $|\alpha| \le k$. 
We will need the following property of the collection $\{g_M, M <\infty\}$. Proof of the lemma is given in the Appendix. 
\begin{lemma}\label{gMnormbound} For any $s > 0$, there is a constant $K = K(s)<\infty$ such that for any $f \in H^s(\R^d, \R^d)$, 
\[
	\sup_{M<\infty} \left\|g_M f \right\|_{s} \le K \|f\|_{s}. 
\]
\end{lemma}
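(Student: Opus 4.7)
The plan is to reduce the uniform bound to two manageable cases, the integer-order case and the case $0<s<1$, via the standard decomposition $s = k+\theta$ with $k\in \N\cup\{0\}$ and $\theta \in [0,1)$. Throughout I would use that the Bessel potential norm \eqref{eq:FourierSobolevnorm} is equivalent (see e.g.\ \cite{NezPalVal}) to the classical Sobolev-Slobodeckij norm
\[
\|f\|_{H^s}^2 \;\asymp\; \sum_{|\alpha|\le k} \|D^\alpha f\|_{L^2}^2 \;+\; \sum_{|\alpha|=k} \int_{\R^d}\int_{\R^d} \frac{|D^\alpha f(x)-D^\alpha f(y)|^2}{|x-y|^{d+2\theta}}\,dx\,dy,
\]
with an equivalence constant that depends only on $s$ and $d$, so that proving the claim for this equivalent norm suffices.

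\textbf{Integer case.} When $s=k\in\N$, the Leibniz rule gives
\[
D^\alpha(g_M f) = \sum_{\beta\le\alpha} \binom{\alpha}{\beta} (D^\beta g_M)(D^{\alpha-\beta}f).
\]
Using property (iii) of Definition \ref{mollifiers}, $\|D^\beta g_M\|_\infty \le B(k)$ uniformly in $M$, so each $\|D^\alpha(g_M f)\|_{L^2}$ is controlled by a constant multiple (independent of $M$) of $\sum_{|\gamma|\le k}\|D^\gamma f\|_{L^2}$, yielding the bound.

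\textbf{Pure fractional case $0<s<1$.} The $L^2$ part is immediate since $|g_M|\le 1$. For the Gagliardo seminorm, I would use the pointwise identity
\[
g_M(x)f(x) - g_M(y)f(y) = g_M(x)(f(x)-f(y)) + f(y)(g_M(x)-g_M(y))
\]
and the elementary inequality $(a+b)^2 \le 2a^2+2b^2$. The first term yields a contribution bounded by $2[f]_{H^s}^2$. For the second, I would split the integration into the diagonal region $\{|x-y|\le 1\}$ and its complement. On the diagonal region I would invoke \eqref{eq:eq928} to bound $|g_M(x)-g_M(y)|\le L(0)|x-y|$, turning the integrand into $|f(y)|^2|x-y|^{2-d-2s}$, which is integrable in $x$ over $\{|x-y|\le 1\}$ because $s<1$; the $y$-integration then yields a constant times $\|f\|_{L^2}^2$. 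On the far region I would use $|g_M(x)-g_M(y)|\le 2$ together with the integrability of $|x-y|^{-d-2s}$ on $\{|x-y|>1\}$, again producing a constant times $\|f\|_{L^2}^2$. All constants are uniform in $M$.

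\textbf{General $s>0$.} Write $s=k+\theta$ with $k\ge 0$ integer and $\theta\in[0,1)$. Controlling the $H^k$ part of $g_M f$ was handled in the integer case. For the fractional seminorms of order $\theta$ applied to $D^\alpha(g_M f)$ with $|\alpha|=k$, I would expand with Leibniz, observe that each summand is a product of a smooth function (a derivative of $g_M$, uniformly bounded with uniformly Lipschitz derivatives by (iii) and \eqref{eq:eq928}) and a derivative $D^\gamma f$ of order $\le k$, and apply the same decomposition used in the pure fractional case to each such product, using that $\|D^\gamma f\|_{H^\theta}\le \|f\|_{H^s}$. Collecting everything gives $\|g_M f\|_s \le K\|f\|_s$ with $K=K(s,d)$ independent of $M$.

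The main obstacle is the $0<s<1$ case, specifically controlling the second term in the Gagliardo-type splitting near the diagonal; this is where the uniform-in-$M$ Lipschitz bound \eqref{eq:eq928} on $g_M$ is essential and where the restriction $s<1$ is used to secure integrability of $|x-y|^{2-d-2s}$ near the diagonal.
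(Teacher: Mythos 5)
Your proposal is correct and follows essentially the same route as the paper: decompose $s=k+\theta$, use the equivalent Sobolev--Slobodeckij norm, handle integer derivatives via Leibniz with the uniform bounds from Definition \ref{mollifiers}(iii), and control the Gagliardo seminorm of each product by the splitting $\psi(x)h(x)-\psi(y)h(y)=\psi(x)(h(x)-h(y))+(\psi(x)-\psi(y))h(y)$ together with the near/far diagonal decomposition using the uniform Lipschitz bound \eqref{eq:eq928}. The only cosmetic difference is that the paper bounds the lower-order fractional terms $\|D^\beta f\|_\theta$ ($|\beta|<k$) explicitly through $\|D^\beta f\|_{1}$ rather than citing the embedding directly, which amounts to the same thing.
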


The following is a simple extension of the well-known compact embedding result for Sobolev spaces on $\R^d$ known as Rellich's Theorem (see \cite[Theorem 9.22]{folland}). 
Although the proof is standard, we provide details in the Appendix.
For $\mathbf{s} \in \sO_d$, $F \in \mathbf{H}^{-\mathbf{s}}$, and open $U_0 \subset U$, we say $F=0$ on $U_0$ if for all $\varphi \in \sC_c^\infty(U \times \R^d, \R^d)$
with support in $U_0$, $\il F, \varphi \ir=0$. The support of $F$ is the complement of the union of all open sets in $U$ on which $F=0$.

\begin{lemma}\label{compactembedding} Let $\mathbf{s} = (s_1, s_2)$ and $\mathbf{s}' = (s'_1, s'_2)$ in $\sO_d$ be such that $s'_1 < s_1$ and $s'_2 < s_2$.
	Suppose  $A\subset\mathbf{H}^{-\mathbf{s}'}$ is such that for some
	compact $K \subset U \times \R^d$, every $F\in A$ has support contained in $K$. Suppose also that
	$\sup_{F\in A}  \|F\|_{-\mathbf{s}'} < \infty$. Then $A$ is relatively compact in $\mathbf{H}^{-\mathbf{s}}$.
\end{lemma}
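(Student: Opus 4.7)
The plan is to reduce the problem, via a smooth cutoff that exploits the common compact support of elements of $A$, to a Rellich-Kondrachov style compact embedding of test-function spaces, and then extract a convergent subsequence by pairing the resulting compactness with the Banach-Alaoglu theorem applied to the reflexive (Hilbert) space $\mathbf{H}^{-\mathbf{s}'}$.

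First, I would pick a cutoff $\chi \in \sC_c^\infty(U\times\R^d)$ with $\chi \equiv 1$ on an open neighborhood $V$ of the compact set $K$. Using the paper's notion of support: every $F\in A$ vanishes on the open set $(U\times\R^d)\setminus K$, and for any $\varphi \in \sC_c^\infty(U\times\R^d,\R^d)$ the function $(1-\chi)\varphi$ has support inside $V^c \subset (U\times\R^d)\setminus K$. Hence $\langle F, (1-\chi)\varphi\rangle = 0$ and therefore
\[
\|F\|_{-\mathbf{s}} = \sup_{\varphi\in \sC_c^\infty(U\times\R^d,\R^d)} \frac{|\langle F, \chi\varphi\rangle|}{\|\varphi\|_{\mathbf{s}}}, \qquad F\in A.
\]
Thus the question reduces to understanding the action of $F\in A$ on the subset $\{\chi\varphi : \|\varphi\|_{\mathbf{s}} \le 1\}$ of $\mathbf{H}^{\mathbf{s}'}$.

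The next step, which I expect to be the technical heart of the argument, is to show that the multiplication map $M_\chi:\mathbf{H}^{\mathbf{s}} \to \mathbf{H}^{\mathbf{s}'}$ defined by $M_\chi\varphi = \chi\varphi$ is a \emph{compact} operator. Boundedness of $M_\chi:\mathbf{H}^{\mathbf{s}}\to\mathbf{H}^{\mathbf{s}}$ follows from the smoothness and compact support of $\chi$, by arguments parallel to those establishing Lemma \ref{gMnormbound}. The image $M_\chi(\mathbf{H}^{\mathbf{s}})$ consists of functions supported in the compact set $\mathrm{supp}(\chi)\subset U\times\R^d$, so one can then invoke the Rellich-Kondrachov compact embedding for fractional Sobolev spaces on bounded sets: in the spatial variable, the strict inequality $s_2 > s'_2$ together with the spatial compact support gives compactness of $H^{s_2}(\R^d,\R^d)\hookrightarrow H^{s'_2}(\R^d,\R^d)$ restricted to functions supported in a fixed compact; in the time variable, $s_1 > s'_1$ and the bounded interval $U$ yield compactness of the corresponding fractional time-Sobolev embedding. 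Combining these (working in the mixed Gagliardo-Bochner norm defining $\mathbf{H}^{\mathbf{s}}$) shows that $M_\chi$ is compact. Consequently, the set $\clc \doteq \{\chi\varphi : \|\varphi\|_{\mathbf{s}}\le 1\}$ is relatively compact in $\mathbf{H}^{\mathbf{s}'}$.

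Finally, given a sequence $\{F_n\}\subset A$, which by hypothesis is bounded in the reflexive Hilbert space $\mathbf{H}^{-\mathbf{s}'}$, Banach-Alaoglu extracts a weakly convergent subsequence $F_{n_k}\rightharpoonup F$. The weak limit $F$ still has support in $K$, since $\langle F, \varphi\rangle = \lim_k \langle F_{n_k}, \varphi\rangle = 0$ for every test function $\varphi$ with support disjoint from $K$. A standard $\epsilon$-net argument (covering $\clc$ by finitely many $\mathbf{H}^{\mathbf{s}'}$-balls of radius $\epsilon$, using weak convergence at the centers, and controlling the remainder by the uniform bound on $\|F_{n_k}-F\|_{-\mathbf{s}'}$) upgrades weak convergence to uniform convergence on the relatively compact set $\clc$:
\[
\sup_{\|\varphi\|_{\mathbf{s}}\le 1} |\langle F_{n_k} - F, \chi\varphi\rangle| \longrightarrow 0.
\]
Applying the reduction of the first paragraph to $F_{n_k}-F$ (which still has support in $K$), this is exactly $\|F_{n_k}-F\|_{-\mathbf{s}}\to 0$, proving relative compactness of $A$ in $\mathbf{H}^{-\mathbf{s}}$. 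The main obstacle, as noted, is verifying the Rellich-type compactness of $M_\chi$ for the specific mixed space $\mathbf{H}^{\mathbf{s}}$; everything else is soft functional analysis.
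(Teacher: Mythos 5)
Your overall architecture is sound and is essentially the dual formulation of what the paper does: reduce to the action of $F$ on $\{\chi\varphi\}$, show that the relevant multiplication/embedding operator is compact, and then upgrade a weakly convergent subsequence (Banach--Alaoglu) to norm convergence in $\mathbf{H}^{-\mathbf{s}}$ via an $\epsilon$-net over the relatively compact image. The cutoff reduction and the soft functional-analytic closing steps are correct. However, the step you yourself flag as the technical heart --- compactness of $M_\chi:\mathbf{H}^{\mathbf{s}}\to\mathbf{H}^{\mathbf{s}'}$ --- contains a genuine gap: it cannot be obtained by ``combining'' the one-variable Rellich embeddings in time and in space. The spatial compact embedding $H^{s_2}\hookrightarrow H^{s_2'}$ (on a fixed compact) does \emph{not} lift to a compact map $H^{s_1}(U,H^{s_2})\to H^{s_1}(U,H^{s_2'})$, for the same reason that $L^2(U,\iota)$ is not compact when $\iota$ is a compact operator on an infinite-dimensional space; symmetrically, the temporal embedding $H^{s_1}(U,\cdot)\hookrightarrow H^{s_1'}(U,\cdot)$ is not compact for Hilbert-space-valued functions. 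So neither factor in your proposed composition is compact, and the joint compactness must be proved by an argument that uses the gain in both indices simultaneously (an Aubin--Lions--Simon type lemma, or a Fourier-weight argument).

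The paper supplies exactly this missing joint argument, working directly on the distribution side. For $F$ supported in the compact set $K$, the Fourier coefficients $\hat F_k(n,\xi)=\il F, e^k_{-n,-\xi}\ir$ are smooth in $\xi$ with bounds (uniform over the bounded family $A$) on $\hat F$ and $D_\xi\hat F$ for each $n$; Arzel\`a--Ascoli plus a diagonal extraction gives a subsequence with $\hat F^{l_j}(n,\cdot)$ converging locally uniformly for every $n$. The equivalent norm
\[
\sum_{n\in\Z}\int_{\R^d}\bigl|\hat F(n,\xi)\bigr|^2\left(1+n^2\right)^{-s_1}\left(1+|\xi|^2\right)^{-s_2}\,d\xi
\]
is then shown to be Cauchy along the subsequence by splitting into the region $|n|\le M$, $|\xi|\le R$ (controlled by the pointwise convergence) and the tails, where the weight ratios $\left(1+(M+1)^2\right)^{-(s_1-s_1')}$ and $\left(1+R^2\right)^{-(s_2-s_2')}$ multiply the uniform $\mathbf{H}^{-\mathbf{s}'}$ bound. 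If you want to keep your operator-theoretic formulation, you would need to prove compactness of $M_\chi$ by an analogous two-parameter weight argument on the test-function side; as written, the appeal to two separate Rellich theorems does not close the proof.
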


Finally, the lemma below establishes the required tightness for the controlled currents. 

\begin{lemma} \label{lem:tightjn}
	Suppose Conditions \ref{maincondition} and \ref{initialcondition} are satisfied.
	Let $\{g_M, M <\infty\}$ be the collection of functions in $\sC_c^\infty(\R^d, \R)$ as in Definition \ref{mollifiers}. For each $N \in \N$, $M<\infty$, and $\varphi \in \sC_c^\infty(U \times \R^d, \R^d)$, define 
\[
	\bar J^{N,M}(\varphi) \doteq \bar J^N(g_M\varphi), \qquad \bar J_c^{N,M}(\varphi) \doteq \bar J^N(\varphi) - \bar J^{N,M}(\varphi). 
\]
Then, the collections $\{\varphi \mapsto \bar J^{N,M}(\varphi)\}$ and 
$\{\varphi \mapsto \bar J^{N,M}_c(\varphi)\}$ 
have pathwise realizations $\bar\sJ^{N,M}, \bar\sJ_c^{N,M}$ in $\mathbf{H}^{-\mathbf{s}}$  for all $\mathbf{s} \in \sO_d$. Furthermore, if 
\[
	\sup_{N \ge 1} E\left[ \frac{1}{N}\sum_{j=1}^N \int_0^T \left| u_j^N(t) \right|^2\,dt \right] < \infty, 
\]
then for all $\mathbf{s} \in \sO_d$,
\[
	\sup_{M<\infty}\sup_{N\ge1} E \left[\left\| \bar\sJ^{N,M} \right\|_{-\mathbf{s}} \right] < \infty 
\]
and 
\[
	\lim_{M\to\infty} \sup_{N\ge1} E\left[\left\|\bar\sJ_c^{N,M} \right\|_{-\mathbf{s}}\right] = 0. 
\]
In particular, $\{\bar\sJ^N, N \in \N\}$ is tight in $\mathbf{H}^{-\mathbf{s}}$ for all $\mathbf{s} \in \sO_d$. 
\end{lemma}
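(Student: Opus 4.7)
The plan is to verify existence of pathwise realizations, establish the two expectation bounds, and then combine Lemma \ref{tightnessdecomp} with Lemma \ref{compactembedding} to conclude tightness. I would begin by observing that Lemma \ref{gMnormbound} lifts from $H^{s_2}(\R^d,\R^d)$ to the full Gagliardo norm on $\mathbf{H}^{\mathbf{s}}$ by applying it slicewise in the time variable: since multiplication by $g_M$ commutes with the time differences in the second term of \eqref{eq:HstNorm}, one gets $\|g_M\varphi\|_{\mathbf{s}}\le K\|\varphi\|_{\mathbf{s}}$ and hence $\|(1-g_M)\varphi\|_{\mathbf{s}}\le (1+K)\|\varphi\|_{\mathbf{s}}$. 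Feeding these into Lemma \ref{JbarReal} yields pathwise realizations $\bar\sJ^{N,M}, \bar\sJ_c^{N,M}\in\mathbf{H}^{-\mathbf{s}}$ together with the pointwise bounds $\|\bar\sJ^{N,M}\|_{-\mathbf{s}}\le K C^N_{\mathbf{s}}$ and $\|\bar\sJ_c^{N,M}\|_{-\mathbf{s}}\le (1+K) C^N_{\mathbf{s}}$ a.s.; since $\sup_N E[(C^N_{\mathbf{s}})^2]<\infty$ under the moment hypothesis by Lemma \ref{JbarReal}, the first bound immediately gives $\sup_{M,N} E\|\bar\sJ^{N,M}\|_{-\mathbf{s}}<\infty$.

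The heart of the proof is the uniform convergence $\lim_{M\to\infty}\sup_N E\|\bar\sJ_c^{N,M}\|_{-\mathbf{s}}=0$. Here I would exploit the per-particle structure: because $(1-g_M)(x)=0$ for $|x|\le M$, the integrand in $\bar J_{c,j}^{N,M}(\varphi)=\int_0^T(1-g_M)(\bar X_j^N(t))\varphi(t,\bar X_j^N(t))\circ d\bar X_j^N(t)$ vanishes pathwise on $E_{j,M}\doteq\{\sup_{0\le t\le T}|\bar X_j^N(t)|\le M\}$. Consequently the per-particle pathwise realization implicit in Lemma \ref{JbarReal} satisfies $\bar\sJ_{c,j}^{N,M}\equiv 0$ on $E_{j,M}$ (this requires extracting the indicator from the Ito-plus-correction expansion of the Stratonovich integral used in the proof of Lemma \ref{JbarReal}, and is the main technical step). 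Combined with the pointwise bound this gives $\|\bar\sJ_{c,j}^{N,M}\|_{-\mathbf{s}}\le (1+K)C_{j,\mathbf{s}}^N\mathbf{1}_{E_{j,M}^c}$, and Cauchy--Schwarz together with the moment bound from Lemma \ref{JbarReal} and Markov's inequality applied to Lemma \ref{L2bound} yield
\[
\sup_N E\|\bar\sJ_c^{N,M}\|_{-\mathbf{s}}\le \tilde K\Bigl(\tfrac{1}{M^2}\sup_N E\tfrac{1}{N}\!\sum_j\|\bar X_j^N\|_\infty^2\Bigr)^{1/2}\xrightarrow[M\to\infty]{}0.
\]

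For tightness of $\{\bar\sJ^N\}$, I would apply Lemma \ref{tightnessdecomp} with $\bar\sJ^N=\bar\sJ^{N,M}+\bar\sJ_c^{N,M}$; the previous step handles the remainder, so it remains to show $\{\bar\sJ^{N,M}\}_N$ is tight in $\mathbf{H}^{-\mathbf{s}}$ for each fixed $M$. Since $\sO_d$ is an open rectangle, choose $\mathbf{s}'=(s_1',s_2')\in\sO_d$ with $s_1'<s_1$ and $s_2'<s_2$. Each $\bar\sJ^{N,M}$ is supported in the compact set $[0,T]\times\bar B(0,M+1)\subset U\times\R^d$ (as $g_M$ vanishes outside $\bar B(0,M+1)$), and the first paragraph applied at scale $\mathbf{s}'$ gives $\|\bar\sJ^{N,M}\|_{-\mathbf{s}'}\le K' C^N_{\mathbf{s}'}$ with $\sup_N E[(C^N_{\mathbf{s}'})^2]<\infty$. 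By Markov's inequality these norms are bounded in probability uniformly in $N$, and Lemma \ref{compactembedding} then yields the required relative compactness in $\mathbf{H}^{-\mathbf{s}}$. The main obstacle is the pathwise vanishing claim on $E_{j,M}$: once that per-particle reduction is secured from the proof of Lemma \ref{JbarReal} in the appendix, the rest is routine.
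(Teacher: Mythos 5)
Your proposal follows essentially the same route as the paper: the slicewise application of Lemma \ref{gMnormbound} to get $\|g_M\varphi\|_{\mathbf{s}}\le K\|\varphi\|_{\mathbf{s}}$, the per-particle bounds $|\bar J_j^N((1-g_M)\varphi)|\le(1+K)C^N_{j,\mathbf{s}}\|\varphi\|_{\mathbf{s}}$ from Lemma \ref{JbarReal}, the extraction of the indicator of $\{\sup_t|\bar X_j^N(t)|>M\}$ (the paper phrases this via the stopping times $\tau_j^{N,M}$ and the local property of the Stratonovich integral, which is exactly the "pathwise vanishing" step you flag), Cauchy--Schwarz plus Markov and Lemma \ref{L2bound} for the $O(1/M)$ decay, and finally compact support plus Lemma \ref{compactembedding} at a smaller exponent $\mathbf{s}'$ combined with Lemma \ref{tightnessdecomp}. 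The argument is correct as outlined; the step you single out as the main obstacle is handled in the paper by the elementary identity $\bar J_j^N((1-g_M)\varphi)=1_{\{\tau_j^{N,M}<T\}}\bar J_j^N((1-g_M)\varphi)$, which holds a.s. because the integrand vanishes identically along paths that never exit $\{|x|\le M\}$.
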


\begin{proof} Fix $\mathbf{s} = (s_1, s_2) \in \sO_d$, and for each $N$ and $1\le j \le N$, let $C^N_{j,\mathbf{s}}$ be the square-integrable random variable from Lemma \ref{JbarReal}, so that $|\bar J^N(\varphi)| \le C^N_{\mathbf{s}}\|\varphi\|_{\mathbf{s}}$ a.s. for all $\varphi \in \sC_c^\infty(U \times \R^d, \R^d)$, where $C^N_{\mathbf{s}} = \frac{1}{N}\sum_{j=1}^N C^N_{j, \mathbf{s}}$. As a consequence of Lemma \ref{gMnormbound}, for some constant $K = K(s_2)<\infty$, we have, for all $\varphi \in \sC_c^\infty(U \times \R^d, \R^d)$ and $M<\infty$,
\begin{align}
	\left\|g_M \varphi \right\|^2_{\mathbf{s}} &= \int_{U} \|g_M\varphi(u, \cdot)\|^2_{s_2}\,du + \int_{U}\int_{U} \frac{\|g_M(\varphi(u, \cdot) - \varphi(v, \cdot))\|^2_{s_2}}{|u-v|^{1+2s_1}}\,du\,dv \le K^2 \|\varphi\|^2_{\mathbf{s}}. \label{eq:eq222r}
\end{align}
Hence,
\[
	\left|\bar J^{N,M}(\varphi) \right| \le C^N_{\mathbf{s}}\|g_M \varphi\|_{\mathbf{s}} \le KC^N_{\mathbf{s}} \|\varphi\|_{\mathbf{s}} \qquad \mbox{a.s.},  
\]
and 
\[
	\left| \bar J_c^{N,M}(\varphi) \right| = \left| \bar J^N((1-g_M)\varphi)\right| \le C^N_{\mathbf{s}} \|(1-g_M)\varphi\|_{\mathbf{s}} \le (1+K)C^N_{\mathbf{s}} \|\varphi\|_{\mathbf{s}} \qquad\mbox{a.s.}
\]
From \cite[Lemma 5]{flagubgiator} it then follows that, for every $M<\infty$, there are 
$\mathbf{H}^{-\mathbf{s}}$-valued random variables  $\bar \sJ^{N,M}$ and $\bar \sJ_c^{N,M}$
such that, for every $\varphi \in \sC_c^\infty(U \times \R^d, \R^d)$ and $M<\infty$,
$$\left\il \bar \sJ^{N,M}(\omega), \varphi \right\ir = \left[J^{N,M}(\varphi)\right](\omega)\qquad \mbox{and}\qquad \left\il \bar \sJ_c^{N,M}(\omega), \varphi \right\ir = \left[J_c^{N,M}(\varphi)\right](\omega), \qquad \mbox{a.e. } \omega \in \Omega.$$
Then, from Lemma \ref{JbarReal}, 
\begin{equation}\label{eq:tightnesscond1}
	\sup_{M<\infty}\sup_{N\ge 1} E \left[\left\|\bar \sJ^{N,M} \right\|_{-\mathbf{s}}^2\right]  \le K^2\sup_{N \ge 1} E\left[ \left(C^N_{\mathbf{s}} \right)^2 \right] < \infty. 
\end{equation}
Let $\bar J^N_j$ be as in \eqref{eq:Jbarj} and define the stopping times $\tau_j^{N,M} = \inf\{t > 0 : |\bar X_j^N(t)| \ge M\}$. Then, 
\[
	\bar J_c^{N,M}(\varphi) = \frac{1}{N}\sum_{j=1}^N \bar J_j^N((1-g_M)\varphi) = \frac{1}{N}\sum_{j=1}^N 1_{\left\{ \tau_j^{N,M} < T \right\}} \bar J^N_j((1-g_M)\varphi), 
\]
and by Lemma \ref{JbarReal}, 
\[
	\left| \bar J_j^N((1-g_M)\varphi) \right| \le C^N_{j, \mathbf{s}} \|(1-g_M)\varphi\|_{\mathbf{s}} \le (1+K) C^N_{j, \mathbf{s}} \|\varphi\|_{\mathbf{s}}. 
\]
Thus, 
\begin{equation}
\label{eq:eq307}	
\left|\bar J_c^{N,M}(\varphi) \right| \le \left(\frac{1+K}{N} \sum_{j=1}^N 1_{\left\{ \tau_j^{N,M} < T \right\}}  C^N_{j, \mathbf{s}} \right) \|\varphi\|_{\mathbf{s}} \doteq \tilde C^N_{\mathbf{s}}\|\varphi\|_{\mathbf{s}}.
\end{equation}
Also, by the Cauchy-Schwarz inequality,
\begin{equation*}
E\left[\left(\tilde C^N_{\mathbf{s}}\right)^2\right] \le \frac{(1+K)^2}{N}\left(\sum_{j=1}^N  P\left( \tau_j^{N,M} < T \right) \right)\left(
\frac{1}{N}\sum_{j=1}^N E\left[\left(C^N_{j, \mathbf{s}} \right)^2\right] \right).
\end{equation*}
By Lemma \ref{L2bound}, Condition \ref{initialcondition}, and the assumption that $\sup_{N\in \N} E\left[\frac{1}{N}\sum_{j=1}^N \int_0^T \left|u_j^N(t)\right|^2\,dt\right] < \infty$, there is a constant $\tilde K < \infty$ such that 
\[
	\sup_{N\ge1} \frac{1}{N}\sum_{j=1}^N P\left( \tau_j^{N,M} < T \right) \le \sup_{N\ge1}\frac{1}{N}\sum_{j=1}^N P\left( \left\| \bar X_j^N\right\|_\infty \ge M \right) \le \frac{\tilde K}{M^2}.
\]
Thus, 
\begin{align*}
	E\left[\left\| \bar \sJ_c^{N,M} \right\|_{-\mathbf{s}}^2\right] \le  E\left[\left(\tilde C^N_{\mathbf{s}}\right)^2\right] \le
	\frac{\tilde K (1+K)^2}{M^2} \sup_{N \ge 1}\frac{1}{N}\sum_{j=1}^N 
		E\left[\left(C^N_{j, \mathbf{s}}\right)^2\right],
\end{align*}
and therefore, from Lemma \ref{JbarReal},
\begin{equation}\label{eq:eq324}\lim_{M\to \infty} \sup_{N\ge 1} E\left[\left\| \bar \sJ_c^{N,M} \right\|_{-\mathbf{s}}^2\right]  =0.\end{equation}

Note that \eqref{eq:tightnesscond1} and \eqref{eq:eq324} are satisfied for every $\mathbf{s} \in \sO_d$. Now for an arbitrary $\mathbf{s}\in \sO_d$, choose $\mathbf{s}' = (s'_1, s'_2)\in \sO_d$ such that
$s'_1< s_1$ and $s'_2 < s_2$. Then applying \eqref{eq:tightnesscond1} for $\mathbf{s}'$ and observing that
$\{\bar \sJ^{N,M}, N \in \N\}$ are compactly supported on $[0,T] \times \{|x| \le M+1\} \subset U \times \R^d$, we see from Lemma \ref {compactembedding} and Markov's inequality that for each fixed $M$, $\{\bar \sJ^{N,M}, N \in \N\}$ is a tight collection of
 $\mathbf{H}^{-\mathbf{s}}$-valued random variables.
Finally, observing that $\bar \sJ^N = \bar \sJ^{N,M} + \bar \sJ_c^{N,M}$ for each $M$
and applying \eqref{eq:eq324} and Lemma \ref{tightnessdecomp}, we obtain that
 $\{\bar \sJ^N, N \in \N\}$ is tight in $\mathbf{H}^{-\mathbf{s}}$. 
\end{proof}

The following general lemma will be useful in proving tightness of  $\{\bar \mu^N\}$.

\begin{lemma}\label{P1tightness}
	Let $(S, d_S)$ be a Polish space. 
	If $\{\gamma_k, k \in \N\}$ is a tight sequence of $\clp(S)$-valued random variables and
	for some $x_0 \in S$ 
	\begin{equation}\label{eq:eq225r}
		\sup_{k\in \N} E\left[\int_S d_S(x,x_0)^2\,\gamma_k(dx) \right] < \infty, 	
	\end{equation}
	then $\{\gamma_k\}$ is tight as a sequence of $\clp_1(S)$-valued random variables. 
\end{lemma}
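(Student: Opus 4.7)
The plan is to invoke the standard characterization of relative compactness in the Wasserstein-1 space (see e.g.\ \cite{Vil}): a set $\mathcal{K}\subset \sP_1(S)$ is relatively compact if and only if (a) $\mathcal{K}$ is tight as a subset of $\sP(S)$ and (b)
\begin{equation*}
\lim_{R \to \infty} \sup_{\mu \in \mathcal{K}} \int_S (d_S(x,x_0)-R)^+\,\mu(dx) = 0.
\end{equation*}
Given $\eps > 0$, I will construct a Borel set $\mathcal{K}_\eps \subset \sP(S)$ satisfying both (a) and (b) and with $\sup_k P(\gamma_k \notin \mathcal{K}_\eps) \le \eps$; its closure in $\sP_1(S)$ will then be the required compact set.

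First, by the hypothesized tightness of $\{\gamma_k\}$ in $\sP(S)$, choose a compact $\mathcal{K}_0 \subset \sP(S)$ with $\sup_k P(\gamma_k \notin \mathcal{K}_0) \le \eps/2$. Next, letting $C \doteq \sup_k E[\int_S d_S(x,x_0)^2\,\gamma_k(dx)] < \infty$, the pointwise estimate $(r-R)^+ \le r\,\mathbf{1}_{\{r > R\}} \le r^2/R$ yields $E[\int_S (d_S(x,x_0)-R)^+\,\gamma_k(dx)] \le C/R$ uniformly in $k$, so for each $n \in \N$ I can choose $R_n$ large enough that Markov's inequality gives
\begin{equation*}
P\!\left(\int_S (d_S(x,x_0)-R_n)^+\,\gamma_k(dx) > 1/n\right) \le \eps\,2^{-n-1}
\end{equation*}
uniformly in $k$. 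Define
\begin{equation*}
A_n \doteq \left\{\mu \in \sP(S) : \int_S (d_S(x,x_0)-R_n)^+\,\mu(dx) \le 1/n\right\}
\end{equation*}
and set $\mathcal{K}_\eps \doteq \mathcal{K}_0 \cap \bigcap_{n \ge 1} A_n$. A union bound then yields $\sup_k P(\gamma_k \notin \mathcal{K}_\eps) \le \eps$.

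To close the argument, observe that $\mathcal{K}_\eps \subset \mathcal{K}_0$ is tight in $\sP(S)$, and for any $\delta > 0$, choosing $n$ with $1/n < \delta$ gives $\sup_{\mu \in \mathcal{K}_\eps} \int_S (d_S(x,x_0)-R_n)^+\,\mu(dx) \le 1/n < \delta$, which verifies (b); hence the closure of $\mathcal{K}_\eps$ in $\sP_1(S)$ is compact. Since $\gamma_k$ takes values in $\sP_1(S)$, this yields tightness of $\{\gamma_k\}$ in $\sP_1(S)$. The proof has no substantive obstacle; the technical care lies in (i) using the continuous truncation $(r-R)^+$ rather than $r\mathbf{1}_{\{r > R\}}$ so that each $A_n$ is closed (hence Borel) in $\sP(S)$ by the portmanteau theorem, and (ii) invoking the $\sP_1$-version of relative compactness in place of the weak-topology version.
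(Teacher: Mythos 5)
Your proof is correct, but it takes a genuinely different route from the paper's. You verify tightness directly from the definition: using the deterministic characterization of relatively compact subsets of $\sP_1(S)$ (tightness in $\sP(S)$ plus uniform integrability of first moments), you build, for each $\eps>0$, an explicit set $\mathcal{K}_\eps$ via Markov's inequality and a union bound over the truncation levels $R_n$, and take its $d_1$-closure. The paper instead argues through subsequences: it takes a subsequence converging in distribution in $\sP(S)$, passes to a Skorohod representation so that $\dbl(\gamma_k,\gamma)\to 0$ a.s., and then shows $E[d_1(\gamma_k,\gamma)]\to 0$ by truncating $1$-Lipschitz test functions at level $M$ and bounding the error by $\frac{2}{M}\sup_k E[\int d_S(x,x_0)^2\,\gamma_k(dx)]$; tightness in $\sP_1(S)$ then follows since every subsequence admits a further subsequence convergent in distribution in the Polish space $\sP_1(S)$. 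Your argument is the more elementary and self-contained verification of tightness (modulo citing the $W_1$ compactness criterion, for which your observations about closedness of the sets $A_n$ and the fact that $\mathcal{K}_\eps\subset\sP_1(S)$ are exactly the right points of care); the paper's argument trades that citation for Prokhorov's converse on $\sP_1(S)$ and additionally produces the quantitative upgrade from $\dbl$-convergence to $L^1$-convergence in $d_1$, which is in the same spirit as arguments used elsewhere in the paper. Both proofs use the second-moment hypothesis in the same way, to control the tail $\int_{\{d_S(x,x_0)>M\}} d_S(x,x_0)\,\gamma_k(dx)$ by $C/M$.
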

\begin{proof}
	Suppose that $\gamma_k$ converges in distribution, along a subsequence, in $\clp(S)$ to some $\gamma$, and denote the convergent subsequence once more as $\{\gamma_k\}$. From \eqref{eq:eq225r} it follows
	that each $\gamma_k$ is in $\sP_1(S)$ a.s.
	Furthermore, by lower semicontinuity of the map $\mu \mapsto \int_{S}  d_S(x,x_0)^2\, \mu (dx)$ on $\sP(S)$ and Fatou's lemma, we see that
	$$E\left[\int_{S} d_S(x,x_0)^2 \, \gamma(dx) \right] \le E\left[ \liminf_{k \to \infty} \int_S d_S(x,x_0)^2\,\gamma_k(dx) \right] \le \sup_{k\ge 1} E\left[\int_{S} d_S(x,x_0)^2 \, \gamma_k(dx) \right]  < \infty, $$ 
	and  so in particular  $\gamma \in \clp_1(S)$ a.s.
	By appealing to Skorohod's representation theorem
	we can assume that $\gamma_k \to \gamma$ a.s. in $\clp(S)$. Recalling from Section \ref{sec:notat} 
	the metric $\dbl$ on the space $\sP(S)$, we have that $\dbl(\gamma_k, \gamma) \to 0$ a.s. 
 	
	It suffices now to show that $\gamma_k$ converges in probability in $\clp_1(S)$ to $\gamma$. 
	Take $f \in \sL(S)$ such that $f(x_0)=0$. 
	Fix $1<M<\infty$ and define 
	\[
		f_M(x) \doteq \left(\frac{f(x)}{M} \vee (-1) \right) \wedge 1, 
	\]
	which is a function bounded by $1$ in absolute value whose Lipschitz constant is also bounded by $1$.
	Then,
	\begin{align*}
		&\left|\int_{S} f(x)\,\gamma_k(dx)- \int_{S} f(x)\,\gamma(dx)\right| \\
		&\le
		M\left|\int_{S} f_M(x)\,\gamma_k(dx)- \int_{S} f_M(x)\,\gamma(dx)\right| 
		+\int_{S} \left|Mf_M(x)-f(x)\right|\,\gamma_k(dx) + \int_{S} \left|Mf_M(x)-f(x)\right|\gamma(dx)  \\
		&\le M\dbl(\gamma_k, \gamma) + \int_{S}2 |f(x)|1_{\left\{|f(x)| > M\right\}}\,\gamma_k(dx) + \int_{S} 2|f(x)|1_{\left\{|f(x)| > M\right\}}\,\gamma(dx). 
		\end{align*}
Since the Lipschitz constant of $f$ is bounded by $1$ and $f(x_0)=0$, we have that
$|f(x)| \le d_S(x,x_0)$, and so
$$ \int_{S} |f(x)|1_{\left\{|f(x)| > M\right\}}\,\gamma_k(dx) \le  \frac{1}{M}\int_{S}  d_S(x,x_0)^2\,\gamma_k(dx),
$$
and the equivalent inequality holds for $\gamma$.
Now, since $\il  \mu, f \ir - \il \nu, f \ir = \il \mu,  f - f(x_0) \ir - \il \nu,  f - f(x_0) \ir$ for any $\mu, \nu \in \sP_1(S)$ and $f \in \sL(S)$, the supremum in the definition of $d_1$ can be restricted to $f$ such that $f(x_0) = 0$. Thus, 
\begin{align*}
	E\left[d_1(\gamma_k, \gamma)\right] &= E\left[\sup_{f\in \sL(S), f(x_0) = 0} \left| \int_S f(x)\,\gamma_k(dx) -\int_S f(x)\,\gamma(dx) \right| \right] \\
	&  \le ME\left[\dbl(\gamma_k, \gamma) \right] + \frac{2}{M}\sup_{l \in \N} E\left[\int_{S} d_S(x,x_0)^2 \,\gamma_l(dx) \right]
	+ \frac{2}{M}E\left[\int_{S} d_S(x,x_0)^2 \, \gamma(dx) \right].
\end{align*}
Sending first $k \to \infty$ and then $M \to \infty$, we have that $\lim_{k\to\infty}E\left[d_1(\gamma_k, \gamma) \right] = 0$ which completes the proof.
\end{proof}

We can now complete the proof of Lemma \ref{keytightness}. \\

\subsubsection{Proof of Lemma \ref{keytightness}(i)}

We begin by arguing that $\{\bar \mu^N\}$ is a tight sequence of $\sP(\sX)$-valued random variables.
For this it suffices to show (see \cite[Theorem 2.11]{BudDup_book}) that
$\{\gamma^N, N\in \N\}$ is a relatively compact set in $\sP(\sX)$, where
$$\gamma^N \doteq E\left[\bar \mu^N\right] = \frac{1}{N} \sum_{j=1}^N P\left(\bar X^N_j \in \cdot\right).$$
Note  that 
$$\int_\sX \|\psi\|_\infty^2\,\gamma^N(d\psi) =
\frac{1}{N}\sum_{j=1}^N E\left[ \left\|\bar X_j^N\right\|_\infty^2 \right], 
$$
and so by Lemma \ref{L2bound} and the assumption on the controls in Lemma \ref{keytightness}, we see that
\begin{equation}\label{eq:eq545}
\sup_{N\ge 1} \int_\sX \|\psi\|_\infty^2\,\gamma^N(d\psi)  = \sup_{N\ge 1} \frac{1}{N}\sum_{j=1}^N E\left[ \left\|\bar X_j^N\right\|_\infty^2 \right] < \infty.
\end{equation}

Next, for  $\eps > 0$  let $\sT_\eps$ denote the collection of all $\{\sigma(X(s) : s\le t)\}$-stopping times on $(\sX, \sB(\sX))$ taking values in $[0, T - \eps]$ where
$\{X(t)\}$ is the canonical coordinate process on $\sX$. 
Then for each $N\in \N$, there are $\{\sigma(\bar X^N_j(s) : s\le t)\}$-stopping times $\{\tau_j^N, 1\le j \le N\}$ on $(\Omega, \sF)$ with values in $[0, T-\eps]$, such that 
\[ \int_{\sX} \left|\psi(\tau + \eps) - \psi(\tau)\right|^2\,\gamma^N(d\psi)  = \frac{1}{N}\sum_{j=1}^N E\left[ \left| \bar X_j^N\left(\tau_j^N + \eps\right) - \bar X_j^N\left(\tau_j^N\right) \right|^2\right]. 
\]
Applying Lemma \ref{L2bound}, we then have 
\[
	 \int_{\sX} \left|\psi(\tau + \eps) - \psi(\tau)\right|^2\,\gamma^N(d\psi)  \le c\eps\left( 1 + \sup_{N\ge1}\frac{1}{N}\sum_{j=1}^N \left|x_j^N\right|^2 + \sup_{N\ge1} E\left[ \frac{1}{N}\sum_{J=1}^N \int_0^T \left|u_j^N(t) \right|^2\,dt\right] \right),
\]
and hence  
\begin{equation}\label{eq:eq553}
	\lim_{\eps\to0} \sup_{N\ge1} \sup_{\tau \in \sT_\eps}  \int_{\sX} \left|\psi(\tau + \eps) - \psi(\tau)\right|^2\,\gamma^N(d\psi)  = 0. 
\end{equation}
The relative compactness of $\{\gamma^N, N\in \N\}$ in $\sP(\sX)$ is immediate from
\eqref{eq:eq545} and \eqref{eq:eq553} (see \cite[Theorem D.4]{BudDup_book}), which as noted previously shows $\{\bar \mu^N\}$ is a tight sequence of $\sP(\sX)$-valued random variables.
The tightness of $\{\bar \mu^N\}$ as a sequence of $\sP_1(\sX)$-valued random variables now follows from Lemma \ref{P1tightness} and the uniform moment estimate in \eqref{eq:eq545}.
Note also that since $\bar \mu^N = Q^N_{(1)}$, we have the tightness of the first marginals of $\{Q^N\}$ (as a sequence of $\sP(\sX)$-valued random variables).

That the second marginals $\{Q^N_{(2)}\}$ is a tight  sequence of $\sP(\sR_1)$-valued random variables follows by  an argument similar to \cite[Lemma 5.1]{BudDupFis} however we provide the details.
Note that the function 
\[
	h(r) = \int_{[0,T]\times\R^m}|y|^2\,r(dt, dy)
\]
has compact level sets on $\sR_1$ (recall that $\sR_1$ is equipped with the Wasserstein-1 metric). It then follows that 
\[
	H(\theta) = \int_{\sR_1} h(r)\,\theta(dr)
\]
has relatively compact level sets on $\sP(\sR_1)$ (see  \cite[Lemma 2.10]{BudDup_book}).  It now suffices to show $\sup_{N \ge 1} E[H(Q^N_{(2)})] < \infty$ (see \cite[Lemmas 2.9]{BudDup_book}). However this is immediate as
\begin{equation}\label{eq:supHQfinite}
	\sup_{N\ge1}E\left[ H\left(Q^N_{(2)}\right) \right] = \sup_{N\ge1}E\left[ \frac{1}{N}\sum_{j=1}^N \int_{[0,T]\times\R^m}|y|^2\rho_j^N(dt,dy) \right] 
	= \sup_{N\ge1} E\left[ \frac{1}{N}\sum_{j=1}^N \int_0^T \left|u_j^N(t)\right|^2\,dt \right] < \infty. 
\end{equation}
Thus we have shown that the second marginals of $\{Q^N\}$ are also tight, which in turn shows that
$\{\bar \mu^N, Q^N\}$ is a tight sequence of $\sP_1(\sX)\times \sP(\sZ)$-valued random variables. Together with Lemma \ref{lem:tightjn}, this finishes the proof of Lemma \ref{keytightness}(i). \hfill \qed

\subsubsection{Proof of Lemma \ref{keytightness}(ii)}
 Suppose now that $(\bar \mu^N, Q^N, \bar \sJ^N) \Rightarrow (\bar \mu, Q, \bar \sJ)$ in $\sP_1(\sX)\times\sP(\sZ)\times \mathbf{H}^{-\mathbf{s}}$, where $(\bar \mu, Q, \bar \sJ)$ is defined on some probability space. By appealing to Skorokhod's representation theorem, we can assume that $\{(\bar \mu^N, Q^N, \bar\sJ^N)\}$ and $(\bar\mu, Q, \bar\sJ)$ are defined on a common probability space $(\tilde \Omega, \tilde \sF,  \tilde P)$ and that $(\bar \mu^N, Q^N, \bar \sJ^N) \to (\bar\mu, Q,  \bar\sJ)$ a.s. Let $\tilde E$ denote expectation on this space.
The property $Q_{(1)} = \bar \mu$ is immediate from the identity $Q^N_{(1)} = \bar \mu^N$ for every $N \in \NN$. 
 We will complete the remainder of the proof in three steps: step 1 will establish that $Q \in \sP_2(\sZ)$, step 2 that $Q \in \sS(\sZ)$, and step 3 that $Q \in \sP^*(\bar \sJ)$, from which the result will follow. \\

{\it Step 1.} 
By Fatou's lemma, 
\begin{equation}\label{eq:QinP2}
\begin{aligned}
	 \tilde E\left[E_{Q}\left[ \int_{\R^m\times[0,T]}|y|^2 \rho(dy\,dt) \right] \right] &\le \liminf_{N \to \infty}  \tilde E\left[E_{Q^N}\left[ \int_{\R^m\times[0,T]}|y|^2 \rho(dy\,dt) \right] \right] \\
	&= \liminf_{N \to \infty} E\left[ \frac{1}{N}\sum_{j=1}^N \int_0^T \left|u_j^N(t)\right|^2\,dt \right] < \infty, 
\end{aligned}
\end{equation}
and hence $Q \in \sP_2(\sZ)$ a.s. \\

{\it Step 2.} We now show that a.s. $Q \in \sS(\sZ)$, namely it is  a weak solution to \eqref{eq:LimitContSDE}. 
Define the generator $\sA$ as follows. 
For each $f \in \sC_c^2(\R^d, \R)$, let 
\[
	\sA f(\nu, x, y) = \left( b(x, \nu) + \sigma(x, \nu)y \right) \cdot \nabla f(x), \qquad (\nu, x, y) \in \sP_1\left(\R^d\right)\times \R^d \times\R^m. 
\]
Now fix an $f \in \sC_c^2(\R^d, \R)$ and 
define, for each $V \in \sV$, the $\R$-valued process $\{M^{V}(t), 0\le t \le T\}$ on the measurable space $(\sZ, \sB(\sZ))$ by
\begin{equation}\label{eq:martingale}
	M^{V}(t, (\xi,r)) = f(\xi(t)) - f(\xi(0)) - \int_{[0,t] \times \R^m} \sA f\left(V(s), \xi(s), y\right)\,r(ds, dy), \qquad (\xi, r) \in \sZ. 
\end{equation}
Let $\bar V \doteq \nu_Q$. 
Since $f$ is arbitrary, to establish that $Q \in \sS(\sZ)$ a.s., it suffices to show that for each fixed $0\le t \le T$ and a.e. $\omega \in \tilde \Omega$,   
\begin{equation}
\label{eq:martisz}	
M^{\bar V(\omega)}(t, (\xi,r)) = 0, \qquad Q(\omega) \mbox{-a.e. } (\xi,r) \in \sZ.
\end{equation}
We will supress $\omega$ from the notation for the remainder of the proof.

For each $1\le B < \infty$, let $\psi_B \in \sC_c(\R^m, \R^m)$ be such that $\psi_B(y) = y$ on $\{|y| \le B\}$ and $|\psi_B(y)| \le |y| + 1$ everywhere. Note that since $B \ge 1$, this definition implies that 
\begin{equation}\label{eq:psiBproperty}
	\left|y - \psi_B(y) \right| \le \frac{|y|(2|y| + 1)}{B}1_{\{|y| > B\}} \le \frac{3|y|^2}{B}. 
\end{equation}
Also let $\eta_B \in \sC_c(\R^d, \R^d)$ be such that $\eta_B(x) = x$ on $\{|x| \le B\}$ and $|\eta_B(x)| \le |x| + 1$ everywhere.
As with $\psi_B$, we have that 
\begin{equation}\label{eq:psiBpropertyb}
	\left|x - \eta_B(x) \right| \le \frac{3|x|^2}{B}. 
\end{equation}
Now define the `truncated generator' $\sA_B$
\[
	\sA_{B} f(\nu, x,y) =  \left(\eta_B( b(x, \nu)) + \sigma(x, \nu)\psi_B(y)\right)\cdot \nabla f(x), \qquad (\nu, x,y) \in \sP_1\left(\R^d\right) \times \R^d\times \R^m, 
\]
and for each $V \in \sV$, let $\{M^{V}_{B}(t)\}$ be the corresponding process defined as in \eqref{eq:martingale} with $\sA_{B}$ in place of $\sA$. 
Let 
\[
	K \doteq  \sup_{x \in \R^d} \left(|f(x)| + |\nabla f(x)| + |D^2f(x)|\right) <\infty, 
\]
and note that for all $V \in \sV$,  $0\le s \le t$, and $(x, y) \in \R^d \times \R^m$, 
\begin{equation}\label{eq:eq644}
\begin{aligned}
	\left|\sA f(V(s),x,y)- \sA_{B} f(V(s),x,y)\right| &\le K\left( \frac{3 \left| b(x, V(s)) \right|^2}{B} + \frac{3L|y|^2}{B} \right)  \\
	&\le \frac{12K(L+1)^2}{B} \left( 1+|x|^2 + \int_{\R^d} \left|x'\right|^2\, V(s,dx') + |y|^2 \right). 
\end{aligned}
\end{equation}
Now fix $t$, and define the maps $\Phi$ and $\Phi_B$ on $\sP(\sZ)\times \sV$ by 
\[
	\Phi(\Theta, V) = E_\Theta\left[ \left| M^V(t)  \right| \right], \qquad \Phi_B(\Theta, V) = E_\Theta\left[\left| M^V_B(t)  \right| \right].
\]
Note that $\bar V^N = \nu_{Q^N}$, were $\bar V^N$ is as in Section \ref{sec:varrep}. We proceed by showing that 
\begin{enumerate}[(a)]
\item $\Phi_B$ is bounded and continuous on $\sP(\sZ)\times \sV$, 

\item $\sup_{N\ge1}  \tilde E\left[\left| \Phi(Q^N, \bar V^N) - \Phi_B(Q^N, \bar V^N)\right|\right] \to 0$ and $\left| \Phi(Q, \bar V) - \Phi_B(Q, \bar V)\right| \overset{\tilde P}{\to} 0$ as $B \to \infty$, and

\item $\Phi(Q^N, \bar V^N) \overset{\tilde P}{\to} 0$ as $N \to \infty$. 

\end{enumerate}
The convergence $(Q^N, \bar V^N) \to (Q, \bar V)$ then yields that $\Phi(Q, \bar V)=0$ a.s.,
from which the statement in \eqref{eq:martisz}  is immediate. 

We first show (a). Boundedness of $\Phi_B$ follows from the boundedness of $\eta_B$, $\psi_B$, $\sigma$, $f$, and $\nabla f$. The continuity of $\Phi_B$ follows from the continuity of the map
$(V, z) \mapsto M_B^V(t,z)$ on $\sV \times \sZ$. 

For (b), note from \eqref{eq:eq644} that 
\begin{align}
	&\tilde E \left[\left|\Phi\left(Q^N, \bar V^N \right) - \Phi_B\left(Q^N, \bar V^N\right)\right| \right] 
	\le \tilde E\left[ E_{Q^N} \left[ \left| M^{\bar V^N}(t)-M^{\bar V^N}_{B}(t)\right| \right] \right] \nonumber\\
	 &\le \frac{12K(L+1)^2}{B}\tilde E\left[E_{Q^N} \left[ \int_0^T \left(1+|X(s)|^2 + \int_{\R^d} |x|^2 \, \bar V^N(s,dx) + \int_{\R^m} |y|^2\, \rho_s(dy)\right)\, ds \right] \right]\nonumber\\
	&\le \frac{12K(L+1)^2}{B} \sup_{N\ge 1}  E\left[T +\frac{2T}{N}\sum_{j=1}^N
	\left\|\bar X_j^N\right\|_\infty^2 + \frac{1}{N}\sum_{j=1}^N\int_0^T \left|u_j^N(s)\right|^2\,ds  \right].
	\label{eq:eq237r}
\end{align}
From Lemma \ref{L2bound} and the assumption on the controls in Lemma \ref{keytightness}, we see that the last term in the above display converges to $0$ as $B \to \infty$. 
Similarly, since $Q \in \sP_2(\sZ)$ a.s.,  the estimate
\begin{align*}
&\left| \Phi(Q, \bar V) - \Phi_B(Q, \bar V)\right| 
\le E_Q \left[ \left| M^{\bar V}(t)-M^{\bar V}_{B}(t)\right| \right]\\
&\le \frac{12K(L+1)^2}{B} \left(\int_0^T  \left(1+ 2 \int_{\R^d} |x|^2 \, \bar V(s, dx)\right) ds + E_{Q}\left[\int_{[0,T]\times \R^m}|y|^2 \, \rho(ds, dy) \right]\right)
\end{align*}
implies that 
\begin{equation}
	\label{eq:244}
	\left| \Phi\left(Q, \bar V\right) - \Phi_B\left(Q, \bar V\right)\right|  \to 0 \qquad \mbox{a.s.,} \qquad \mbox{as}\; B\to \infty.
\end{equation}
This completes the proof of (b).

We now turn to (c). Note that 
\begin{align*}
	\Phi\left(Q^N, \bar V^N\right) &= E_{Q^N}\left[\left| M^{\bar V^N}(t)  \right| \right]
	=\frac{1}{N}\sum_{j=1}^N  \left|M^{\bar V^N}\left( t, \left( \bar X_j^N, \rho_j^N\right)\right)  \right| \\
	&= \frac{1}{N}\sum_{j=1}^N \left|f\left(\bar X_j^N(t)\right) - f\left(x_j^N\right) - \int_0^t  \sA f \left(\bar V^N(s), \bar X_j^N(s), u_j^N(s)\right)\,ds\right|. 
\end{align*}
By It\^o's lemma, for each $1\le j \le N$,
\begin{align*}
	f\left( \bar X_j^N(t)\right) - f\left(x_j^N\right) &= \int_{0}^{t} \sA f\left(\bar V^N, \bar X_j^N(s), u_j^N(s) \right)\,ds \\
	&\qquad + \eps_N \int_{0}^{t} \nabla f\left(\bar X_j^N(s) \right) \cdot \sigma\left(\bar X_j^N(s), \bar V^N(s) \right)\,dW_j(s) \\
	&\qquad + \frac{\eps_N^2}{2}  \int_{0}^{t} \mbox{Tr}\left[D^2 f\left(\bar X_j^N(s)\right) \left(\sigma \sigma^{\mathsf{T}}\right)\left(\bar X_j^N(s), \bar V^N(s)\right) \right]\,ds.
\end{align*}
Hence, 
\begin{align*}
	\Phi\left( Q^N, \bar V^N \right) &= \frac{1}{N}\sum_{j=1}^N \left|
	\eps_N \int_{0}^{t} \nabla f\left(\bar X_j^N(s) \right) \cdot \sigma\left(\bar X_j^N(s), \bar V^N(s) \right)\,dW_j(s)  \right. \\
	&\qquad\qquad\qquad + \left. \frac{\eps_N^2}{2}  \int_{0}^{t} \mbox{Tr}\left[D^2 f\left(\bar X_j^N(s)\right) \left(\sigma \sigma^{\mathsf{T}}\right)\left(\bar X_j^N(s), \bar V^N(s)\right) \right]\,ds \right|.
\end{align*}
From the boundedness of $\nabla f$, $D^2f$, and $\sigma$, it follows that
\[
	\tilde E \left[\Phi\left(Q^N, \bar V^N \right)\right] \le  \left(KLT^{1/2}\right)\eps_N + \frac{KL^2T\eps_N^2}{2} \to 0 \qquad \mbox{as}\; N \to \infty. 
\]
This completes (c), which as noted previously proves the statement in \eqref{eq:martisz} and which in turn shows that $Q$ is a.s. a weak solution to \eqref{eq:LimitContSDE}. \\

{\it Step 3.} To complete the proof of Lemma \ref{keytightness}, it only remains to establish that 
\begin{equation}\label{eq:eq242r}
	G_\varphi(Q) = \il \bar\sJ,\varphi\ir \qquad \mbox{for all}\qquad  \varphi \in \sC_c^\infty(U \times\R^d, \R^d), \qquad \tilde P\mbox{-a.s.}
\end{equation}
By considering a countable, dense subset of $\sC_c^\infty(U \times \R^d, \R^d)$, it suffices to show that for each fixed $\varphi \in \sC_c^\infty(U \times \R^d, \R^d)$, we have $G_\varphi(Q) = \bar\sJ(\varphi)$ a.s.

Fix $\varphi$, and let 
\[
	K_\varphi \doteq \sup_{(t,x)\in [0,T] \times \R^d} \left(\left| \varphi(t,x) \right| + \sum_{k,l=1}^d \left| \frac{\partial\varphi_k}{\partial x_l}(t,x) \right| \right) < \infty. 
\]
Then, a.s., 
\begin{align*}
	\left\il\bar \sJ^N, \varphi \right\ir &= \frac{1}{N}\sum_{j=1}^N \int_0^T \varphi\left(t, \bar X_j^N(t)\right)\circ d\bar X_j^N(t) \\
	&= \frac{1}{N}\sum_{j=1}^N \int_0^T \varphi\left(t, \bar X_j^N(t)\right) \cdot d\bar X_j^N(t) + \frac{1}{2N} \sum_{j=1}^N \left\il \varphi\left(\cdot, \bar X_j^N(\cdot)\right), \bar X_j^N(\cdot) \right\ir_T \\
	&= \frac{1}{N}\sum_{j=1}^N \int_0^T \varphi\left(t, \bar X_j^N(t)\right)\cdot d\bar X_j^N(t) \\
	&\qquad + \frac{\eps_N^2}{2N} \sum_{j=1}^N \int_0^T \sum_{k,l=1}^d \frac{\partial \varphi_k}{\partial x_l}\left(t, \bar X_j^N(t)\right) (\sigma\sigma^{\mathsf{T}})_{lk}\left(\bar X_j^N(t), \bar V^N(t)\right)\,dt.
\end{align*}
Define
\[
	G_\varphi^*\left(Q^N\right)  \doteq \frac{1}{N}\sum_{j=1}^N \int_0^T \varphi\left(t, \bar X_j^N(t) \right) \cdot d\bar X_j^N(t). 
\]
Since $|\sigma| \le L$,
\begin{align*}
	 \left| \left\il\bar \sJ^N,\varphi\right\ir - G_\varphi^*\left(Q^N\right) \right| &= \left| \frac{\eps_N^2}{2N} \sum_{j=1}^N \int_0^T \sum_{k,l=1}^d \frac{\partial \varphi_k}{\partial x_l}\left(t, \bar X_j^N(t)\right) (\sigma\sigma^{\mathsf{T}})_{lk}\left(\bar X_j^N(t), \bar V^N(t)\right)\,dt \right| \le \frac{K_\varphi L^2 T \eps_N^2}{2}, 
\end{align*}
and hence $| \il\bar \sJ^N, \varphi \ir - G_\varphi^*(Q^N) | \to 0$ in $L^1$ as $N \to \infty$. 
Also,  by the dominated convergence theorem,  
\[
	\lim_{N \to \infty} E\left[ \left|\left\il\bar \sJ, \varphi \right\ir- \left\il\bar\sJ^N,\varphi \right\ir \right|\wedge 1 \right] = 0. 
\]
Next, writing
\[
	\left| \left\il\bar\sJ, \varphi \right\ir - G_\varphi(Q) \right| \wedge 1 \le \left| \left\il\bar\sJ, \varphi \right\ir - \left\il\bar\sJ^N, \varphi \right\ir \right|\wedge 1 + \left| \left\il\bar \sJ^N, \varphi \right\ir - G_\varphi^*\left(Q^N\right) \right| + \left| G_\varphi^*\left(Q^N\right) - G_\varphi(Q) \right|, 
\]
we see that to prove \eqref{eq:eq242r} and thus to complete the proof it suffices to argue that the third term on the right side of the above display converges to $0$ in probability.

To this end, define the maps $\tilde G_\varphi$ and $\tilde G^B_\varphi$ on $\{\Theta \in \sP_2(\sZ) : \nu_\Theta \in \sV\} \times \sV$ by
\begin{align*}
	\tilde G_\varphi(\Theta, V) &\doteq E_\Theta\left[ \int_0^T \varphi(t, X(t))\cdot b(X(t), V(t))\,dt + \int_{[0,T]\times\R^m} \varphi(t, X(t))\cdot \sigma(X(t), V(t))y\,\rho(dt, dy)\right], \\
	\tilde G^B_\varphi(\Theta, V) &\doteq E_\Theta\left[ \int_0^T \varphi(t, X(t))\cdot \eta_B\left(b(X(t), V(t))\right)\,dt \right] \\
	&\qquad \qquad + E_\Theta\left[
	\int_{[0,T]\times\R^m} \varphi(t, X(t))\cdot \sigma(X(t), V(t))\psi_B(y)\,\rho(dt, dy)\right], 
\end{align*}
for each $1\le B < \infty$.
Note by \eqref{eq:eq457} that $\tilde G_\varphi(\Theta, \nu_\Theta) = G_\varphi(\Theta)$ whenever $\Theta \in \sS(\sZ)$, and hence since $\bar V = \nu_Q$ and $Q \in \sS(\sZ)$ a.s., we have that $\tilde G_\varphi(Q, \bar V) = G_\varphi(Q)$ a.s. 
Also, since
\begin{align*}
	\tilde G_{\varphi}\left(Q^N, \bar V^N\right) &=
	\frac{1}{N}\sum_{j=1}^N  \int_0^T  \varphi\left(t, \bar X_j^N(t) \right)\cdot b\left(\bar X_j^N(t), \bar V^N(t)\right)\,dt \\
	&\qquad + \frac{1}{N}\sum_{j=1}^N\int_0^T \varphi\left(t, \bar X_j^N(t) \right)\cdot \sigma\left(\bar X_j^N(t), \bar V^N(t) \right) u_j^N(t)\,dt
\end{align*}
and $\eps_N \to 0$, we see that $|\tilde G_{\varphi}(Q^N, \bar V^N)-G_{\varphi}^*(Q^N)| \overset{\tilde P}{\to} 0$
as $N\to \infty$. Thus it remains to argue that 
\begin{equation}\label{eq:finalconv}
	\left| \tilde G_\varphi\left(Q^N, \bar V^N\right) - \tilde G_\varphi\left(Q, \bar V\right) \right| \overset{\tilde P}{\to} 0 \qquad \mbox{as}\; N \to\infty. 
\end{equation}
Now, since 
\begin{align*}
	\tilde G_{\varphi}^B\left(Q^N, \bar V^N\right) &=  \frac{1}{N}\sum_{j=1}^N  \int_0^T  \varphi\left(t, \bar X_j^N(t) \right)\cdot \eta_B\left(b\left(\bar X_j^N(t), \bar V^N(t)\right)\right)\,dt \\
		&\qquad +\frac{1}{N}\sum_{j=1}^N \int_0^T \varphi\left(t, \bar X_j^N(t) \right)\cdot \sigma\left(\bar X_j^N(t), \bar V^N(t) \right) \psi_B\left(u_j^N(t)\right)\,dt, 
\end{align*}
and the map
$$(\xi, r, V) \mapsto 
\int_0^T \varphi\left(t, \xi(t) \right) \cdot \eta_B(b(\xi(t),  V(t)))\,dt  +
\int_{[0,T]\times\R^m}\varphi\left(t, \xi(t) \right) \cdot \sigma(\xi(t),  V(t)) \psi_B(y)\, r (dt, dy)$$
is bounded and continuous on $\sZ \times \sV$, the a.s. convergence 
$(Q^N, \bar V^N) \to (Q, \bar V)$ in $\sP(\sZ)\times \sV$ implies that 
\begin{equation}\label{eq:eq249}
	\tilde G_{\varphi}^B\left(Q^N, \bar V^N\right) \to \tilde G_{\varphi}^B\left(Q, \bar V\right) \qquad  \mbox{a.s.,}\qquad  \mbox{as}\; N\to \infty, 
\end{equation}
for each $B$. 
Also, 
using \eqref{eq:psiBproperty} and \eqref{eq:psiBpropertyb}, as in the proof of \eqref{eq:eq237r}, we see
\begin{align*}
&\left|\tilde G_{\varphi}^B\left(Q^N, \bar V^N\right)-\tilde G_{\varphi}\left(Q^N, \bar V^N\right)\right | \\
&\le \frac{18K_{\varphi}L^2T}{B}\left( 1 + \frac{1}{N}\sum_{j=1}^N  \left\|\bar X_j^N\right\|_\infty^2\right) + \frac{3K_{\varphi}L}{BN}\sum_{j=1}^N \int_0^T \left|u_j^N(t)\right|^2 dt, 
\end{align*}
which in view of Lemma \ref{L2bound} and the assumption on the controls in Lemma \ref{keytightness} 
shows that 
\begin{equation}\label{eq:eq249b}
	\sup_{N\ge1} \tilde E\left[\left|\tilde G_{\varphi}^B\left(Q^N, \bar V^N\right)-\tilde G_{\varphi}\left(Q^N, \bar V^N\right)\right | \right] \to 0 \qquad \mbox{as}\; B\to \infty.
\end{equation}
Finally, along the same lines as in the proof of \eqref{eq:244}, 
\begin{align*}
\left|\tilde G_{\varphi}^B\left(Q, \bar V\right)-\tilde G_{\varphi}\left(Q, \bar V\right)\right|	\to 0 \qquad  \mbox{a.s.,} \qquad \mbox{as}\; B\to \infty.
\end{align*}
Combining the above convergence with \eqref{eq:eq249} and \eqref{eq:eq249b}
shows \eqref{eq:finalconv}, 
which as noted previously establishes that $Q \in \sP^*(\bar\sJ)$ a.s.  
and thus completes the proof of the lemma. \hfill \qed

\subsection{Proof of Lemma \ref{lem:lem908}}
\label{sec:pflem908} 

We first prove an estimate similar to that in Lemma \ref{L2bound} for the coordinate process $X(t)$ on the space $(\sZ, \sB(\sZ), \Theta)$ for each $\Theta \in \sP_2(\sZ)\cap \sS(\sZ)$. By the definition of $\sS(\sZ)$, the coordinate maps $(X, \rho)$ satisfy 
\begin{equation}\label{eq:XSDE}
	dX(t) = b\left(X(t), \nu_\Theta(t)\right)\,dt + \int_{\R^m} \sigma\left( X(t), \nu_\Theta(t)\right)y\,\rho_t(dy)\,dt \qquad \Theta\mbox{-a.s.},
\end{equation}
with $X(0) \sim \mu_0$. 
By Condition \ref{maincondition}, 
\begin{equation}\label{eq:bThetagrowth}
\begin{aligned}
	\left|b\left(X(t), \nu_\Theta(t)\right)\right|^2 &\le 3L^2\left( 1 + |X(t)|^2 + \int_{\R^d} |x|^2\,\nu_\Theta(t, dx) \right) \\
	&=  3L^2\left( 1 + |X(t)|^2 + E_\Theta\left[ |X(t)|^2 \right] \right). 
\end{aligned}
\end{equation}
Applying the above bound in \eqref{eq:XSDE}, taking expectation,  using $|\sigma| \le L$, and applying Gronwall's inequality, we have
\begin{equation}\label{eq:XL2bound}
	E_\Theta\left[ \|X\|_\infty^2\right]\le \tilde c\left( 1 + \int_{\R^d}|x|^2\,\mu_0(dx) + E_\Theta\left[\int_{[0,T] \times \R^m} |y|^2\,\rho(dt, dy)\right]\right) < \infty, 
\end{equation}
for some $\tilde c = \tilde c(L,T) < \infty$.

Now fix $\mathbf{s} \in \sO_d$ and let $\{(\mu_k, \Theta_k, \sJ_k)\}$ be a sequence in $\sP_1(\sX) \times \sP(\sZ) \times \mathbf{H}^{-\mathbf{s}}$ that satisfies the hypotheses of the lemma. 
Note that, by \eqref{eq:XL2bound}, 
\begin{equation}\label{eq:supThetakL2}
\begin{aligned}
	\sup_{k\ge1}\int_\sX \|\psi\|^2_\infty \,\mu_k(d\psi) &= \sup_{k\ge1}\int_\sX \|\psi\|^2_\infty \,(\Theta_k)_{(1)}(d\psi) = \sup_{k\ge1} E_{\Theta_k}\left[\|X\|_\infty^2\right] \\
	&\le \tilde c \left( 1 + \int_{\R^d}|x|^2\,\mu_0(dx) + \sup_{k\ge1}E_{\Theta_k}\left[\int_{[0,T]\times\R^m} |y|^2\,\rho(dt, dy)\right]\right) < \infty. 
\end{aligned}
\end{equation}
If $\tau$ is a $\{\sigma(X(s), s \le t)\}$-stopping time on $(\sZ, \sB(\sZ))$ taking values in $[0, T-\eps]$, then for any $\eps > 0$, 
\begin{align*}
	\left|X(\tau+\eps) - X(\tau)\right|^2 &\le 2\left| \int_{\tau}^{\tau+\eps} b(X(t), \nu_{\Theta_k}(t))\,dt \right|^2 + 2\left|\int_{\tau}^{\tau+\eps} \int_{\R^m} \sigma(X(t), \nu_{\Theta_k}(t))y\,\rho_t(dy)\,dt \right|^2 \\
	&\le 6L^2 \eps \int_0^T \left( 1 + |X(t)|^2 + E_{\Theta_k}\left[|X(t)|^2 \right]\right)\,dt + 2L^2\eps \int_0^T \int_{\R^m} |y|^2\,\rho_t(dy)\,dt , 
\end{align*}
$\Theta_k$-a.s. for each $k$. Hence, using the bound in \eqref{eq:XL2bound}, 
\[
	E_{\Theta_k} \left[ \left|X(\tau+\eps) - X(\tau)\right|^2 \right] \le 12 L^2(1+\tilde c)\eps \left( 1 + \int_{\R^d}|x|^2\,\mu_0(dx) + \sup_{k\ge1}E_{\Theta_k}\left[\int_{[0,T]\times \R^m} |y|^2\,\rho(dt, dy)\right]\right). 
\]
If $\sT_\eps$ denotes the collection of all such stopping times $\tau$, it follows that 
\begin{align*}
	 \sup_{k\ge1} \sup_{\tau \in \sT_\eps} \int_{\sX} \left|\psi(\tau+\eps) - \psi(\tau)\right|^2\,\mu_k(d\psi) &=  \sup_{k\ge1} \sup_{\tau \in \sT_\eps} \int_{\sX} \left|\psi(\tau+\eps) - \psi(\tau)\right|^2\,(\Theta_k)_{(1)}(d\psi) \\
	&= \sup_{k\ge 1} \sup_{\tau \in \sT_\eps} E_{\Theta_k} \left[ \left|X(\tau+\eps) - X(\tau)\right|^2 \right] \\
	&\to 0 
\end{align*}
as $\eps \to 0$. This and \eqref{eq:supThetakL2} prove relative compactness of $\{\mu_k\}$ (and hence of $\{(\Theta_k)_{(1)}\}$) in $\sP(\sX)$. By Lemma \ref{P1tightness} and \eqref{eq:supThetakL2}, we  in fact get relative compactness of  $\{\mu_k\}$  in $\sP_1(\sX)$ . 

For the second marginals $\{(\Theta_k)_{(2)}\}$, we recall from the proof of Lemma \ref{keytightness} that 
\[
	H(\theta) = \int_{\sR_1} \int_{[0,T]\times\R^m} |y|^2\,r(dt, dy)\,\theta(dr)
\]
has relatively compact level sets on $\sP(\sR_1)$. Hence, we have relative compactness of $\{(\Theta_k)_{(2)}\}$ in $\sP(\sR_1)$ on observing that
\[
	\sup_{k\ge1} H\left((\Theta_k)_{(2)}\right) = \sup_{k\ge1} E_{\Theta_k} \left[ \int_{[0,T]\times\R^m} |y|^2\rho(dt, dy) \right] < \infty. 
\]
This establishes that $\{\Theta_k\}$ is relatively compact in $\sP(\sZ)$.

For $\{\sJ_k\}$, we employ the following lemma, the proof of which is saved for the Appendix. 
\begin{lemma}\label{Gpathwisereal} 
	Suppose Conditions \ref{maincondition} and \ref{initialcondition} are satisfied.
	Also suppose that for some $\mathbf{s} \in \sO_d$ and $(\mu, \sJ) \in \sP_1(\sX) \times \mathbf{H}^{-\mathbf{s}}$,
	$I(\mu, \sJ)<\infty$. Then, for each $\mathbf{s}' \in \sO_d$, there is a constant $C_{\mathbf{s}'} < \infty$ such that for any $\Theta \in \sP^*(\sJ)$ with
	$\Theta_{(1)}=\mu$, and for all $\varphi \in \sC_c^\infty(U\times\R^d, \R^d)$,  
	\[
		\left| \il \sJ, \varphi \ir \right|^2 \le E_{\Theta} \left[ \left| \int_0^T \varphi(t, X(t))\cdot dX(t) \right|^2\right] \le C_{\mathbf{s}'} \left( 1 + E_\Theta\left[ \int_{[0,T]\times \R^m} |y|^2 \,\rho(dt, dy) \right] \right) \|\varphi\|^2_{\mathbf{s}'}, 
	\]
	where $C_{\mathbf{s}'}$ does not depend on $\sJ$, $\varphi$, or $\Theta$. 
	In particular, 
	$\sJ \in \mathbf{H}^{-\mathbf{s}'}$ for all
	$\mathbf{s}' \in \sO_d$.
\end{lemma}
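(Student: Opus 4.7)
The key observation is that, because $\Theta \in \sP^*(\sJ) \subset \sS(\sZ) \cap \sP_2(\sZ)$, the coordinate process $X$ has absolutely continuous paths under $\Theta$ satisfying
\[
\dot X(t) = b(X(t), \nu_\Theta(t)) + \sigma(X(t), \nu_\Theta(t)) v(t), \qquad v(t) \doteq \int_{\R^m} y\,\rho_t(dy),
\]
so that $\int_0^T \varphi(t,X(t))\cdot dX(t) = \int_0^T \varphi(t,X(t))\cdot \dot X(t)\,dt$ pointwise. The leftmost inequality then reduces to Jensen applied to $\langle \sJ,\varphi\rangle = G_\varphi(\Theta) = E_\Theta\bigl[\int_0^T \varphi(t,X(t))\cdot dX(t)\bigr]$, so the whole content lies in the rightmost inequality.

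For that inequality, the plan is to apply Cauchy--Schwarz pathwise to split $\varphi$ and $\dot X$:
\[
\Bigl|\int_0^T \varphi(t,X(t))\cdot \dot X(t)\,dt\Bigr|^2 \le \int_0^T |\varphi(t,X(t))|^2\,dt \cdot \int_0^T |\dot X(t)|^2\,dt,
\]
then bound the first factor by $T\|\varphi\|_\infty^2$ via a Sobolev embedding, and take expectation on the second factor. The embedding I have in mind is the Bochner/fractional one: since $\mathbf{s}' = (s_1',s_2')$ satisfies $s_1' > 1/2$ and $s_2' > d/2 + 1$, one has $H^{s_2'}(\R^d,\R^d) \hookrightarrow C^1_b(\R^d,\R^d)$ and $H^{s_1'}(U,X) \hookrightarrow C(\bar U,X)$ for any Hilbert space $X$, so
$\mathbf{H}^{\mathbf{s}'} \hookrightarrow C(\bar U, C_b^1(\R^d,\R^d)) \hookrightarrow L^\infty(U\times\R^d,\R^d)$,
giving $\|\varphi\|_\infty \le C_1 \|\varphi\|_{\mathbf{s}'}$ with $C_1 = C_1(\mathbf{s}')$. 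For the second factor, Condition \ref{maincondition} (through the linear-growth bound \eqref{eq:bThetagrowth} and $|\sigma|\le L$), together with Jensen's inequality $|v(t)|^2 \le \int |y|^2 \rho_t(dy)$, and the moment bound \eqref{eq:XL2bound} (which gives $E_\Theta[\|X\|_\infty^2]$ in terms of $\int|x|^2\mu_0(dx)$ and $E_\Theta[\int|y|^2\rho]$), yields
\[
E_\Theta\Bigl[\int_0^T |\dot X(t)|^2\,dt\Bigr] \le C_2\Bigl(1 + E_\Theta\Bigl[\int_{[0,T]\times\R^m}|y|^2\,\rho(dt,dy)\Bigr]\Bigr),
\]
for a constant $C_2$ depending only on $L$, $T$, and $\int|x|^2\mu_0(dx)$ (the latter finite by Condition \ref{initialcondition}) — importantly, not on $\Theta$, $\sJ$, or $\varphi$. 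Combining the two factors gives the desired estimate with $C_{\mathbf{s}'} \doteq T C_1^2 C_2$.

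For the ``in particular'' statement, once $I(\mu,\sJ) < \infty$ there exists $\Theta \in \sP^*(\sJ)$ with $\Theta_{(1)}=\mu$ and $E_\Theta[\int|y|^2\rho] < \infty$, so the chain of inequalities gives a uniform bound $|\langle\sJ,\varphi\rangle| \le K\|\varphi\|_{\mathbf{s}'}$ on $\sC_c^\infty(U\times\R^d,\R^d)$ for every $\mathbf{s}' \in \sO_d$; density of $\sC_c^\infty(U\times\R^d,\R^d)$ in $\mathbf{H}^{\mathbf{s}'}$ then extends $\sJ$ uniquely to an element of $\mathbf{H}^{-\mathbf{s}'}$. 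The only nonroutine step is the Bochner-valued Sobolev embedding on the Gagliardo space, which I would justify by applying the scalar embedding $H^{s_1'}(\R) \hookrightarrow C^{s_1'-1/2}(\R)$ to the $H^{s_2'}(\R^d,\R^d)$-valued map $t\mapsto \varphi(t,\cdot)$ (first extended off $U$ by a standard extension operator, then restricted back) and composing with the spatial embedding; this is standard but is the one place where one must be slightly careful with the definitions.
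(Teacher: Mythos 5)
Your proposal is correct, but it takes a genuinely different route from the paper. The paper expands $\varphi$ in the Fourier basis $e^k_{n,\xi}$, applies Cauchy--Schwarz against the weights $(1+n^2)^{s_1'}(1+|\xi|^2)^{s_2'}$ defining the equivalent norm $\|\cdot\|_{*,\mathbf{s}'}$, and then bounds $E_\Theta[|Z_k(n,\xi)|^2]$ uniformly in $(n,\xi,k)$ using $|e^k_{n,\xi}|\le T^{-1}$, the growth bound \eqref{eq:bThetagrowth}, Jensen's inequality on $\rho_t$, and \eqref{eq:XL2bound}; the constant $C_{\mathbf{s}'}$ emerges as $\sum_n\int(1+n^2)^{-s_1'}(1+|\xi|^2)^{-s_2'}\,d\xi$, finite precisely because $\mathbf{s}'\in\sO_d$. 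You instead exploit the fact that under $\Theta\in\sS(\sZ)$ the paths of $X$ are absolutely continuous (no martingale term), apply Cauchy--Schwarz pathwise in $t$, and control $\sup|\varphi|$ by $\|\varphi\|_{\mathbf{s}'}$ via the vector-valued Morrey/Sobolev embedding $\mathbf{H}^{\mathbf{s}'}\hookrightarrow C(\bar U, C^1_b)\hookrightarrow L^\infty$; the remaining moment bound on $\int_0^T|\dot X|^2\,dt$ is handled exactly as in the paper. Both arguments are sound, and both ultimately rest on the same condition $s_1'>1/2$, $s_2'>d/2+1$ (which makes the paper's weight series summable and your embedding valid). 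What the paper's method buys is uniformity with Lemma \ref{JbarReal}, where the controlled processes carry a genuine stochastic integral and a pathwise Cauchy--Schwarz in time is unavailable, so the Fourier machinery is needed there anyway; what your method buys is elementarity here, at the cost of having to justify the Bochner-valued fractional embedding, which the paper never states explicitly but which is standard (the Gagliardo seminorm argument for $H^{s_1'}(U;X)\hookrightarrow C^{0,s_1'-1/2}(\bar U;X)$ goes through verbatim for Hilbert-space-valued maps). Your treatment of the leftmost inequality (Jensen) and of the ``in particular'' claim (density of $\sC_c^\infty(U\times\R^d,\R^d)$ in $\mathbf{H}^{\mathbf{s}'}$) matches the paper's.
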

Recall the collection of test functions $\{g_M, M < \infty\}$ from Definition \ref{mollifiers}, which by Lemma \ref{gMnormbound} (see \eqref{eq:eq222r}) satisfy 
\begin{equation}\label{eq:Kbound}
	\|g_M\varphi\|_{\mathbf{s}} 
	\le K\|\varphi\|_{\mathbf{s}}, 
\end{equation}
 for all $\varphi \in \sC_c^\infty(U\times\R^d, \R^d)$
and $\mathbf{s}\in \sO_d$, with $K < \infty$ depending only on  $\mathbf{s}$. For each $k\ge1$ and $M < \infty$, define $\sJ_k^M, \sJ_k^{M,c} \in \mathbf{H}^{-\mathbf{s}}$ by 
\[
	\left\il \sJ_k^M, \varphi \right\ir \doteq \left\il \sJ_k, g_M\varphi \right\ir, \quad \left\il \sJ_k^{M,c}, \varphi \right\ir \doteq \left\il \sJ_k, \varphi \right\ir - \left\il \sJ_k^M, \varphi \right\ir, \quad \varphi \in \sC_c^\infty(U\times\R^d, \R^d). 
\]
Fix some $\mathbf{s}' \in \sO_d$ such that $s_1' < s_1$ and $s_2' < s_2$. 
Since $\Theta_k \in \sP^*(\sJ_k)$ for each $k$ and \eqref{eq:eq100} holds, $I(\mu_k, \sJ_k) < \infty$ for each $k$, so by Lemma \ref{Gpathwisereal}, $\sJ_k \in \mathbf{H}^{-\mathbf{s}'}$ for each $k$.
Then for each $k$ and $M$, in view of \eqref{eq:Kbound}, $\sJ_k^M$ and $\sJ_k^{M,c}$ are in $\mathbf{H}^{-\mathbf{s}'}$ as well, and furthermore, 
\begin{align*}
	\left| \left\il \sJ_k^M, \varphi \right\ir\right|^2 &\le C_{\mathbf{s}'} \left( 1 + E_\Theta\left[ \int_{[0,T]\times\R^m} |y|^2 \,\rho(dt, dy) \right] \right) \|g_M\varphi\|^2_{\mathbf{s}'} \\
	&\le C_{\mathbf{s}'} K^2 \left( 1 + E_{\Theta_k}\left[ \int_{[0,T]\times\R^m} |y|^2 \,\rho(dt, dy) \right] \right) \|\varphi\|^2_{\mathbf{s}'}, 
\end{align*}
and hence 
\begin{equation}\label{eq:supJM}
	\sup_{M<\infty, k\ge1} \left\|\sJ_k^M \right\|^2_{-\mathbf{s}'} \le C_{\mathbf{s}'} K^2 \left( 1 + \sup_{k\ge1}E_{\Theta_k}\left[ \int_{[0,T]\times \R^m} |y|^2 \,\rho(dt, dy) \right] \right) < \infty. 
\end{equation}
Noting that for each $M$, $\{\sJ_k^M\}$ are all supported on $[0,T] \times \{|x| \le M+1\} \subset U\times \R^d$,  by Lemma \ref{compactembedding}, $\{\sJ_k^M, k \ge 1\}$ is relatively compact in $\mathbf{H}^{-\mathbf{s}}$.  
Now define the collection of stopping times $\{\tau^M, M < \infty\}$ on $(\sZ, \sB(\sZ))$ by $\tau^M \doteq \inf\{t > 0 : |X(t)| \ge M\}$. Note that 
\begin{align*}
	\left\il \sJ_k^{M,c}, \varphi \right\ir &= E_{\Theta_k} \left[ \int_0^T (1-g_M(X(t)))\varphi(t, X(t))\cdot dX(t) \right] \\
	&= E_{\Theta_k} \left[1_{\left\{ \tau^M < T \right\}} \int_0^T (1-g_M(X(t)))\varphi(t, X(t))\cdot dX(t) \right], 
\end{align*}
and so by Lemma \ref{Gpathwisereal} and \eqref{eq:Kbound}, 
\begin{align*}
	\left| \left\il \sJ_k^{M,c}, \varphi \right\ir \right|^2 &\le \Theta_k\left(\tau^M < T \right) E_{\Theta_k}\left[ \left| \int_0^T (1-g_M(X(t)))\varphi(t, X(t)) \cdot dX(t) \right|^2 \right] \\
	&\le \Theta_k\left(\tau^M < T \right)  C_{\mathbf{s}} \left( 1 + E_{\Theta_k}\left[ \int_{[0,T]\times \R^m} |y|^2 \,\rho(dt, dy) \right] \right) \|(1-g_M)\varphi\|^2_{\mathbf{s}} \\
	&\le 2\Theta_k\left(\tau^M < T \right)  C_{\mathbf{s}} \left(1+K^2\right) \left( 1 + E_{\Theta_k}\left[ \int_{[0,T]\times\R^m} |y|^2 \,\rho(dt, dy) \right] \right) \|\varphi\|^2_{\mathbf{s}}, 
\end{align*}
and hence 
\begin{equation}\label{eq:eq307r}
\begin{aligned}
	\sup_{k\ge1} \left\| \sJ_k^{M,c} \right\|^2_{-\mathbf{s}} &\le 2\sup_{k\ge1}\Theta_k\left(\tau^M < T \right)  C_{\mathbf{s}} \left(1+K^2\right) \left( 1 + \sup_{k\ge1} E_{\Theta_k}\left[ \int_{[0,T]\times\R^m} |y|^2 \,\rho(dt, dy) \right] \right) \\
	&\le \frac{2}{M^2}\sup_{k\ge 1} E_{\Theta_k}\left[ \|X\|_\infty^2\right] C_{\mathbf{s}} \left(1+K^2\right) \left( 1 + \sup_{k\ge1} E_{\Theta_k}\left[ \int_{[0,T]\times\R^m} |y|^2 \,\rho(dt, dy) \right] \right) \\
	&\to 0
\end{aligned}
\end{equation}
as $M \to \infty$, by \eqref{eq:XL2bound}. Then by Lemma \ref{tightnessdecomp} (applied to the constant random variables $\sJ_k = \sJ_k^M + \sJ_k^{M,c}$ on $(\sZ, \sB(\sZ))$), we obtain from \eqref{eq:supJM} and \eqref{eq:eq307r} that $\{\sJ_k\}$ is relatively compact in $\mathbf{H}^{-\mathbf{s}}$. 
Lemma \ref{lem:lem908} now follows on combining the above with the relative compactness of
$\{(\mu_k, \Theta_k)\}$ in $\sP_1(\sX)\times \sP(\sZ)$ shown previously.
\hfill \qed

\subsection{Proof of Lemma \ref{lem:weakuniq}}\label{sectionweakuniq}
Recall that we assume that Conditions  \ref{maincondition}, \ref{initialcondition}  and \ref{sigmacondition} hold. In particular, $\sigma(x, \mu) = \sigma(\mu)$.
Let $\Theta_1, \Theta_2 \in \sS(\sZ)\cap\sP_2(\sZ)$ be such that $\Theta_1\circ \theta^{-1} = \Theta_2 \circ \theta^{-1}$, and let $\Lambda = \Theta_1 \circ \theta^{-1}$. Then for  $j =1, 2$, we can disintegrate $\Theta_j$ as
\[
	\Theta_j(dx, dr) = \widetilde \Theta_j(x_0, r, dx) \, \Lambda(dx_0, dr)
\]
for some measurable map  $\widetilde \Theta_j : \R^d\times \sR_1\to \sP(\sX)$. Define the probability measure $\Xi$ on the space $\R^d\times \sR_1\times \sX \times \sX$ as 
\[
	\Xi(dx_0, dr, dx_1, dx_2) = \widetilde \Theta_1(x_0, r, dx_1)\, \widetilde \Theta_2(x_0, r, dx_2)\, \Lambda(dx_0, dr), 
\]
and let $(\xi_0, \rho, X_1, X_2)$ denote the coordinate maps on this space. Then, $X_1(0) = X_2(0) = \xi_0$, and to prove the lemma it suffices to show that $X_1= X_2$ $\Xi$-a.s.  

Letting $u(t) = \int_{\R^m} y\, \rho_t(dy)$ and $V_j(t) = \Xi \circ (X_j(t))^{-1}$, we have that $E_{\Xi}\left[\int_0^T |u(t)|^2\,dt\right] < \infty$ and 
\[
	X_j(t) = \xi_0 +  \int_0^t b\left(X_j(s), V_j(s)\right)\,ds + \int_0^t \sigma\left(V_j(s)\right) u(s)\,ds, \qquad j = 1,2.
\]
By the Lipschitz property of the coefficients and the fact that 
\[
	d_{1}\left(V_1(s), V_2(s)\right)^2 \le \left(E_{\Xi}\left[\left|X_1(s) - X_2(s)\right| \right] \right)^2 \le E_\Xi \left[ \sup_{0\le r\le s} \left|X_1(r) - X_2(r)\right|^2\right], 
\]
it follows from Condition \ref{maincondition} that for every $0 \le t \le T$, 
\begin{align*}
	\left|X_1(t) - X_2(t)\right|^2 &\le 2T\int_0^t \left| b\left( X_1(s), V_1(s)\right) -  b\left( X_2(s), V_2(s)\right) \right|^2\,ds   \\
	&\qquad + 2\left(\int_0^T \left|u(s)\right|^2\,ds\right) \int_0^T \left| \sigma\left(V_1(s) \right) - \sigma\left(V_2(s)\right)\right|^2\,ds \\
	&\le 2L^2 T \int_0^t \left( \left| X_1(s) - X_2(s)\right| + d_{1}\left(V_1(s), V_2(s)\right)\right)^2\,ds \\
	&\qquad + 2L^2 \left(\int_0^t \left|u(s)\right|^2\,ds\right)\int_0^T d_{1}\left(V_1(s), V_2(s)\right)^2\,ds \\
	&\le 4L^2 T \int_0^t \sup_{0\le r \le s} \left|X_1(r) - X_2(r)\right|^2\,ds \\
	&\qquad + 2L^2\left( 2T + \int_0^T |u(t)|^2\,dt\right) \int_0^t  E_\Xi \left[ \sup_{0\le r\le s} \left|X_1(r) - X_2(r)\right|^2\right] \,ds. 
\end{align*}
Then taking expectation with respect to $\Xi$, for all $0 \le t \le T$, 
\begin{align*}
	&E_\Xi\left[ \sup_{0\le s \le t} |X_1(s) - X_2(s)|^2\right] \\
	&\le 2L^2 \left(4T + E_\Xi\left[ \int_0^T |u(s)|^2\,ds \right]\right) \int_0^{t}  E_\Xi \left[ \sup_{0\le r\le s} \left|X_1(r) - X_2(r)\right|^2\right] \,ds.
\end{align*}
Gronwall's inequality now shows that 
$E_\Xi\left[\left\|X_1 - X_2\right\|_\infty^2\right] = 0$, 
which completes the proof. \hfill\qed

\appendix

\section{}
In this section we provide proofs of some Sobolev space results that are used in our work.
It will be convenient to introduce an alternate norm on $\mathbf{H}^{\mathbf{s}}$ equivalent to \eqref{eq:HstNorm}, and which is similar to norms used in \cite{BerButPis} and \cite{Orr}. 
Let $\{e_1, \ldots, e_d\}$ denote the canonical basis in $\RR^d$, recall that $U = (a, b)\supset [0,T]$, let $\sI \doteq \ZZ\times\R^d \times  \{1, \ldots , d\} $, and define the functions $e^k_{n, \xi} : U \times\R^d \to \R^d$ for $(n,\xi, k) \in \sI$ by 
\[
	e^k_{n,\xi}(t,x) = \frac{1}{b-a} e^{2\pi i nt/(b-a)} e^{2\pi i \xi \cdot x} e_k.
\]
Consider the Fourier coefficients of $\varphi \in \sC_c^\infty(U \times\R^d, \R^d)$  given by 
\begin{equation}\label{eq:hatphi}
	\hat \varphi(n,\xi) = \left(\hat \varphi_1(n,\xi), \ldots, \hat \varphi_d(n,\xi) \right), \qquad \hat \varphi_k(n,\xi) = \int_{U} \int_{\R^d} e^k_{-n,-\xi}(t,x) \cdot \varphi(t,x)\,dx\,dt. 
\end{equation}
Then an equivalent norm on $\mathbf{H}^{\mathbf{s}}$, $\mathbf{s} = (s_1, s_2) \in \R_+^2$, is given by 
\begin{equation}\label{eq:equivnorm}
	\|\varphi\|^2_{*,\mathbf{s}} = \sum_{n \in \Z} \int_{\R^d} \left| \hat\varphi(n,\xi) \right|^2\left(1+n^2\right)^{s_1}\left(1+|\xi|^2\right)^{s_2}\,d\xi. 
\end{equation}

\subsection{Proof of Lemma \ref{JbarReal}}
From the equivalence of the norms, it suffices to prove the statement in the lemma with $\|\cdot\|_{\mathbf{s}}$ replaced with
$\|\cdot\|_{*,\mathbf{s}}$. In what follows, we will abuse notation and denote $\|\cdot\|_{*,\mathbf{s}}$ 
once more as $\|\cdot\|_{\mathbf{s}}$.
Recall that for  $N\in \NN$, $1\le j \le N$, and $\varphi \in \sC_c^\infty(U\times\R^d, \R^d)$,
\[
	\bar J_j^N(\varphi) = \int_0^T \varphi\left(t, \bar X_j^N(t) \right)\circ d\bar X_j^N(t). 
\]
Any such $\varphi$ can be written in terms of its Fourier coefficients as 
\[
	\varphi(t,x) = \sum_{k=1}^d \sum_{n \in \Z} \int_{\R^d} \hat \varphi_k(n,\xi) e^k_{n,\xi}(t,x)\,d\xi.
\]
As in \cite[Lemma 8]{flagubgiator}  it follows that
\begin{align*}
	\bar J_j^N(\varphi) 
	&= \sum_{k=1}^d \sum_{n \in \Z} \int_{\R^d} \hat \varphi_k(n,\xi) Z^N_{j,k}(n,\xi)\,d\xi, 
\end{align*}
where 
\[
	Z^N_{j,k}(n,\xi) \doteq \int_0^T e^k_{n,\xi}\left(t, \bar X_j^N(t)\right) \circ d\bar X_j^N(t). 
\]
 Note that 
\begin{align*}
	Z^N_{j,k}(n,\xi) 
	&= \int_0^T e^k_{n,\xi}\left(t, \bar X_j^N(t) \right)\cdot d\bar X_j^N(t) + \frac{1}{2}\left\il e^k_{n,\xi}\left(\cdot, \bar X_j^N(\cdot)\right), \bar X_j^N(\cdot) \right\ir_T \\
	&= \int_0^T e^k_{n, \xi}\left(t, \bar X_j^N(t)\right)\cdot b\left(\bar X_j^N(t), \bar V^N(t)\right)\,dt + \int_0^T e^k_{n, \xi}\left(t, \bar X_j^N(t)\right)\cdot \sigma\left(\bar X_j^N(t), \bar V^N(t)\right)u_j^N(t)\,dt \\
	&\qquad +  \eps_N\int_0^T e^k_{n,\xi}\left(t, \bar X_j^N(t)\right)\cdot \sigma\left(\bar X_j^N(t), \bar V^N(t)\right)\,dW_j(t) \\
	&\qquad + \pi i \eps_N^2\xi_k \int_0^T \left(e^k_{n,\xi}\right)_k\left(t, \bar X_j^N(t)\right) \left(\sigma \sigma^{\mathsf{T}}\right)_{kk}\left(\bar X_j^N(t), \bar V^N(t) \right) \, dt, 
\end{align*}
since the $k$th component $(e^k_{n,\xi})_k$ is the only nonzero component of $e^k_{n,\xi}$. By the Cauchy-Schwarz inequality, for all $\varphi \in \sC_c^\infty(U \times\R^d, \R^d)$,
\begin{equation}\label{eq:eq336r}
	\left| \bar J_j^N(\varphi) \right|^2 \le \|\varphi\|^2_{\mathbf{s}}\sum_{k=1}^d \sum_{n \in \Z} \int_{\R^d} \frac{\left|Z^N_{j,k}(n,\xi) \right|^2}{\left(1+n^2\right)^{s_1}\left(1+|\xi|^2\right)^{s_2}}\,d\xi  
= \|\varphi\|^2_{\mathbf{s}}\left(C^N_{j, \mathbf{s}}\right)^2,\end{equation}
where
\[
	C^N_{j, \mathbf{s}} \doteq \left( \sum_{k=1}^d \sum_{n \in \Z} \int_{\R^d} \frac{\left|Z^N_{j,k}(n,\xi) \right|^2}{\left(1+n^2\right)^{s_1}\left(1+|\xi|^2\right)^{s_2}}\,d\xi \right)^{1/2}.
\]
Since $|e^k_{n,\xi}| \le T^{-1}$ and $|\sigma| \le L$, the Burkholder-Davis-Gundy inequality gives 
\begin{equation}\label{eq:eq511}
\begin{aligned}
	E\left[\left| Z^N_{j,k}(n, \xi) \right|^2\right] &\le 4 E\left[ \int_0^T \left| b\left(\bar X_j^N(t), \bar V^N(t)\right) \right|^2\,dt \right] +   \frac{4 L^2}{T} E\left[ \int_0^T \left| u_j^N(t) \right|^2\,dt \right]   + \frac{4\eps_N^2 L^2}{T} \\
	&\qquad + \frac{4\pi^2\eps_N^4L^4 \xi_k^2}{T}. 
\end{aligned}
\end{equation}
By the linear growth property of $b$ from Condition \ref{maincondition}, 
\[
	\left|b\left(\bar X_j^N(t), \bar V^N(t) \right) \right|^2 \le 3 L^2 \left( 1 + \left| \bar X_j^N(t) \right|^2 + \frac{1}{N} \sum_{l=1}^N \left| \bar X_l^N(t) \right|^2 \right), 
\]
and from Lemma \ref{L2bound}, 
$
	E\left[\|\bar X_j^N\|_\infty^2\right] <\infty
$
for each $N \in \N$ and $1\le j \le N$. 
Using the last two estimates and \eqref{eq:eq511}, we see that
$$\sup_{(n,\xi,k) \in \sI} E\left[\left| Z^N_{j,k}(n, \xi) \right|^2\right]  <\infty.$$
Thus, for each $N \in \NN$ and $1\le j \le N$,
$E[| C^N_{j, \mathbf{s}}|^2] <\infty$ for any $\mathbf{s} \in \sO_d$.
Following \cite{flagubgiator}, we now have from \eqref{eq:eq336r} the existence of a pathwise realization $\bar \sJ^N$ of
$\{\varphi \mapsto \bar J^N(\varphi)\}$ in $\mathbf{H}^{-\mathbf{s}}$ for every $N\in \NN$ and any $\mathbf{s} \in \sO_d$. This proves the first part of the lemma.

For the second part, note that
 by Lemma \ref{L2bound}, 
\begin{align*}
	E\left[ \left|b\left(\bar X_j^N(t), \bar V^N(t) \right) \right|^2 \right] &\le 4L^2(c+1) \left( 1 + \left|x_j^N\right|^2 + E\left[ \int_0^T \left|u_j^N(t)\right|^2\,dt \right] + \frac{1}{N} \sum_{l=1}^N \left|x_l^N\right|^2 \right. \\
	&\qquad +\left.  E\left[ \frac{1}{N}\sum_{l=1}^N \int_0^T \left|u_l^N(t)\right|^2\,dt \right] \right). 
\end{align*}
Thus for some constant $K < \infty$ depending only on $d$, $T$, and $L$,
\[	
	\frac{1}{N}\sum_{j=1}^N \sum_{k=1}^d E\left[\left| Z^N_{j,k}(n,\xi) \right|^2\right] \le K\left( 1 + |\xi|^2 + \frac{1}{N}\sum_{j=1}^N \left|x_j^N\right|^2 + E\left[\frac{1}{N}\sum_{j=1}^N \int_0^T \left|u_j^N(t)\right|^2\,dt\right] \right). 
\]
Letting $C^N_{\mathbf{s}} = \frac{1}{N} \sum_{j=1}^N C^N_{j, \mathbf{s}}$, we have from \eqref{eq:eq336r} that, for all $\varphi \in \sC_c^\infty(U \times\R^d, \R^d)$,
\[
	\left| \bar J^N(\varphi) \right| \le \frac{1}{N}\sum_{j=1}^N \left|\bar J_j^N(\varphi) \right| \le C^N_{\mathbf{s}} \|\varphi\|_{\mathbf{s}}. 
\]
Finally,
\begin{multline*}
	E\left[ \left(C^N_{\mathbf{s}}\right)^2\right] \le \frac{1}{N}\sum_{j=1}^N E\left[ \left(C^N_{j,\mathbf{s}}\right)^2\right] \\
	\le \sum_{n\in \Z}\int_{\R^d} \frac{K}{\left(1+n^2\right)^{s_1}\left(1+|\xi|^2\right)^{s_2}} \left( 1 + |\xi|^2 + \sup_{N\ge1} \frac{1}{N}\sum_{j=1}^N \left|x_j^N\right|^2 + \sup_{N\ge 1} E\left[ \frac{1}{N} \sum_{j=1}^N \int_0^T \left|u_j^N(t) \right|^2\,dt\right] \right)\,d\xi, 
\end{multline*}
which is finite by Condition \ref{initialcondition} and \eqref{eq:eq525} since $\mathbf{s} \in \sO_d$. \hfill \qed

\subsection{Proof of Lemma \ref{Gpathwisereal}}
As in the proof of Lemma \ref{JbarReal}, it suffices to prove the statement in the lemma with $\|\cdot\|_{\mathbf{s}}$ replaced with
$\|\cdot\|_{*,\mathbf{s}}$, and once again, abusing notation, we will denote $\|\cdot\|_{*,\mathbf{s}}$ 
 as $\|\cdot\|_{\mathbf{s}}$.
Suppose that $\mathbf{s} \in \sO_d$ and $(\mu, \sJ) \in \sP_1(\sX)\times \mathbf{H}^{-\mathbf{s}}$ 
are such that
 $I(\mu, \sJ) < \infty$. Then there is some $\Theta \in \sP^*(\sJ)$ such that $\Theta_{(1)} = \mu$ and 
\[
	\il \sJ, \varphi \ir = G_\varphi(\Theta) = E_\Theta\left[ \int_0^T \varphi(t, X(t)) \cdot dX(t) \right], 
\]
for all $\varphi \in \sC_c^\infty(U\times\R^d, \R^d)$. Furthermore, the estimate \eqref{eq:XL2bound} holds for this $\Theta$. 
By an argument as in the proof of Lemma \ref{JbarReal},
\[
	 \int_0^T \varphi(t, X(t)) \cdot dX(t) = \sum_{k=1}^d \sum_{n \in \Z} \int_{\R^d} \hat \varphi_k(n,\xi)Z_k(n,\xi)\,d\xi \qquad \Theta\mbox{-a.s.}, 
\]
where $\hat \varphi_k$ is defined in \eqref{eq:hatphi} and 
\begin{align*}
	Z_k(n,\xi) 
	&\doteq \int_0^T e^k_{n,\xi}\left(t, X(t) \right)\cdot dX(t) \\
	&= \int_0^T e^k_{n,\xi}\left(t, X(t) \right)\cdot  b\left(X(t), \nu_\Theta(t) \right)\,dt + \int_{[0,T]\times\R^m} e^k_{n,\xi}\left(t, X(t) \right) \cdot \sigma\left(X(t), \nu_\Theta(t)\right)y\,\rho(dt, dy)
\end{align*}
$\Theta$-a.s. Since $|e^k_{n, \xi}| \le T^{-1}$, using \eqref{eq:bThetagrowth} we have
\begin{align*}
	\left|Z_k(n, \xi) \right|^2 &\le \frac{6 L^2}{T} \int_0^T \left( 1 + |X(t)|^2 + E_\Theta\left[|X(t)|^2 \right] \right)\,dt + \frac{2L^2}{T} \int_{[0,T]\times\R^m} |y|^2\,\rho(dt, dy), 
\end{align*}
and then the bound in \eqref{eq:XL2bound} gives 
\[
	\sup_{(n,\xi, k)\in \sI} E_\Theta\left[ \left|Z_k(n,\xi)\right|^2\right] \le c'\left(1 + \int_{\R^d}|x|^2\,\mu_0(dx) + E_\Theta\left[\int_{[0,T]\times \R^m} |y|^2\,\rho(dt,dy)\right] \right), 
\]
for some $c' < \infty$. 
Thus by the Cauchy-Schwarz inequality, for any $\mathbf{s}' = (s'_1, s'_2) \in \sO_d$
and $\varphi \in \sC_c^\infty(U\times\R^d, \R^d)$,
\begin{align*}
	&\left|\il \sJ, \varphi \ir \right|^2 \le E_{\Theta} \left[ \left| \int_0^T \varphi(t, X(t)) \cdot dX(t) \right|^2\right] \\
	&\le E_\Theta\left[ \sum_{k=1}^d \sum_{n\in \Z} \int_{\R^d} \frac{\left|Z_k(n,\xi)\right|^2}{\left(1+n^2\right)^{s'_1}\left(1+|\xi|^2\right)^{s'_2}}\,d\xi \right] \|\varphi\|_{\mathbf{s}'}^2 \\
	&\le c' \sum_{n \in \Z} \int_{\R^d} \frac{d\xi}{\left(1+n^2\right)^{s'_1}\left(1+|\xi|^2\right)^{s'_2}} \left(1 + \int_{\R^d}|x|^2\,\mu_0(dx) + E_\Theta\left[\int_{[0,T]\times\R^m} |y|^2\,\rho(dt,dy)\right] \right)  \|\varphi\|_{\mathbf{s}'}^2 \\
	&\le C_{\mathbf{s}'}^2 \left( 1 + E_\Theta\left[ \int_{[0,T]\times\R^m} |y|^2 \,\rho(dt,dy) \right] \right) \|\varphi\|_{\mathbf{s}'}^2
\end{align*}
where 
\[
	C_{\mathbf{s}'}^2 \doteq c' \left( 1 + \int_{\R^d} |x|^2\,\mu_0(dx) \right) \sum_{n \in \Z} \int_{\R^d} \frac{d\xi}{\left(1+n^2\right)^{s'_1}\left(1+|\xi|^2\right)^{s'_2}} < \infty,
\]
since  $\mathbf{s}' = (s'_1, s'_2) \in \sO_d$. The result follows. \hfill \qed

\subsection{Proof of Lemma \ref{gMnormbound}}
We will only consider the case where $s$ is not an integer, the proof for the case when $s$ is an integer is a simpler version of the proof given below. An equivalent norm to $\|\cdot\|_s$  in \eqref{eq:FourierSobolevnorm} can be given as follows (see \cite[page 527]{NezPalVal}): write $s = k + r$ where $k \in \N$ and $r \in (0,1)$. Then, for $h \in H^s(\R^d,  \R^d)$, define
\[
	\|h\|^2_{*,s} \doteq \|h\|^2_{k} + \sum_{|\alpha|=k}\|D^\alpha  h\|^2_{r}, 
\]
where $\|\cdot\|_k$ is the usual integer Sobolev norm 
\[
	\|h\|_k^2 = \sum_{0\le |\alpha| \le k} \|D^\alpha h\|^2_{L^2}, 
\]
and $\|\cdot\|_{r}$ is the fractional Gagliardo-type Sobolev norm 
\begin{equation}\label{eq:GagliardoNorm}
	\|h\|^2_r = \|h\|^2_{L^2} + [h]^2_r = \int_{\R^d} |h(x)|^2\,dx + \int_{\R^d} \int_{\R^d} \frac{|h(x) - h(y)|^2}{|x-y|^{d+2r}}\,dx\,dy. 
\end{equation}
The norm $\|\cdot\|_{*,s} $ is equivalent to the norm $\|\cdot\|_s$  in \eqref{eq:FourierSobolevnorm} and
thus it suffices to prove Lemma \ref{gMnormbound} with  $\|\cdot\|_{s} $ replaced with
$\|\cdot\|_{*,s} $. Henceforth, abusing notation, we will denote this new norm once more as $\|\cdot\|_s$.
Now let $f$ and $g_M$ be as in the statement of the lemma.
With $B(k)$ as in Definition \ref{mollifiers}(iii), 
the Leibniz product formula gives, for a multi-index $\alpha$ with $|\alpha|\le k$,
\[
	\left| D^\alpha \left(g_M(x) f(x)\right) \right| = \left| \sum_{\beta \le \alpha} {\alpha \choose \beta} D^{\alpha - \beta} g_M(x) D^\beta f(x) \right| \le B(k) \sum_{\beta \le \alpha} {\alpha \choose \beta} \left| D^\beta f(x) \right|, 
\]
and hence for all $M<\infty$ 
\begin{align}
	\|g_Mf\|^2_{k} &= \sum_{0 \le |\alpha| \le k} \int_{\R^d} \left| D^\alpha g_M(x) f(x) \right|^2\,dx \le c_1 \sum_{0\le |\beta| \le k}  \int_{\R^d} \left|D^\beta f(x) \right|^2\,dx = c_1 \|f\|^2_{k}, \label{eq:eq346r}
\end{align}
for some $c_1 = c_1(k) < \infty$. For the $r$ term we follow the proof of  \cite[Lemma 5.3]{NezPalVal}. If $\psi \in \sC_c^\infty(\R^d, \R)$ is such that $0 \le \psi \le B_\psi < \infty$ and $h \in H^{r}(\R^d, \R^d)$ for some $0<r<1$, then $\|\psi h\|^2_{L^2} \le B_\psi^2\|h\|^2_{L^2}$. If $L_\psi$ denotes the Lipschitz constant of $\psi$, then
\begin{align*}
	[\psi h]_{r}^2 
	&= \int_{\R^d} \int_{\R^d} \frac{|\psi(x)h(x) - \psi(y)h(y)|^2}{|x-y|^{d+2r}}\,dx\,dy \\
	&\le 2 \int_{\R^d} \int_{\R^d} \frac{|\psi(x)h(x) - \psi(x)h(y)|^2}{|x-y|^{d+2r}}\,dx\,dy + 2 \int_{\R^d} \int_{\R^d} \frac{|\psi(x)h(y) - \psi(y)h(y)|^2}{|x-y|^{d+2r}}\,dx\,dy \\
	&\le 2B_\psi^2 \int_{\R^d} \int_{\R^d} \frac{|h(x) - h(y)|^2}{|x-y|^{d+2r}}\,dx\,dy + 2\int_{\R^d} \int_{\R^d} \frac{|\psi(x) - \psi(y)|^2|h(y)|^2}{|x-y|^{d+2r}}\,dx\,dy \\
	&\le 2B_\psi^2[h]^2_{r} + 2L_\psi^2 \int_{\R^d} \int_{\left\{|x - y| \le 1\right\}} \frac{|h(y)|^2}{|x - y|^{d + 2(r-1)}} \,dx \,dy + 8B^2_{\psi}\int_{\R^d} \int_{\left\{|x-y| > 1\right\}} \frac{|h(y)|^2}{|x-y|^{d+2r}}\,dx\,dy \\
	&\le 2B_\psi^2[h]^2_{r} + 2\left(L_\psi^2+4B^2_{\psi}\right) c_2 \|h\|^2_{L^2}, 
\end{align*}
for $c_2 = c_2(r) < \infty$. In the last line, we used the fact that for some $c_3, c_4 < \infty$ depending on $r$, 
\[
	\int_{\R^d} \int_{\left\{|x - y| \le 1\right\}} \frac{|h(y)|^2}{|x - y|^{d + 2(r-1)}} \,dx \,dy \le \int_{\R^d} \left( \int_{\{|z| \le 1\}} \frac{1}{|z|^{d+2(r-1)}} \,dz \right) |h(y)|^2\,dy \le c_3 \|h\|^2_{L^2},
\]
since $d + 2(r-1) < d$, and
\[
	\int_{\R^d} \int_{\left\{|x - y| > 1\right\}} \frac{|h(y)|^2}{|x - y|^{d + 2r}} \,dx \,dy \le \int_{\R^d} \left( \int_{\{|z| > 1\}} \frac{1}{|z|^{d+2r}} \,dz \right) |h(y)|^2\,dy \le c_4 \|h\|^2_{L^2},
\]
since $d + 2r > d$. 
Thus we have that
\[
	\|\psi h\|^2_{r} \le 8\left(B_\psi^2+L_\psi^2\right)(c_2+1) \|h\|^2_{r}. 
\]
Then, with $B(k)$ as in Definition \ref{mollifiers} and $L(k)$ as in \eqref{eq:eq928}, we obtain that for $|\alpha|=k$,
\begin{align*}
	\left\|D^\alpha g_M f\right\|^2_{r} 
	&= \left\| \sum_{\beta \le \alpha} {\alpha \choose \beta} D^{\alpha - \beta} g_M D^\beta f\right\|^2_{r} \\
	&\le 2^{\alpha!} \sum_{\beta \le \alpha} {\alpha \choose \beta}^2 \left\| D^{\alpha - \beta}g_M D^\beta f\right\|^2_{r} \\
	&\le 2^{\alpha !} 8\left(B(k)^2+L(k)^2\right)(c_2+1) \sum_{\beta \le \alpha} {\alpha \choose \beta}^2 \left\|D^\beta f\right\|^2_{r}. 
\end{align*}
Next, for $|\beta| < k$ and some constant $c_5 = c_5(r) < \infty$, we have that 
\[
	\left\| D^\beta f \right\|^2_{r} \le c_5 \left\|D^\beta f \right\|^2_{1} = c_5\left\|D^\beta f\right\|^2_{L^2} + c_5\sum_{|\alpha| = 1} \left\|D^\alpha D^\beta f\right\|^2_{L^2} \le c_5\left\|D^\beta f \right\|^2_{L^2} + c_5\sum_{|\alpha| = |\beta|+1} \left\|D^\alpha f \right\|^2_{L^2}, 
\]
and hence 
for some $c_6 = c_6(k, r) < \infty$ and all $M<\infty$, 
\begin{align}
	\sum_{|\alpha| = k}\left\|D^\alpha g_M f\right\|^2_{r} 
	&\le  c_6\sum_{|\alpha|=k} \left\|D^\alpha f\right\|^2_{r} + c_6 \|f\|^2_{k}. \label{eq:eq348r}
\end{align}
Finally, from \eqref{eq:eq346r} and \eqref{eq:eq348r}, for all $M<\infty$,
\[
	\|g_Mf\|^2_{s} = \|g_Mf\|^2_{k} + \sum_{|\alpha|=k}\|D^\alpha g_M f\|^2_{r} \le (c_1 + c_6) \|f\|^2_{k} +c_6 \sum_{|\alpha| = k} \left\|D^\alpha f \right\|^2_{r} \le K \|f\|^2_{s}, 
\]
where $K = c_1 + c_6$. \hfill \qed

\subsection{Proof of Lemma \ref{compactembedding}}
\label{sec:pfcptemb}
Let $\mathbf{s}$, $\mathbf{s}'$, $A$ and $K$ be as in the statement of the lemma. In particular
$A \subset \mathbf{H}^{-\mathbf{s}'}$ is such that 
\begin{equation}\label{eq:eq513}
	B \doteq \sup_{F\in A} \|F\|_{-\mathbf{s}'} <\infty, 
\end{equation}
and every $F \in A$ has support contained in $K$. Recall the functions $e^k_{n, \xi}$ for $(n,\xi,k)\in \sI$ introduced above \eqref{eq:hatphi}.
Let $\{F^l\}_{l\in \N}$ be a sequence in $A$, and for $l \in \NN$ and $(n, \xi) \in \Z\times \R^d$, let
\begin{equation}\label{eq:distFourier}
	\hat F^l(n, \xi) \doteq \left( \hat F_1^l(n,\xi), \ldots, \hat F_d^l(n,\xi)\right), \qquad 
	\hat F^l_k(n,\xi) \doteq \left\il F^l, e^k_{-n, -\xi} \right\ir, \qquad  1\le k \le d. 
\end{equation}
Since $F^l$ has compact support, the evaluation on the right side of the second equality above  is indeed meaningful (see e.g. \cite[Theorem 9.8]{folland}) and for each $l\in \NN$ and $n \in \ZZ$, $\xi \mapsto \hat F^l(n, \xi)$ is in $\sC^{\infty}(\R^d, \R^d)$. Also, using \eqref{eq:eq513}  and the compact support property, one can verify (see \cite[Theorem 9.22]{folland}) that for each $n\in \ZZ$,
\[
	\sup_{l\ge1} \sup_{\xi \in \R^d} \left| \hat F^{l}(n, \xi) \right| < \infty \qquad\mbox{and}\qquad \sup_{l\ge1} \sup_{\xi \in \R^d} \left| D_{\xi} \hat F^l(n,\xi) \right| < \infty.
\]
Thus,
for each $n\in \ZZ$, $\{\hat F^l(n, \cdot), l \in \N\}$ is relatively compact in $\sC(\R^d, \R^d)$. By a standard diagonalization procedure, we can pick a subsequence $\{l_j\}$ such that $\{\hat F^{l_j}(n, \cdot), j \in \N\}$ converges in $\sC(\R^d, \R^d)$ for every $n$ to a limit. We will now show that $F^{l_j}$ is Cauchy in $\mathbf{H}^{-\mathbf{s}}$ which will complete the proof. 
 
 By an argument similar to \cite[Proposition 9.16]{folland}, there are constants $c_1(\mathbf{t},K), c_2(\mathbf{t}, K) < \infty$ for $\mathbf{t}= \mathbf{s}, \mathbf{s}'$ such that for any $F \in \mathbf{H}^{-\mathbf{s}'} \subset \mathbf{H}^{-\mathbf{s}}$  supported on the  compact set $K$ and both $\mathbf{t} = (t_1, t_2) = \mathbf{s}, \mathbf{s}'$, 
 \begin{equation}\label{ea:eq529}
 c_1(\mathbf{t},K)\left\|F\right\|^2_{-\mathbf{t}} 	\le \sum_{n\in \Z} \int_{\R^d} \left| \hat F(n,\xi) \right|^2 \left(1+n^2\right)^{-t_1}\left(1+|\xi|^2\right)^{-t_2}\,d\xi \le  c_2(\mathbf{t},K)\left\|F\right\|^2_{-\mathbf{t}},
 \end{equation}
 where $\hat F(n, \xi)$ is defined as in \eqref{eq:distFourier}.
In particular, for $j,m \in \N$,
 $$
 	c_1(\mathbf{s}, K)\left\|F^{l_j} - F^{l_m} \right\|^2_{-\mathbf{s}} \le  \sum_{n\in \Z} \int_{\R^d} \left| \hat F^{l_j}(n, \xi) - \hat F^{l_m}(n,\xi) \right|^2 \left(1+n^2\right)^{-s_1}\left(1+|\xi|^2\right)^{-s_2}\,d\xi . 
$$
 Fix $M \in \N$. Then,  using $(1+|\xi|^2)^{-s_2} \le (1+|\xi|^2)^{-s'_2}$, we have
\begin{align*}
c_1(\mathbf{s},K)	\left\|F^{l_j} - F^{l_m} \right\|^2_{-\mathbf{s}} &\le  \sum_{-M \le n \le M} \int_{\R^d} \left| \hat F^{l_j}(n, \xi) - \hat F^{l_m}(n,\xi) \right|^2 \left(1+n^2\right)^{-s_1}(1+|\xi|^2)^{-s_2}\,d\xi \\
	&\qquad + \sum_{|n| > M} \int_{\R^d} \left| \hat F^{l_j}(n, \xi) - \hat F^{l_m}(n,\xi) \right|^2 \left(1+n^2\right)^{-s_1}\left(1+|\xi|^2\right)^{-s_2}\,d\xi \\
	&\le \sum_{-M \le n \le M} \int_{\R^d} \left| \hat F^{l_j}(n, \xi) - \hat F^{l_m}(n,\xi) \right|^2 \left(1+n^2\right)^{-s_1}\left(1+|\xi|^2\right)^{-s_2}\,d\xi \\
	&\qquad + c_2(\mathbf{s}',K)\left\|F^{l_j} - F^{l_m} \right\|^2_{\mathbf{s}'}  \frac{1}{\left(1+(M+1)^2\right)^{s_1-s_1'}} \\
	&\le \sum_{-M \le n \le M} \int_{\R^d} \left| \hat F^{l_j}(n, \xi) - \hat F^{l_m}(n,\xi) \right|^2 \left(1+n^2\right)^{-s_1}\left(1+|\xi|^2\right)^{-s_2}\,d\xi \\ 
	&\qquad + \frac{4B^2c_2(\mathbf{s}',K)}{\left(1 + (M+1)^2\right)^{s_1-s_1'}}. 
\end{align*}
Next, for each $|n| \le M$ and $R < \infty$, there is a $C(R)<\infty$ such that
\begin{align*}
	&\int_{\R^d} \left| \hat F^{l_j}(n, \xi) - \hat F^{l_m}(n,\xi) \right|^2 \left(1+n^2\right)^{-s_1}\left(1+|\xi|^2\right)^{-s_2}\,d\xi \\
	&= \int_{\left\{|\xi| \le R\right\}} \left| \hat F^{l_j}(n, \xi) - \hat F^{l_m}(n,\xi) \right|^2 \left(1+n^2\right)^{-s_1}\left(1+|\xi|^2\right)^{-s_2}\,d\xi \\
	&\qquad + \int_{\left\{|\xi| > R\right\}} \left| \hat F^{l_j}(n, \xi) - \hat F^{l_m}(n,\xi) \right|^2 \left(1+n^2\right)^{-s_1}\left(1+|\xi|^2\right)^{-s_2}\,d\xi \\
	&\le 
	C(R)\sup_{|\xi| \le R}  \left| \hat F^{l_j}(n,\xi) - \hat F^{l_m}(n,\xi) \right|^2  + \frac{c_2(\mathbf{s}',K)}{\left(1+R^2\right)^{s_2 - s_2'}} \left\|F^{l_j} - F^{l_m} \right\|^2_{\mathbf{s}'} \\
	&\le C(R) \sup_{|\xi| \le R}  \left| \hat F^{l_j}(n,\xi) - \hat F^{l_m}(n,\xi) \right|^2  +  \frac{4B^2c_2(\mathbf{s}',K)}{\left(1+R^2\right)^{s_2 - s_2'}}. 
\end{align*}
Combining the above estimates and sending $j,m \to \infty$, since $\{\hat F^{l_j}(n, \cdot)\}$ converges for every $n$, we get 
\[
	\limsup_{j,m \to \infty} \left\|F^{l_j} - F^{l_m} \right\|^2_{\mathbf{s}} \le \frac{4B^2(2M+1)c_2(\mathbf{s}',K)}{c_1(\mathbf{s},K)\left(1+R^2\right)^{s_2 - s_2'}} +  \frac{4B^2c_2(\mathbf{s}',K)}{c_1(\mathbf{s},K)\left(1 + (M+1)^2\right)^{s_1-s_1'}}. 
\]
The result now follows on first sending $R \to \infty$ and then $M \to \infty$. \hfill \qed

\end{document}